\documentclass[11pt]{amsart}

\usepackage{mystyle}

\title[Formal cornered asymptotically hyperbolic Einstein metrics]{Formal theory of cornered asymptotically hyperbolic Einstein metrics}
\author{Stephen E. McKeown}
\address{Department of Mathematics, Princeton University, Princeton, NJ 08544, USA}
\email{smckeown@math.princeton.edu}
\thanks{Research partially supported by NSF RTG Grants DMS-0838212 and DMS-1502525 and Grant DMS-1161283}
\subjclass[2010]{Primary 53B20, 58J37, Secondary 53C25, 35B40}

\begin{document}

\begin{abstract}
This paper makes a formal study of asymptotically hyperbolic Einstein metrics given, as conformal infinity, a conformal manifold with boundary.
The space on which such an Einstein metric exists thus has a finite boundary in addition to the usual infinite boundary and a corner where the two meet. On the finite boundary a constant mean curvature umbilic condition is imposed.
First, recent work of Nozaki, Takayanagi, and Ugajin is generalized and extended showing that such metrics cannot have smooth compactifications for generic corners embedded in
the infinite boundary. A model linear problem is then studied: a formal expansion at the corner is derived for eigenfunctions of the scalar Laplacian subject to certain boundary conditions.
In doing so, scalar ODEs are studied that are of relevance for a broader class of boundary value problems and also for
the Einstein problem. Next, unique formal existence at the corner, up to order at least equal to the boundary dimension, of Einstein metrics in a cornered asymptotically hyperbolic normal form which are polyhomogeneous in
polar coordinates is demonstrated for arbitrary smooth conformal infinity. Finally it is shown that, in the special case that the finite boundary is taken to be totally geodesic, there
is an obstruction to existence beyond this order, which defines a conformal hypersurface invariant.
\end{abstract}

\maketitle

\section{Introduction}

This paper studies the formal existence and expansion of asymptotically hyperbolic (AH) Einstein metrics on manifolds with corner.
The setting of interest to us is manifolds with two codimension-one boundary faces, one of which is the conformal
infinity for an AH metric, and the other of which is an ordinary embedded hypersurface at which, away from the corner, the metric is regular.
Following our earlier study \cite{m16}, we will refer to these spaces as
cornered asymptotically hyperbolic (CAH) spaces.

One of the seminal problems in the theory of AH spaces, resolved in \cite{fg85,fg12}, 
was the formal existence of Einstein metrics given a particular conformal infinity $(M^n,[h])$.
This study had a significant impact on conformal geometry, and the formal expansion
developed in those works has found myriad applications from geometric analysis to the AdS/CFT conjecture
of physics. We will consider the same question of formal existence, but will take $M$ to be a manifold with boundary.
This necessitates equipping the Einstein space with a finite boundary, so that the boundary $S$ of $M$ becomes a corner,
and we have a boundary condition on the new finite boundary as well as at $M$. For reasons discussed below, we select the
CMC umbilic condition on the finite boundary. We will then consider formal existence of a CAH Einstein metric
at the boundary $S$ of $M$, viewed as the corner of the $(n+1)$-space.

AH Einstein spaces with corners have been considered in at least two previous settings. Local regularity at the boundary of AH
Einstein metrics was studied in \cite{bh14}, and studying a neighborhood of a point on the boundary necessarily introduces a
corner where the inner boundary of the neighborhood meets the boundary at infinity. The authors therefore developed
doubly weighted function spaces to analyze regularity. Since they were interested primarily in behavior away
from the corner, however, they successfully ``washed out'' the behavior of the metric at the corner itself,
and thus said relatively little about the metric there.

The paper \cite{ntu12} reflects an interest in this setting from physicists who wish to study the AdS/CFT correspondence when
the conformal field theory is on a space with boundary; see the discussion and references given there.
In the first part of that paper, the authors
considered a conformally compact manifold $X$ whose infinite boundary, $(M^n,[h])$, was a piece of $\mathbb{R}^2$ or
$\mathbb{R}^3$ with smooth boundary $S$ and endowed with the conformal class of the flat metric. They then added
a finite boundary $Q$ of $X$, intersecting $M$ precisely at $S$, imposing the boundary condition that $Q$ was umbilic with
constant mean curvature (CMC umbilic) 
with respect to the hyperbolic metric $g_+$ on the upper half-space. They concluded that, for $n = 2$, the boundary $S$ was
unrestricted, but that for $n = 3$ (or, they posited, larger), $S$ must be a sphere
or a plane. They therefore, in the latter part of the paper, considered a particular family of
perturbations of the 4-dimensional hyperbolic metric at the corner. They found that 
the Einstein equations could be solved to first order in the (compactified) distance to the corner for arbitrary $S$.

We will adopt the same CMC umbilic boundary conditions, which are geometrically natural and require a minimum of data
beyond the conformal infinity: only a single scalar, the mean curvature. We first, in section \ref{smoothsec}, undertake a study
of the case where the metric $g_+$ has a smooth compactification. We observe that in the case of hyperbolic space, the fact observed
in \cite{ntu12} for $n \geq 3$ -- that the corner $S$ must be a sphere or a hyperplane -- follows in all dimensions $n \geq 2$
from the classical theorem characterizing umbilic hypersurfaces in hyperbolic space. We then obtain a series of conditions on $S$
for smooth CAH Einstein metrics with arbitrary conformal infinity by repeatedly differentiating the condition of umbilicity and applying
the Einstein condition. We also explain the absence of the restriction on $S$ for the case $n = 2$ in \cite{ntu12}:
the restriction appears at one higher order in the expansion for a two-dimensional conformal infinity than for any higher dimension
(and in particular, at one higher order than was considered in that paper).
Also in this section, we show that the CMC umbilic boundary condition at $Q$ implies that $Q$ and $M$ make constant angle with respect
to any compactification of $g_+$.

Having confirmed that the requirement of smoothness of $g_+$ at the corner generically imposes severe restrictions on $S$ in $(M,[h])$, 
we turn to a general theory of formal existence.
As a first step, we must find an appropriate weakening of the smoothness condition. The most obvious condition, and essentially that which we impose,
is instead to require smoothness in polar coordinates at the corner, a condition which has a long history in problems with such singularities (see \cite{kon67,maz91}).
Invariantly, this means blowing up the corner in the sense of Melrose (throughout this paragraph, see
Section \ref{backsec} for details), obtaining a blown-up space $\tX$ and a blowdown map $b:\tX \to X$. We define
$\tM = b^{-1}(M)$ and $\tQ = b^{-1}(Q)$, to each of which $b$ restricts as a diffeomorphism; and $\tS = b^{-1}(S)$, to which
$b$ restricts as a fibration with fibers diffeomorphic to $[0,1]$. We note that $b$ induces a natural edge structure over both $M$ and $S$ in the sense
of \cite{maz91}, the former
with trivial fibers (a $0$-structure) and the latter with interval fibers.
In our prior paper, \cite{m16}, we considered a class of CAH metrics intermediate between those smooth on $X$ and those smooth on $\tX$, a restricted
subclass of the latter which we called \emph{admissible metrics} on $\tX$. In that paper, we proved a normal-form theorem, stated later here as
Theorem \ref{normformthm}. In brief, if $M$ and $Q$ make a constant angle $\theta_0$ with respect to compactified metrics and
$g$ is an admissible metric, then there is a diffeomorphism $\zeta: [0,\theta_0]_{\theta} \times W \hookrightarrow \tX$ (where
$W$ is a neighborhood of $S$ in $M$) such that
\begin{equation}
  \label{polarform}
  \zeta^*g = \frac{d\theta^2 + h_{\theta}}{\sin^2(\theta)},
\end{equation}
where $h_{\theta}$ is a smooth one-parameter family of smooth asymptotically hyperbolic metrics on $(W,S)$, and $\zeta$
is unique subject to some technical conditions. This generalizes the ordinary hyperbolic metric $g_+ = \frac{dy^2 + |dx|^2}{y^2}$
on the upper half-space $\mathbb{H}^{n + 1}$, which under the change of variables $\psi$ given by $y = \rho \sin \theta$ and $x^n = \rho \cos\theta$
takes the form
\begin{equation*}
  \psi^*g_+ = \frac{1}{\sin^2(\theta)}\left[ d\theta^2 + \frac{d\rho^2 + (dx^1)^2 + \cdots + (dx^{n - 1})^2}{\rho^2} \right].
\end{equation*}

Now, as mentioned before, the umbilic boundary condition $K_Q = \lambda g_+|_{TQ}$ (where $K_Q$ is the scalar second fundamental form) implies that $Q$ and $M$ do make constant angle
$\theta_0 = \cos^{-1}(-\lambda)$, a fact that remains true if $g_+$ is just admissible. In light of this fact and the preceding theorem,
it is natural to break the gauge of the Einstein equations by looking for an Einstein metric $g$ on the blowup in the form (\ref{polarform}).
In this gauge, the CMC umbilic condition at $Q$ takes the form $\partial_{\theta}h_{\theta}|_{\theta = \theta_0} = 0$, as shown in Lemma
\ref{klem}.
Let $M$ be a manifold with boundary $S$. Let $\rho$ be a defining function for $S$ in $M$ (throughout, we will use the same notation for the lift of $\rho$ to $[0,\theta_0] \times M$).
We define $\mathcal{M}(\theta_0,M)$ to be the set of families
$h_{\theta} (0 \leq \theta \leq \theta_0)$ of smooth AH metrics on $M$ such that
$\bar{h}_{\theta} = \rho^2h_{\theta}$ is smooth in $\theta$ (in case $n = 2$ or $n$ is odd) or such that it is smooth in $\theta$
and $\theta^n\log(\theta)$ (if $n \geq 4$ is even). See page \pageref{qdef} for full details. The following is the main theorem of
this paper. In the statement, $T = O_g(f)$ for $T$ a tensor field means $|T|_g = O(f)$. Here and throughout the paper, we take
$n \geq 2$.

\begin{theorem}
  \label{einthm}
  Let $M^n$ be a manifold with boundary, let $\lambda \in (-1,1)$, let $\rho$ be a defining function for $S = \partial M$ in $M$,
  and let $[h]$ be a conformal class on $M$. Set $\theta_0 = \cos^{-1}(-\lambda)$.
  Then there exists a one-parameter family $h_{\theta} \in \mathcal{M}(\theta_0,M)$ of smooth AH metrics on $M$, such that if $g$ is the normal-form
  metric
  \begin{equation*}
    g = \csc^2(\theta)[d\theta^2 + h_{\theta}]
  \end{equation*}
  on $\tX = [0,\theta_0]_{\theta} \times M$, then
  \begin{enumerate}
    \item $h_0 \in [h]$;\label{einfirst}
    \item $\partial_{\theta}\bar{h}_{\theta}|_{\rho = 0} = 0$, where $\bar{h}_{\theta} = \rho^2h_{\theta}$;\label{constrho0}
    \item the second fundamental form $K_{\tQ}$ of $\tQ \setminus \tS = \left\{ \theta_0 \right\} \times (M \setminus S)$ satisfies
      $K_{\tQ} = \lambda g|_{T\tQ}$; and\label{einsecondlast}
    \item the formal Einstein condition
      \begin{equation*}
        \ric(g) + ng = O_g(\rho^n)
      \end{equation*}
      is satisfied.\label{einlast}
  \end{enumerate}

  If $\theta_0$ is such that Equation (\ref{hyperchareq}) has no integral solutions $\nu$ when $s = n$, then in fact we may choose $h_{\theta}$ so that
  \begin{equation*}
    \ric(g) + ng = O_g(\rho^{\infty}).
  \end{equation*}

  In all cases, if $h_{\theta}, h_{\theta}'$ are two families satisfying the above conditions, then
  $\bar{h}_{\theta} - \bar{h}_{\theta}' = O(\rho^n)$. If Equation (\ref{hyperchareq}) has no integral solutions, as above, then two infinite-order solutions
  satisfying $\bar{h}_{\theta} - \bar{h}_{\theta}' = o(\rho^{n})$ satisfy $\bar{h}_{\theta} - \bar{h}_{\theta}' = O(\rho^{\infty})$.
\end{theorem}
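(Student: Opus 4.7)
The plan is to construct the family $h_\theta$ by a formal power series expansion at the corner $\rho = 0$ and to solve the Einstein equation order by order in $\rho$. By Theorem \ref{normformthm} it suffices to work in the polar normal form
\begin{equation*}
  g = \csc^2(\theta)\bigl(d\theta^2 + h_\theta\bigr),
\end{equation*}
and by Lemma \ref{klem} the CMC umbilic condition \eqref{einsecondlast} on $\tQ$ is equivalent to $\partial_\theta \bar{h}_\theta|_{\theta = \theta_0} = 0$, where $\bar{h}_\theta = \rho^2 h_\theta$. I would then postulate the expansion
\begin{equation*}
  \bar{h}_\theta(x,\rho) \sim \sum_{k \geq 0} \bar{h}_\theta^{(k)}(x)\,\rho^k
\end{equation*}
(augmented by $\rho^n\log\rho$ terms beginning at order $n$ when the indicial situation demands it), with each $\bar{h}_\theta^{(k)}$ a symmetric $2$-tensor on $S$ depending smoothly on $\theta \in [0,\theta_0]$. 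Condition \eqref{constrho0} forces $\bar{h}_\theta^{(0)}$ to be independent of $\theta$ (the chosen conformal representative of $[h]|_S$), and condition \eqref{einfirst} requires the values $\bar{h}_\theta^{(k)}|_{\theta = 0}$ to coincide with the Fefferman--Graham expansion of $h_0$.

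Substituting the ansatz into $\ric(g) + ng$ and collecting the coefficient of $\rho^k$, one obtains at each $k$ a linear second-order ODE on $[0,\theta_0]$,
\begin{equation*}
  L_k\,\bar{h}_\theta^{(k)} = F_k\bigl(\bar{h}_\theta^{(0)},\ldots,\bar{h}_\theta^{(k-1)}\bigr),
\end{equation*}
where $L_k$ is a universal linear operator whose indicial behavior at the regular singular point $\theta = 0$ is governed by the characteristic equation \eqref{hyperchareq} at $s = k$, and the inhomogeneity $F_k$ is built algebraically from the already-determined lower-order coefficients. This ODE comes with a two-point boundary problem: at $\theta = 0$, the Frobenius exponent forced by regularity of the ambient AH metric, together with compatibility with the slicewise Fefferman--Graham expansion of $h_0$; at $\theta = \theta_0$, the Neumann-type condition $\partial_\theta \bar{h}_\theta^{(k)} = 0$ extracted from the umbilic condition.

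I would then invoke the analysis of the model linear problem developed earlier in the paper, which establishes existence and uniqueness for precisely this kind of two-point boundary value problem whenever \eqref{hyperchareq} at the relevant $s$ has no integer root, and which pins down the $\theta^n\log\theta$ correction when integer roots do appear (thereby justifying the definition of $\mathcal{M}(\theta_0,M)$ for even $n \geq 4$). For $k < n$ the Frobenius exponents fall below threshold and solvability is automatic, which gives existence to order $\rho^n$ and, by linearizing and applying the same ODE theory to the difference, the uniqueness statement $\bar{h}_\theta - \bar{h}_\theta' = O(\rho^n)$. At $k = n$ the free datum mirrors the classical Fefferman--Graham trace-free ambiguity, and for $k > n$ the hypothesis that \eqref{hyperchareq} has no integer solution at $s = n$ propagates to remove all further obstructions, securing both the $O(\rho^\infty)$ existence and the refined uniqueness from $o(\rho^n)$ to $O(\rho^\infty)$.

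The main obstacle, and the locus of the real technical labor, is the extraction of the operator $L_k$ from the Einstein equation in the cornered polar gauge and the verification that the slicewise Fefferman--Graham data at $\theta = 0$ is consistent with the Neumann condition at $\theta = \theta_0$, i.e.\ that no spurious solvability obstruction appears beyond those captured by \eqref{hyperchareq}. Once the indicial structure of $L_k$ is computed and identified with the model operator studied earlier, everything else reduces to careful bookkeeping of the induction on $k$ and an appeal to that linear theory; the Frobenius analysis of the model problem is thus the analytic heart of the proof.
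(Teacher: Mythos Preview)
Your overall strategy—expand in powers of $\rho$, solve an ODE on each fiber of $\tS$ at each order, use the model linear analysis for solvability—matches the paper's approach. But two substantive gaps in the proposal would prevent it from going through as written.

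First, the system at each order is \emph{overdetermined}: in the gauge-fixed normal form there are $\frac{(n+1)(n+2)}{2}$ Einstein equations for only $\frac{n(n+1)}{2}$ unknown components of $\bar{h}_\theta^{(k)}$. You cannot simply write $L_k\bar{h}_\theta^{(k)} = F_k$ as a single ODE and solve; one must decompose the indicial operator into its irreducible pieces (tracefree tangential, trace, normal, etc.), use only some of the pieces to determine $\varphi$, and then invoke the contracted Bianchi identity to verify the remaining equations are automatically satisfied. The paper does exactly this, and it is not just bookkeeping: the Bianchi analysis is where a nontrivial compatibility check occurs.

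Second, and closely related, you treat the boundary data at $\theta = 0$ as the given Fefferman--Graham expansion of a fixed $h_0 \in [h]$. But the representative $h_0$ is \emph{not} fixed in advance—it is uniquely determined to order $n$ by the problem. The paper writes $\bar{h}_0 = \chi(d\rho^2 + k_\rho)$ with $\chi$ an unknown function, and determines $\chi$ order by order precisely through the Bianchi analysis: at each $\gamma < n$ there is a unique choice of the $\rho^\gamma$-coefficient of $\chi$ that makes the unused Einstein components vanish to the required order. Without introducing $\chi$ as an additional unknown and coupling its determination to the Bianchi identity, the overdetermined system will not close, and the uniqueness claim for $h_0$ cannot be established. (At order $\gamma = n$ the coupling degenerates, $\chi$ becomes free, and a different subset of the indicial equations must be used—another feature your sketch omits.)

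A smaller point: the characteristic equation \eqref{hyperchareq} is always taken at $s = n$ for the Einstein problem; the varying parameter is the order $\nu$ in $\rho$, not $s$. Writing ``\eqref{hyperchareq} at $s = k$'' misidentifies the indicial structure.
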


Notice that the given data is only $(M,[h])$ and $\lambda$, and that we get an Einstein metric
in normal form that is unique up to order $n$. In particular, the induced metric $h_0 \in [h]$ is an AH representative of the conformal class
that is invariantly defined to order $n$ given only $[h]$ and $\lambda$.

Uniqueness in Theorem \ref{einthm} should hold without \ref{constrho0}, but we do not yet have a proof of this.

The proof of Theorem \ref{einthm} involves the complications generally associated with Einstein's equations: nonlinearity, tensor fields, and of course (as already mentioned)
gauge invariance. Before taking up its proof, then, we analyze a simpler linear problem that already raises several of the distinctive issues of analysis in our setting; namely,
we consider the formal expansion, at the corner of a constant-angle CAH space, of eigenfunctions of the Laplacian with inhomogeneous Dirichlet condition at the infinite boundary $\tM$ and a homogeneous Robin condition
at the finite boundary, $Q$. (Besides being a natural boundary condition, this will prove to be of direct relevance to the Einstein problem.) In the usual way, we are using Dirichlet condition
to mean prescribing the coefficient at the power of the leading indicial root at $\theta = 0$; see \cite{gz03}.
The problem we wish to solve is 
\begin{equation}
  \label{lineq}
  \Delta_gu + s(n - s)u = 0,
\end{equation} where the expression of the eigenvalue in terms of the spectral parameter $s > \frac{n}{2}$ is traditional (see e.g. \cite{mm87},\cite{gz03}). 
Thus, if $s = n$, and we are looking for harmonic functions, then indeed our boundary condition at $\tM$ is an inhomogeneous Dirichlet condition.

The analysis of the linear problem proceeds in several steps. The key idea, as in general for such constructions, is to determine and then study the
\emph{indicial operator} of $\Delta + s(n - s)$, which is an operator on $C^{\infty}(\tS)$ defined by $I_{s,\nu}(u) = \rho^{-\nu}
\left[(\Delta_g + s(n - s))(\rho^{\nu}\tilde{u})\right]|_{\rho = 0}$,
where $\rho$ is a particular defining function for $\tS$ in $\tX$ and $\tilde{u}$ is an extension of $u$ to $\tX$.
However, we here meet a significant difference from the usual AH case: whereas the indicial operator
is there an algebraic operator, due to the edge structure of $g$ at $\tS$, it here restricts to
a second-order ordinary differential operator on each fiber of $\tS$, with a regular singularity at $\theta = 0$:
\begin{equation}
  \label{scalindop}
  I_{s,\nu}(u) = \sin^2(\theta)\partial_{\theta}^2u
    + (1 - n)\sin(\theta)\cos(\theta)\partial_{\theta}u
    + \nu(\nu + 1 - n)\sin^2(\theta)u + s(n - s)u
\end{equation}
For any $\nu$, the indicial roots of $I_{s,\nu}$ at $\theta = 0$ are $n - s$ and $s$. The key content of Section \ref{odesubsec},
then, is an analysis of this operator with the ``Dirichlet'' boundary condition $u(\theta) = o(\theta^{n - s})$ at $\theta = 0$ and the Robin condition 
$\partial_{\theta}u(\theta_0) + (s - n)\cot(\theta_0)u(\theta_0) = 0$ at $\theta = \theta_0$, for $0 < \theta_0 < \pi$.
We study the mapping properties of the Green's operator, and also study the
indicial roots of the Laplacian, or values of $\nu$ for which the indicial operator fails to be injective
with the given boundary conditions. The latter is equivalent to studying the singular Sturm-Liouville eigenvalue problem for the operator
$L = -\partial_{\theta}^2 + (2s - n - 1)\cot(\theta)\partial_{\theta} + (s - 1)(s - n)$. We estimate the lowest eigenvalue, and can characterize the eigenvalues in general as
the roots of an equation involving hypergeometric functions.
In the case that $\theta_0 = \frac{\pi}{2}$, we can calculate explicitly that they are $\lambda_{k} = \nu_k(\nu_k + 1 - n)$ for
$\nu_k = s + 2k$ ($k \geq 0$). For this reason, we restrict our full analysis to the case $\theta_0 = \frac{\pi}{2}$: although similar ideas would
apply in the general case, it would be difficult to be as specific as we can be when we know the eigenvalues explicitly.
For that case, in Section \ref{lapsec} we formally construct a harmonic function order by order in $\rho$, solving at each order 
$j$ an equation of the form $I_ju = f_j$. When $2s \notin \mathbb{Z}$, we can construct a unique solution iteratively without ever encountering an indicial root, as
$s - (n - s)$ is likewise non-integral. Otherwise,
when $j = s + 2k$ is
an indicial root, we show that we can proceed by including powers of $\log(\rho)$ in the solution, although uniqueness is lost.
In such a case, a formal solution could be uniquely parametrized by $u|_{\tM}$ and by $\left\{ \upsilon_k \right\}_{k = 0}^{\infty}$, where $\upsilon_k \in C^{\infty}(S)$ 
parametrizes the formal freedom at order $s + 2k$.

Depending on $s$, we define $\mathcal{P}_s(\tX)$ to be either smooth functions, or those functions on $\tX$ that have an asymptotic expansion in $\rho, \theta, \theta^{2s - n}\log(\theta)$
to the first power, and $\rho^{2s - n + 2k}\log(\rho)^k (k \geq 0)$, where $\rho$ is a defining function for $\tM \cap \tS$ in $\tM$; see page \pageref{Px} for a precise definition. Our result
is as follows.

\begin{theorem}
  \label{lapthm}
   Let $(\tX^{n+1},\tM,\tQ,\tS)$ be the blowup of the cornered space $(X,M,Q)$, with $g = b^*g_X$ an admissible metric such that $M$ and $Q$ make constant angle
   $\frac{\pi}{2}$ with respect to $g_X$. Let $\psi \in C^{\infty}(\tM)$ and $s > \frac{n}{2}$. There exists $F \in \mathcal{P}_s(\tX)$ such that, if $u = \rho^{n - s}\sin^{n - s}(\theta)F$, then
   $\Delta_gu + s(n - s)u = O(\rho^{\infty})$, such that $F|_{\tM} = \psi$, and such that $\partial_{\nu}u + s(n - s)\cot(\theta_0)u\equiv 0$ along $Q$, where $\partial_{\nu}$ is the normal
   derivative at $\tQ$. If $2s \notin \mathbb{Z}$, then such $F$ is unique to infinite order. Otherwise,
   if $F_1$ and $F_2$ are two such functions, then $F_1 - F_2 = O(\rho^{2s - n})$, and $\rho^{n - 2s}(u_1 - u_2)|_{\tS} = \upsilon_0\sin^{2s - n}\Theta$, where
   $\upsilon_0 \in C^{\infty}(\tS)$ is constant on fibers, and $\Theta$ is the natural angle function on $\tS$ induced by $g$.
\end{theorem}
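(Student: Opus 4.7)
The plan is to construct $F$ as a formal series
$F \sim \sum_{j,k \geq 0} F_{j,k}(\theta)\,\rho^j \log(\rho)^k$,
with log terms appearing only at resonant orders, and with the coefficients $F_{j,k}$ determined recursively by an ODE in $\theta$. The ansatz $u = \rho^{n-s}\sin^{n-s}(\theta)F$ strips off the leading indicial factors in both the $\theta$ direction (at $\tM$) and the $\rho$ direction (along the edge at $\tS$), so that the Graham--Zworski Dirichlet datum $F|_{\tM} = \psi$ becomes the inhomogeneous ordinary Dirichlet condition $F_{j,0}(0) = \frac{1}{j!}\partial_\rho^j\psi|_{\rho=0}$ together with $F_{j,k}(0) = 0$ for $k \geq 1$.

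First, I would substitute the ansatz into $(\Delta_g + s(n-s))u = 0$. Admissibility of $g$ guarantees that $\bar h_\theta = \rho^2 h_\theta$ is smooth in $(\rho,\theta)$, so $\Delta_g$ admits a Taylor expansion in $\rho$ whose action on $\rho^{\nu}\cdot(\text{function of }\theta)$ is, at leading order, precisely the indicial operator $I_{s,\nu}$ of equation \eqref{scalindop}. Collecting coefficients at $\rho^{n-s+j}\log(\rho)^k$ produces, at each order, an equation of the form $I_{s,\,n-s+j}(\sin^{n-s}(\theta) F_{j,k}) = \sin^{n-s}(\theta)\, G_{j,k}(\theta)$, where $G_{j,k}$ is a known linear combination of previously determined $F_{i,\ell}$ (with $i < j$, or $i = j$ and $\ell > k$) together with tangential derivatives along $\tS$. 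The Robin condition on $u$ at $\tQ$ becomes, using $\cot(\pi/2) = 0$, the homogeneous Neumann condition $\partial_\theta F_{j,k}(\pi/2) = 0$. Together with the Dirichlet condition at $\theta = 0$, this is exactly the singular two-point boundary-value problem for $I_{s,\nu}$ analyzed in Section \ref{odesubsec}.

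Next, I would apply the Green's operator and Sturm--Liouville analysis from that section to invert $I_{s,\,n-s+j}$ at each order. Invertibility fails only when $n-s+j$ equals one of the eigenvalues $\nu_k = s+2k$, i.e., when $j = 2s-n+2k$ for some $k \geq 0$. When $2s \notin \mathbb{Z}$, none of these values is a nonnegative integer, so the recursion determines each $F_{j,0}$ uniquely without any log terms, and Borel summation yields $F \in \mathcal{P}_s(\tX)$ with the required properties.

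The main obstacle is the resonant case $2s \in \mathbb{Z}$. At each resonant order $j = 2s-n+2k$, solvability of $I_{s,s+2k} F_{j,0} = G_{j,0}$ requires orthogonality of $G_{j,0}$ to the one-dimensional kernel, and this need not hold. At $k = 0$ the kernel is spanned by $\sin^s(\theta)$ (a direct verification from the ODE, using the Neumann condition at $\pi/2$ and the indicial root behavior $\theta^s$ at $0$), which corresponds in the $F$-variable, after dividing by $\sin^{n-s}(\theta)$, to $\sin^{2s-n}(\theta)$. I would resolve the obstruction by introducing a $\log\rho$ factor at this order: setting $F_{j,1} = c\,\sin^{2s-n}(\theta)$ for a scalar $c = c(y) \in C^\infty(S)$, the standard identity $(\Delta_g+s(n-s))(\rho^{\nu}\log\rho\cdot\phi) = \rho^{\nu}\log\rho\cdot I_{s,\nu}\phi + \rho^{\nu}\partial_\nu I_{s,\nu}\phi + O(\rho^{\nu+1})$ contributes an extra source at the $\log^0$ level proportional to $(\partial_\nu I_{s,\nu})|_{\nu=s}\sin^s(\theta)$, and $c$ may be chosen to absorb the projection of $G_{j,0}$ onto the kernel direction. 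This leaves one free tangential function $\upsilon_k \in C^\infty(S)$ per resonance -- the component of $F_{j,0}$ along the kernel element -- yielding at $k=0$ the asserted ambiguity whose leading profile in the $F$-variable is $\upsilon_0\sin^{2s-n}\Theta$. Successive resonances at $k \geq 1$ demand iteratively higher powers of $\log\rho$ by the same mechanism, producing exactly the polyhomogeneous index set defining $\mathcal{P}_s(\tX)$; the uniqueness statement $F_1 - F_2 = O(\rho^{2s-n})$ follows because no freedom is introduced before the first resonance.
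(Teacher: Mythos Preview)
Your proposal is correct and follows essentially the same route as the paper: an order-by-order construction in $\rho$ that at each step solves the singular ODE $I_{s,n-s+j}$ on $[0,\pi/2]$ with the Dirichlet/Neumann conditions, invoking the Green's operator of Proposition~\ref{indexprop} at nonresonant orders and introducing $\log\rho$ corrections at the resonances $j = 2s-n+2k$ by projecting onto the one-dimensional kernel, then summing via Borel's lemma. The only organizational difference is that the paper packages the log-term step into an auxiliary operator $J_{s+q}$ on the finite-dimensional spaces $\mathcal{E}_{s,q}$, $\mathcal{F}_{s,q}$ (Proposition~\ref{jprop}), which systematizes the bookkeeping of multiple powers of $\log\rho$ at higher resonances; your description via $\partial_\nu I_{s,\nu}$ is the same mechanism written out by hand.
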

We note that the power $n - s$ of $\rho$ appearing in the definition of $u$ in this theorem is not fully determined by the problem, and could be chosen differently depending on $s$. This is discussed
more after the proof of the theorem in Section \ref{lapsec}. (In a paper in preparation, we will consider existence and polyhomogeneity for eigenfunctions of the Laplacian, with boundary conditions as above.)

The proof of Theorem \ref{einthm} is conceptually similar to that of the simpler Theorem \ref{lapthm}, but significant complications
arise in the Einstein setting, as mentioned earlier. We define an indicial operator for the Einstein operator $E(g) = \ric(g) + ng$, as in the scalar case, by
$I^{\gamma}(\varphi) = \rho^{-\gamma}(E(g + \rho^{\gamma}\varphi) - E(g))|_{\rho = 0}$, where $\varphi$ is a section
of an appropriate bundle; and as in \cite{gl91}, we decompose it
into its irreducible parts, in this case seven of them. Once again, and unlike in that paper, the indicial operator is a second-order
system of
regular singular ordinary differential operators as opposed to algebraic operators.
As in the AH case, the part of the indicial operator acting on the trace-free part of the metric perturbations tangent to $S$
is identical with the indicial operator of the scalar Laplacian. We construct the Einstein metric term-by-term in $\rho$. At each order, this gives
us a system of second-order regular singular ordinary differential equations to solve, which is overdetermined because of the gauge-broken form
(\ref{polarform}). An additional complication in the analysis comes from the fact that, since the Einstein metric is unique to order $n$,
then as observed above and unlike in the case of the usual AH Einstein existence problem,
the induced metric $h_0$ in the conformal infinity is uniquely determined and
cannot be chosen arbitrarily in the conformal class. These two problems are solved in tandem. For our boundary data,
we take $h \in [h]$ to be arbitrary (but AH), and then impose the boundary condition $h_0 = \chi h$, where $\chi \in C^{\infty}(M)$ is some
scalar function to be determined order by order in $\rho$ along with $g$. Thus the induced metric is 
determined simultaneously with the metric $g$. At each order,
we use four of the seven irreducible parts of the indicial operator to solve uniquely for the perturbation of the metric at that order.
We then use the Bianchi identity to show that the remaining three equations are also satisfied. However, this turns out to be
true only for a unique choice of the perturbation of $\chi$, and thus we get uniqueness both for $g$ and $\chi h$ (within $[h]$) up to order $n$.

The behavior of the system changes at order $n$: at that order, $\chi$ is no longer determined, but may be chosen freely, and different components
of the indicial operator must be used to determine $\varphi$. If this can be done successfully (i.e., if $n$ is not an indicial root), then
at higher orders, the system once again acts as at lower powers, and $\chi$ is uniquely determined at each step.
The trace-free part of the indicial operator has a set of eigenvalues going to infinity; in the special case that $\lambda = 0$ ($\theta_0
= \frac{\pi}{2}$), the first of these is at order $n$, as mentioned above, and this allows us to identify a conformal hypersurface
invariant obstructing smooth existence.

\begin{theorem}
  \label{invthm}
  Let $M^n$ ($n \geq 2$) be a manifold with boundary $S$, and $\tau$ a smooth
  metric on $M$. Let $[h]$ be the AH conformal class corresponding to $[\tau]$.
  There is a generically nontrivial symmetric, trace-free $2$-tensor field $\mathcal{K}(\tau)$
  on $S$, defined by (\ref{hinvform}), whose nonvanishing obstructs the formal existence
  of a smooth normal-form metric $g = \csc^2(\theta)[d\theta^2 + h_{\theta}]$ on $[0,\frac{\pi}{2}] \times M$ satisfying
  \ref{einfirst} - \ref{einsecondlast} from Theorem \ref{einthm}, and also satisfying
  $\ric(g) + ng = O_g(\rho^{n + 1})$.

  Moreover, if $\hat{\tau} = \Omega^2\tau$ for $\Omega \in C^{\infty}(M)$, then $\mathcal{K}(\hat{\tau}) = (\Omega|_S)^{2-n}\mathcal{K}(\tau)$.
\end{theorem}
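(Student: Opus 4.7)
The plan is to identify $\mathcal{K}(\tau)$ as the obstruction to extending the Einstein expansion of Theorem \ref{einthm} one order past $\rho^n$, and to derive the conformal transformation law by weight-tracking.

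With $\lambda = 0$, Theorem \ref{einthm} applied with the AH representative $h \in [h]$ associated to $\tau$ produces a normal-form metric $g = \csc^2(\theta)[d\theta^2 + h_\theta]$ and a scalar $\chi$ with $h_0 = \chi h$ and $\ric(g) + ng = O_g(\rho^n)$, uniquely determined modulo $O(\rho^n)$. To define $\mathcal{K}(\tau)$, I would attempt the next order: write $h_\theta + \rho^n \varphi_\theta$ and seek $\varphi_\theta$ and a correction to $\chi$ annihilating the error at order $\rho^n$. As in the proof of Theorem \ref{einthm}, this reduces to an inhomogeneous regular singular system of ODEs in $\theta$ on each fiber of $\tS \to S$, one equation for each of the seven irreducible components of $\varphi$.

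The crucial observation is that the component acting on the trace-free $S$-tangential part of $\varphi$ is identical to the scalar indicial operator (\ref{scalindop}) at $s = n$, $\nu = n$, with Dirichlet condition at $\theta = 0$ and the boundary condition $\partial_\theta \varphi_\theta|_{\theta = \pi/2} = 0$ arising from linearizing the CMC umbilic condition (and coinciding at $\theta_0 = \pi/2$ with the Robin condition of Section \ref{odesubsec}). By the eigenvalue computation there, $\nu_0 = s = n$ is precisely the smallest indicial root, so the homogeneous ODE has a one-dimensional kernel spanned by an explicit eigenfunction $\phi_0(\theta)$. Self-adjointness of the associated Sturm--Liouville problem implies that the inhomogeneous equation on each fiber is solvable iff its right-hand side is orthogonal to $\phi_0$, and I would define $\mathcal{K}(\tau)$ as the resulting fiberwise pairing applied to the trace-free $S$-tangential component of $\rho^{-n}(\ric(g) + ng)|_{\rho = 0}$; this is (\ref{hinvform}). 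The remaining six irreducible components, together with the freedom in $\chi$ now available at this order, can be solved unobstructedly, confirming that $\mathcal{K}(\tau)$ is the unique obstruction to smooth continuation. Well-definedness, modulo the non-uniqueness in Theorem \ref{einthm}, follows because any two admissible choices of $(h_\theta,\chi)$ differ at order $\rho^n$ only by perturbations already in the image of the indicial operator, leaving the cokernel projection invariant. Generic nontriviality follows from a direct example, e.g., a perturbation of the half-hyperbolic model.

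The main obstacle is the conformal transformation law. Under $\tau \mapsto \hat{\tau} = \Omega^2\tau$, the AH class $[h]$ and the Einstein metric $g$ are unchanged, but the boundary representative $h$ rescales by $\Omega^2|_S$ at $S$, and the associated defining function and scalar $\chi$ shift accordingly, while the angular variable $\theta$ and the eigenfunction $\phi_0$ are preserved. Substituting into (\ref{hinvform}) and tracking how each factor scales under $\Omega$---the $\rho^{-n}$ factor contributing negative weight, the intrinsic tensor $\ric(g) + ng$ contributing none, and the identification of the result as a trace-free symmetric $2$-tensor on $S$ via the boundary metric contributing weight $2$---produces the net weight $\Omega^{2-n}|_S$. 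The delicate point is to verify that subleading corrections in the defining function and in $\chi$ do not pollute the top-order result; this should follow from the vanishing of $\ric(g) + ng$ at all orders below $n$, but will require careful bookkeeping and is the main technical challenge.
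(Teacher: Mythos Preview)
Your approach is essentially the paper's. You correctly identify $\mathcal{K}(\tau)$ as the cokernel projection of the trace-free tangential error at order $\rho^n$ onto the kernel eigenfunction $\phi_0(\theta)=\sin^n\theta$, and your well-definedness argument matches the paper's: perturbations of $\bar h_\theta$ at order $\rho^n$ satisfying the boundary conditions lie in the image of $\mathring{I}^n$, hence orthogonal to $\phi_0$.

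Two points of execution differ. For the conformal law, the paper does not weight-track abstractly; it writes down the two geodesic-coordinate diffeomorphisms $\eta,\hat\eta$ associated to $\tau,\hat\tau$, pulls the $\tau$-construction back to $\hat\rho$-coordinates, and invokes the uniqueness clause of Proposition~\ref{auxprop} to conclude that the pulled-back family agrees with the $\hat\tau$-construction modulo $O(\hat\rho^n)$. The only residual discrepancy in $\mathcal{K}$ then comes from the factor $\rho^{2-n}$ versus $\hat\rho^{2-n}$, and one computes directly that $\hat\rho/\rho\to\Omega|_S$ at the boundary. This is exactly the mechanism that resolves your ``delicate point'' about subleading corrections in the defining function and $\chi$: uniqueness mod $O(\rho^n)$ absorbs them all at once, so no term-by-term bookkeeping is needed. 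Your vanishing-below-order-$n$ heuristic is not quite enough on its own, since the two constructions use genuinely different coordinate systems and different $\chi$'s; the uniqueness statement is the clean way to align them.

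For generic nontriviality, the paper does not produce an example but instead computes directly from (\ref{emunu}) that
\[
\mathcal{K}(\tau)=-\tfrac{1}{2(n-1)!}\,\tf_{k_0}\!\bigl(\partial_\rho^n k_\rho|_{\rho=0}\bigr)+\mathcal{K}'(\tau),
\]
where $\mathcal{K}'(\tau)$ depends only on $\partial_\rho^j k_\rho|_{\rho=0}$ for $j<n$. Since the $n$th normal jet of $\tau$ can be varied independently of lower jets, this exhibits nontriviality and is somewhat more informative than a single example.
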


Theorem \ref{einthm} is concerned with smooth formal series in $\rho$.
As we will show, the lowest non-negative indicial root $\gamma_0$ in $\rho$ of the Einstein operator satisfies $n - 1 < \gamma_0 < n$ 
if $\theta_0 > \frac{\pi}{2}$, $\gamma_0 = n$ 
if $\theta_0 = \frac{\pi}{2}$, and $n < \gamma_0 < \infty$ if $\theta_0 < \frac{\pi}{2}$.
Thus, if $\frac{\pi}{2} < \theta_0 < \pi$, then we would expect additional solutions with leading asymptotics $\rho^{\gamma}$, where
$n - 1 < \gamma < n$. If $\theta_0 \leq \frac{\pi}{2}$, then uniqueness would hold mod $O(\rho^n)$ even allowing non-integral powers of $\rho$.
If $\theta_0 = \frac{\pi}{2}$, then a term of the form $\rho^n\log(\rho)$ would generically appear, as suggested by Theorem \ref{invthm} above. The form
of the solution to higher order depends on indicial roots, which depend on $\theta_0$. Uniqueness fails at order $n$ in every case, however.

The paper is organized as follows.

In Section \ref{backsec}, we define cornered AH spaces and review the results on them from \cite{m16}, including the definition of
admissible metrics, the 0-edge structure on CAH spaces, and the normal form. 

In Section \ref{smoothsec}, we study smooth Einstein
metrics and deduce the compatibility conditions for smoothness discussed earlier, under the assumption of the CMC umbilic condition
at the finite boundary.

In Section \ref{laplsec}, we study the scalar Laplacian on constant-angle CAH spaces. After calculating the scalar Laplace operator in coordinates on the blowup,
we then compute its indicial operator (\ref{scalindop}), and prove theorems about its eigenvalues and the mapping properties of its Green's operator. 
We do this for general $\theta_0$ and arbitrary integral powers of $\log(\theta)$, since although these features are unnecessary
for our analysis of the linear scalar problem with $\theta_0 = \frac{\pi}{2}$, they will be used in
the nonlinear Einstein setting. We then prove Theorem \ref{lapthm}.

Finally, in Section \ref{einsec}, we study formal existence for aribtrary $S$, enlarging the
class of metrics from those smooth on $X$ to those polyhomogeneous on $\tX = [0,\theta_0]_{\theta} \times M$ in 
the form (\ref{polarform}); and prove Theorems \ref{einthm} and \ref{invthm}. We also discuss an approach to finding
formal solutions for which the Einstein tensor vanishes at both $\tS$ and $\tM$.

\subsection*{Acknowledgements} This is doctoral work under the supervision of C. Robin Graham at the University of Washington (UW). I am most grateful to him for suggesting this and related problems, and
for the really extraordinary time and attention he has given to answering questions and making suggestions large and small.
I am also grateful to Andreas Karch for bringing the topic to both of our attention
in the first place, to John Lee and Daniel Pollack for numerous helpful conversations, and to Hart Smith
for financial support. I also greatly appreciate the excellent environment and support at Princeton University while I completed the paper. This research was partially supported by the National Science Foundation under RTG Grants
DMS-0838212 at UW and DMS-1502525 at Princeton, and Grant DMS-1161283 at UW.

\section{Background}\label{backsec}

Recall that an asymptotically hyperbolic (AH) space is a compact manifold $X^{n + 1}$ with
boundary $\partial X = M^n$, equipped on the interior with a metric $g$ such that, for any
defining function $\varphi$ of $M$ in $X$, the metric $\varphi^2g$ extends smoothly to all
of $X$; and such that, for any such defining function, $|d\varphi|^2_{\varphi^2g} = 1$
along $M$.

We now review the definition, properties, and blowup of a cornered asymptotically hyperbolic
(CAH) metric, as given in \cite{m16}.

\begin{definition}
  \label{intrindef}
  A \emph{cornered space} is a smooth manifold with codimension-two corners, $X^{n + 1}$, such that
  \begin{enumerate}[(i)]
    \item there are submanifolds with boundary $M^n \subset \partial X$ and $Q^n \subset \partial X$ of the boundary
      $\partial X$, such that $\emptyset \neq S = M \cap Q$ is the mutual boundary, and is the entire codimension-two corner of $X$, and such that
      $\partial X = M \cup Q$; and
    \item the corner $S \subset M$ is a smooth, compact hypersurface in $M$.
  \end{enumerate}
  We denote a cornered space by $(X,M,Q)$, and we set $\mathring{X} = X \setminus(Q \cup M)$.

  Given a cornered space $(X,M,Q)$, a smooth (resp. $C^k$) \emph{cornered conformally compact metric} on $X$ is a smooth Riemannian metric
  $g_+$ on $X \setminus M$ such that, for any smooth defining function $\varphi$ for $M$, the metric $\varphi^2g_+$ extends
  to a smooth (resp. $C^k$) metric on $X$. We call such a metric a \emph{cornered asymptotically hyperbolic (CAH) metric} if for some
  (hence any) such defining function $\varphi$, the condition $|d\varphi|_{\varphi^2g_+} = 1$ holds along $M$.

  A smooth (resp. $C^k$) \emph{cornered asymptotically hyperbolic (CAH) space} is a cornered space $(X,M,Q)$ together with a smooth (resp. $C^k$)
  CAH metric $g_+$. We denote such a space by $(X,M,Q,g_+)$. The definition for cornered conformally compact space is analogous.
\end{definition}
For a cornered conformally compact space $(X,M,Q,g_+)$, the \emph{conformal infinity} $[h]$ is the conformal class
$[\varphi^2g_+|_{TM}]$ on $M$, where $\varphi$ is a defining function for $M$. Notice that a consequence of the
fact that $X$ is a manifold with corners
is that the boundary components $M$ and $Q$ intersect transversely.

For each $x \in S$, we define $\theta_0(x)$ to be the angle between $M$ and $Q$ at $X$ with respect to $\varphi^2g_+$, where $\varphi$ is any smooth defining
function for $M$. Plainly $\theta_0 \in C^{\infty}(S)$.

As described in more detail in \cite{m16}, we may blow up the cornered space $X$ along $S$ as follows, along the lines of \cite{mel08}.
Let $s \in S$, and set $N_sS = T_sX/T_sS$, which is a two-dimensional vector space. Set $NS = \sqcup_{s \in S}N_sS$, and let $N_+S \subset NS$ be the
vectors pointing into $X$ (including into $M, Q$, or $S$). Thus, $N_+S$ is a bundle with fiber a closed cone in $\mathbb{R}^2$ and base $S$. Let $\tS =
(N_+S\setminus \left\{ 0 \right\}) / \mathbb{R}^+$, which is the total space of a fibration over $S$ with fiber $[0,1]$.
Now define the \emph{blow-up space} $\tX$ by $\tX = (X \setminus S) \sqcup \tS$, and the blow-down map $b:\tX: \tX \to X$ by
$b(x) = x$ if $x \in X \setminus S$ and $b(\ts) = \pi(\ts)$ for $\ts \in \tS$, where $\pi$ is the basepoint projection.
Then $\tX$ has a unique smooth structure as a manifold with corners of codimension two such that $b$ is smooth, $b|_{\tX \setminus \tS}:
\tX \setminus \tS \to X \setminus S$ is a diffeomorphism, and $db|_{\ts}$ has rank $n$ for $\ts \in \tS$.
We set $\tM = \overline{b^{-1}(M\setminus S)}$ and $\tQ = \overline{b^{-1}(Q\setminus S)}$.

Recall that an edge structure on a manifold with boundary is a fibration of the boundary, and the associated edge vector fields are the vector
fields that are tangent to the fibers at the boundary (\cite{maz91}). 
An important special case is a 0-structure (\cite{mm87}), for which the boundary fibers are points
and the edge vector fields are those that vanish at the boundary. The vector fields in that setting may be viewed as sections of the 0-bundle,
${}^0TM$, with dual bundle ${}^0T^*M$. On our blowup space $\tX$, the blown-up face $\tS$ is the total space
of the fibration $b|_{\tS}:\tS \to S$ with interval fibers, while we can view $b|_{\tM}:\tM \to M$ as a fibration whose fibers are points. We will
refer to the structure defined by these two fibrations as a 0-edge structure, and the associated 0-edge vector fields are the smooth vector fields
on $\tX$ which are tangent to the fibers at $\tS$, and which vanish at $\tM$.

The 0-edge vector fields may be easily expressed in appropriate local coordinates. Let $\theta$ be a defining function for $\tM$ whose restriction to
each fiber of $\tS$ is a fiber coordinate taking values in $[0,\pi)$; let $\rho$ be any defining function for $\tS$; and locally let
$x^s, 1 \leq s \leq n - 1$, be the lifts to $\tX$ of functions on $X$ that restrict to local coordinates on $S$. Then the vector fields
\begin{equation*}
  \sin\theta \frac{\partial}{\partial \theta}, \quad \rho\sin\theta \frac{\partial}{\partial x^s}, \quad \rho\sin\theta \frac{\partial}{\partial \rho}
\end{equation*}
span the 0-edge vector fields over $C^{\infty}(\tX)$. As in the usual edge case, there is a well-defined vector bundle ${}^{0e}T\tX$
whose smooth sections are the 0-edge vector fields. The smooth sections of the dual bundle ${}^{0e}T^*\tX$ are locally spanned by
\begin{equation}
  \label{dualframe}
  \frac{d\theta}{\sin\theta},\quad\frac{dx^s}{\rho\sin\theta}, \quad\frac{d\rho}{\rho\sin\theta}.
\end{equation}
By a 0-edge metric we will mean a smooth positive definite section $g$ of $S^2({}^{0e}T^*\tX)$. This is equivalent to the condition that
locally $g$ may be written as
\begin{equation*}
  g = \left( \begin{array}{ccc}
    \frac{d\theta}{\sin\theta},&\frac{dx^s}{\rho\sin\theta},&\frac{d\rho}{\rho\sin\theta}\end{array}\right)
    G
  \left(
  \begin{array}{ccc}
    \frac{d\theta}{\sin\theta}\\
    \frac{dx^s}{\rho\sin\theta}\\
    \frac{d\rho}{\rho\sin\theta}
  \end{array}
    \right),
\end{equation*}
where $G$ is a smooth, positive-definite matrix-valued function on $\tX$.

We may now define admissible metrics.

\begin{definition}
  \label{admis}
  An admissible metric on $\tX$ is a 0-edge metric $g$ on $\tX$ which can be written in the form
  \begin{equation*}
    g = b^*g_+ + \mathcal{L},
  \end{equation*}
  where $g_+$ is a smooth cornered asymptotically hyperbolic metric on $X$ and $\mathcal{L}$ is a smooth section of
  $S^2({}^{0e}T^*\tX)$ that vanishes on $\tS$ and $\tM$.
\end{definition}
Note that it was shown in \cite{m16} that $b^*g_+$ is, itself, always a 0-edge metric.

Since $b|_{\tX \setminus (\tM \cup \tS)}:\tX \setminus (\tM \cup \tS) \to X \setminus M$ is a diffeomorphism, an admissible
$g$ uniquely determines a smooth metric $g_X$ on $X \setminus M$ satisfying $b^*g_X = g$ on
$\tX \setminus (\tM \cup \tS)$. Since $\mathcal{L}$ vanishes on $\tS$ and $\tM$, it is not hard
to see that $g_X$ is a $C^0$ CAH metric on $X$. Thus we will call a metric $g_X$ on $X \setminus M$ an admissible metric on $X$ if
$b^*g_X$ extends to an admissible metric on $\tX$.

Observe that an admissible metric $g_X$ on $X$ determines a well-defined angle function $\Theta$ on the blown-up face $\tS$,
which serves as a smooth fiber coordinate.
Let $\ts \in \widetilde{S}$, with $s = b(\ts) \in S$. Then, under one interpretation, 
$\ts$ naturally represents a hyperplane $P_{\ts}$ in $T_sX$
containing $T_sS$. The angle $\Theta(\ts)$ between $P_{s}$ and $T_sM$ is well-defined. 
It can be computed as follows: let $\varphi$ be any defining function for $M$, 
and $\bar{g}_X = \varphi^2g_X$. Let $\bar{\nu}_M \in T_sM$
be normal to $T_sS$, inward pointing in $M$, and unit $\bar{g}_X$-length. 
Similarly, let $\bar{\nu}_{P_{\ts}}$ be inward-pointing in $P_{\ts}$, normal to $T_sS$, and unit length. Then
$\Theta(\ts) = \cos^{-1}(\bar{g}_X(\bar{\nu}_M,\bar{\nu}_{P_{\ts}}))$. We could also have defined $\Theta$ using
$g_+$, and in particular, it is clear that $\Theta \in C^{\infty}(\tS)$.
It is easy to show that this is defined independently of $\varphi$. Thus, $\Theta$ is well-defined.

It will be convenient to recall that it was shown in Section 2 of \cite{m16} that there are
coordinates $(\theta,x^s,\rho)$ in which $\theta$ is a defining function for $\tM$ and restricts at $\Theta$ to
$\tS$, in which $\rho$ is a defining function for $\tS$, and in which $\left\{ x^s \right\}$ restrict as coordinates
to $\tS \cap \tM$ and are constant on the fibers of $\tS$, such that the metric $g$ takes the form
\begin{equation}
  \label{g}
g_{ij} = \csc^2(\theta)\left(\begin{array}{ccc}
    1 + O(\rho\sin\theta) & O(\sin \theta) & O(\sin \theta)\\
    O(\sin \theta) & \rho^{-2}k_{\rho} + O(\rho^{-1}\sin\theta) & O(\rho^{-1}\sin\theta)\\
    O(\sin\theta) & O(\rho^{-1}\sin\theta) & \rho^{-2} + O(\rho^{-1}\sin\theta)
  \end{array}\right),
\end{equation}
and
\begin{equation}
  \label{ginv}
  g^{ij} = \sin^2(\theta)\left(\begin{array}{ccc}
    1 + O(\rho\sin\theta) & O(\rho^2\sin\theta) & O(\rho^2\sin\theta)\\
    O(\rho^2\sin\theta) & \rho^2k_{\rho}^{-1} + O(\rho^3\sin\theta) & O(\rho^3\sin\theta)\\
    O(\rho^2\sin\theta) & O(\rho^3\sin\theta) & \rho^2 + O(\rho^3 \sin\theta)\end{array}\right).
\end{equation}
Much more can be said in the case that $M$ and $Q$ make constant angle, i.e., $\Theta$ is constant on $\tS \cap \tQ$.

\begin{theorem}[\cite{m16}, Corollary 1.5]
  \label{normformthm}
  Let $(X,M,Q,g)$ be an admissible CAH space in which $M$ and $Q$ make constant angle $\theta_0$ with respect to compactifications of $g$;
  and let $b:\tX \to X$ be the blowup of $X$. Then for sufficiently small neighborhoods $W$ of $S$ in $M$, there exist a unique
  neighborhood $\tU$ of $b^{-1}(W)$ in $\tX$ and a unique diffeomormphism $\zeta:[0,\theta_0]_{\theta} \times W \to \tU$
  such that $\zeta|_{\{0\}\times W} = \id_W$ and
  \begin{equation}
    \zeta^*g = \frac{d\theta^2 + h_{\theta}}{\sin^2(\theta)},
  \end{equation}
  where $h_{\theta} (0 \leq \theta \leq \theta_0)$ is a smooth one-parameter family of smooth asymptotically hyperbolic metrics on $(W,S)$,
  and such that $b^{-1}(M) = \zeta(\left\{ \theta = 0 \right\})$ and $b^{-1}(Q) = \zeta\left( \theta = \theta_0 \right))$. Moreover,
  $\partial_{\theta}\bar{h}_{\theta}|_{\rho = 0} = 0$, where $\bar{h}_{\theta} = \rho^2h_{\theta}$, and $\rho$ is any defining function
  for $S$ in $W$.
\end{theorem}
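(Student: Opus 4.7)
The plan is to construct $\zeta$ by first finding the correct function $\theta$ on a neighborhood of $\tS$ and then flowing along its appropriately normalized gradient. The key observation is that the claimed normal form is equivalent to the data of a smooth defining function $\theta$ for $\tM$ satisfying $|d\theta|^2_g = \sin^2(\theta)$ and $\theta\equiv\theta_0$ on $\tQ$, together with the level-set trivialization given by the gradient flow.

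First, I would construct $\theta$ by solving the PDE $g^{ij}\partial_i\theta\,\partial_j\theta=\sin^2(\theta)$. Formally substituting $\psi=\log\tan(\theta/2)$ rewrites this as the standard eikonal equation $|d\psi|^2_g=1$, so the geometric picture is that of a geodesic defining function adapted to the $\csc^2$-scaling. Using the explicit form (\ref{g}), (\ref{ginv}) of admissible metrics, the equation is a regular-singular first-order PDE that is noncharacteristic in directions transverse to $\tM$, so that a smooth local solution exists once boundary data is prescribed along the ``initial surface'' $\tS$. The natural boundary data is $\theta|_{\tS}=\Theta$, the angle function on $\tS$ defined by the admissible structure. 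The constant-angle hypothesis says precisely that $\Theta\equiv\theta_0$ on $\tS\cap\tQ$; combined with the fact that $\tQ$ is itself a level set of any smooth defining function whose restriction to $\tS$ equals $\Theta$, this forces $\tQ$ to coincide with $\{\theta=\theta_0\}$.

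Next, with $\theta$ in hand, define the vector field $T=\nabla^g\theta/|d\theta|^2_g$, normalized so that $T\theta\equiv 1$. Its integral curves are transverse to each level set $\{\theta=c\}$ and meet $\tM$ at $\theta=0$. Let $W$ be a small neighborhood of $S$ in $M$, identified with its preimage in $\tM$, and define $\zeta\colon[0,\theta_0]\times W\to\tU$ by $\zeta(\theta,y)=\Phi_\theta(y)$, where $\Phi_\theta$ is the time-$\theta$ flow of $T$ starting at $y$. Then $\zeta|_{\{0\}\times W}=\id_W$ and $\zeta(\{\theta_0\}\times W)=\tQ\cap\tU$ by the previous paragraph. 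That $\zeta^*g$ has no $d\theta\,dy$ cross terms follows from $g(T,\partial_{y^i})=(d\theta)(\partial_{y^i})=0$ along coordinate lines $\theta\mapsto\zeta(\theta,y)$, while the coefficient of $d\theta^2$ is $g(T,T)=1/|d\theta|^2_g=\csc^2(\theta)$. Hence $\zeta^*g=\csc^2(\theta)(d\theta^2+h_\theta)$ for a one-parameter family $h_\theta$ of metrics on $W$, which inherits smoothness and the AH property from admissibility of $g$.

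The extra condition $\partial_\theta\bar h_\theta|_{\rho=0}=0$ is the content of the constant-angle hypothesis translated into the normal-form coordinates: because the level sets $\{\theta=c\}$ all meet $\tS$ at the prescribed angle $c$, the first-order $\theta$-variation of $\bar h_\theta$ at the corner must vanish, a computation that reduces to differentiating the compactified metric along $\tS$ and using the constant-angle condition. Uniqueness of $\zeta$ and $\tU$ follows because both the eikonal equation (with its prescribed boundary data on $\tS$ and implicit initial data at $\tM$) and the flow of $T$ are uniquely determined by $g$ alone. The hardest step will be the regularity of $\theta$, and hence of $h_\theta$, across the corner $\tS$: the eikonal equation degenerates at $\tM$ and its characteristic surfaces limit onto $\tS$, so smoothness up to and across the corner requires a careful 0-edge analysis building on the precise asymptotics of $g$ in (\ref{g})–(\ref{ginv}).
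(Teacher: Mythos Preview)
This theorem is not proved in the present paper; it is quoted from \cite{m16} as Corollary~1.5, so there is no in-paper proof to compare against. Your overall strategy --- solve the eikonal-type equation $|d\theta|_g^2=\sin^2\theta$ and trivialize by the gradient flow --- is the standard one for such normal forms and is very likely close in spirit to what \cite{m16} does.

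There is, however, a genuine gap at the step where you conclude $\tQ=\{\theta=\theta_0\}$. You assert that ``$\tQ$ is itself a level set of any smooth defining function whose restriction to $\tS$ equals $\Theta$,'' but this is false: $\tQ$ is a boundary face of $\tX$, and for a generic defining function $\theta$ for $\tM$ with $\theta|_{\tS}=\Theta$ the hypersurface $\{\theta=\theta_0\}$ agrees with $\tQ$ only along $\tS\cap\tQ$ and then peels away in the interior. The constant-angle hypothesis gives $\theta=\theta_0$ on $\tS\cap\tQ$; to keep $\tQ$ a level set you would need $\nabla^g\theta$ to be normal to $\tQ$ along all of $\tQ$, and nothing you have written forces that. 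This is exactly where the constant-angle hypothesis must do nontrivial work, and the missing ingredient is typically either an additional boundary condition along $\tQ$ (together with a consistency argument showing the resulting overdetermined eikonal problem is solvable precisely under the constant-angle assumption) or a two-step construction that first puts $g$ in normal form relative to $\tQ$ and then matches it with the $\tM$-normal form. A secondary ambiguity: you invoke initial data both on $\tM$ (implicitly, via $\theta\equiv0$) and on $\tS$ (via $\theta|_{\tS}=\Theta$), but you do not say from which surface you are actually propagating, and the two setups require different analyses --- the former being a degenerate Cauchy problem that one usually handles multiplicatively (writing $\theta=e^{\omega}\vartheta$ for a fixed initial defining function $\vartheta$ and solving a nondegenerate equation for $\omega$), the latter requiring a noncharacteristic check at $\tS$.
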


(Again we use $\rho$ both for the function on $W$ and for its pullback to $[0,\theta_0] \times W$.)

As mentioned in the introduction, this theorem provides the gauge we will use in looking for an Einstein metric.

The following corollary, also proved in \cite{m16}, is a straightforward consequence that will also be of use to us.

\begin{corollary}
  \label{polarcor}
  Let $(\tX,\tM,\tQ,\tS)$, $(X,M,Q)$, and $g$ be as in Theorem \ref{normformthm}, with again a constant angle
  $\theta_0$ between $Q$ and $M$, and let $[k]$ be the conformal class induced on $\tS \cap \tM$ (thus on $S$
  by any compactification of $g$.
  For any $k \in [k]$ and for sufficiently small
  $\varepsilon > 0$, there is a neighborhood $\tU$ of $\tS$ in $\tX$
  and a unique diffeomorphism $\chi:[0,\theta_0]_{\theta} \times S \times [0,\varepsilon)_{\rho} \to \tU$ such that
  $b\circ\chi|_{ \left\{ 0 \right\}\times S \times \left\{ 0 \right\}} = \id_S$ and
    \begin{equation}
      \chi^*g = \frac{d\theta^2 + h_{\theta}}{\sin^2\theta},
    \end{equation}
    where $h_{\theta}$ is a smooth one-parameter family of smooth AH metrics on $S \times [0,\varepsilon)$ with
    \begin{equation*}
      h_0 = \frac{d\rho^2 + k_{\rho}}{\rho^2},
    \end{equation*}
    where $k_{\rho}$ is a smooth one-parameter family of smooth metrics on $S$ with $k_0 = k$, and where 
    $\tM = \chi(\left\{ \theta = 0 \right\})$, $\tQ = \chi(\left\{ \theta = \theta_0 \right\})$, and
    $\tS = \chi(\left\{ \rho = 0 \right\})$. Moreover, $\partial_{\theta}(\bar{h}_{\theta}|_{\rho = 0}) = 0$,
    where $\brh = \rho^2h$.
\end{corollary}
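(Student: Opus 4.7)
The plan is to obtain Corollary \ref{polarcor} by composing the normal form of Theorem \ref{normformthm} with the standard geodesic normal form theorem for asymptotically hyperbolic metrics applied to the induced AH metric $h_0$ on $(W,S)$.

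First, I would apply Theorem \ref{normformthm} to obtain the neighborhood $W$ of $S$ in $M$ and the unique diffeomorphism $\zeta:[0,\theta_0]_\theta \times W \to \tU_0$ with $\zeta^*g = \csc^2(\theta)(d\theta^2 + h_\theta)$, where $h_\theta$ is a smooth one-parameter family of smooth AH metrics on $(W,S)$. In particular, $h_0$ is a smooth AH metric on $W$, whose conformal infinity on $S$ is precisely $[k]$.

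Second, I would invoke the standard AH normal form result (the Graham--Lee-type lemma used throughout the AH literature): given the representative $k \in [k]$, there exist a unique $\varepsilon > 0$ (after possibly shrinking $W$) and a unique diffeomorphism $\eta: S \times [0,\varepsilon)_\rho \to W'$ onto a neighborhood of $S$ in $W$ such that $\eta|_{S \times \{0\}} = \id_S$ and
\begin{equation*}
  \eta^*h_0 = \frac{d\rho^2 + k_\rho}{\rho^2},
\end{equation*}
for a smooth family $k_\rho$ of metrics on $S$ with $k_0 = k$. The defining function $\rho$ is the unique one, near $S$, for which $|d\rho|^2_{\rho^2 h_0} \equiv 1$ and $\rho^2 h_0|_{TS} = k$, obtained by integrating the gradient flow of the normalized distance function for $\rho^2 h_0$.

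Third, define $\chi = \zeta \circ (\id_{[0,\theta_0]} \times \eta)$, restricted to $[0,\theta_0]_\theta \times S \times [0,\varepsilon)_\rho$. This is manifestly a diffeomorphism onto a neighborhood $\tU$ of $\tS$ in $\tX$, and pulling back yields
\begin{equation*}
  \chi^*g = \csc^2(\theta)\bigl(d\theta^2 + (\id \times \eta)^* h_\theta\bigr),
\end{equation*}
which gives the desired form after renaming $(\id \times \eta)^* h_\theta$ as $h_\theta$. The value at $\theta = 0$ is $\eta^*h_0$, which is in AH normal form by step two. The boundary identifications $\tM = \chi(\{\theta=0\})$, $\tQ = \chi(\{\theta = \theta_0\})$, and $\tS = \chi(\{\rho = 0\})$ follow immediately from the corresponding properties of $\zeta$ and the fact that $\eta$ sends $\{\rho = 0\}$ onto $S$. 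The condition $\partial_\theta(\bar{h}_\theta|_{\rho=0}) = 0$ is inherited directly from Theorem \ref{normformthm}, since $\rho$ is a defining function for $S$ in $W'$ and that theorem asserts the vanishing for any such defining function. Uniqueness of $\chi$ follows from the uniqueness of $\zeta$ in Theorem \ref{normformthm} together with the uniqueness of $\eta$ in the AH normal form, given the normalization $b\circ\chi|_{\{0\}\times S \times \{0\}} = \id_S$.

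The only nontrivial step is the second one, and it is entirely standard; the rest is bookkeeping. No genuine obstacle arises, as the corollary is essentially a composition of two existing normal form results adapted to the two boundary hypersurfaces of $\tX$.
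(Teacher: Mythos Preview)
Your proposal is correct and matches what the paper indicates: the paper does not give a proof here but cites \cite{m16} and calls the corollary ``a straightforward consequence'' of Theorem \ref{normformthm}, which is exactly the composition you describe---apply Theorem \ref{normformthm}, then put the induced AH metric $h_0$ into Graham--Lee normal form relative to the chosen representative $k$, and compose the two diffeomorphisms.
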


Finally, it will be helpful\label{confclassrev} to review the relationship between two types of conformal classes
on a manifold with boundary. Let $M$ be a manifold with boundary $S$. The first type is the usual conformal class,
$[\tau]$, where $\tau$ is a smooth metric on $M$. Here, $[\tau]$ is the family of metrics $\tau'$ such that
$\tau' = \Omega^2\tau$ for some nonvanishing $\Omega \in C^{\infty}(M)$.
The second type is an AH conformal class, $[h]$, where $h$ is an AH metric on $M$. Here, $[h]$ is precisely the
set of metrics $\Psi^2h$, where $\Psi \in C^{\infty}(M)$ is nonvanishing and $\Psi|_{S} \equiv 1$.

Observe that there is a one-to-one correspondence between ordinary conformal classes $[\tau]$ and
AH conformal classes $[h]$. Given a conformal class $[\tau]$, let $\tau \in [\tau]$ and let
$\varphi \in C^{\infty}(M)$ be any defining function for $S$ in $M$ such that $|d\varphi|_{\tau} = 1$ along $S$.  Set
$h = \varphi^{-2}\tau$. Then the conformal class
$[h]$ is independent of the choices of $\varphi$ and of $\tau$. To see this, suppose $\psi$ is some other
defining function satisfying $|d\psi|_{\tau} = 1$ along $S$. Then $\psi^{-2}\tau = \psi^{-2}\varphi^2(\varphi^{-2}\tau)$. But
$\Psi = \psi^{-2}\varphi^2$ extends smoothly to all of $M$, and $\Psi|_{S} \equiv 1$ by the choice of $\varphi$, $\psi$.
Thus $[h]$ does not depend on $\varphi$. Similarly, suppose $\tau' = \Omega^{2}\tau$, and let $\varphi'$ be a defining function for $S$ such
that $|d\varphi'|_{\tau'} = 1$ along $S$. Then it is easy to check that if $\varphi = \Omega^{-1}\varphi'$, then
$|d\varphi|_{\tau} = 1$ along $S$; and $\varphi^{-2}\tau = (\varphi')^{-2}\tau'$. Thus the map taking $[\tau]$ to $[h]$ is well-defined.
It is an easy exercise to reverse these steps and show that it is a bijection.

\subsection*{Notation}
Throughout, $X$ will be of dimension $n + 1$, where unless otherwise stated, $n \geq 2$.
We use index notation in polar coordinates (such as those given by Theorem \ref{normformthm}). When doing so, except where stated otherwise, we let
$\rho$ be a special defining function for $S$ in $M$ corresponding to $h_0$, i.e., a function
such that $h_0 = \frac{d\rho^2 + k_{\rho}}{\rho^2}$ (see \cite{gl91}). Then the space $[0,\theta_0] \times W$ given by our normal-form theorem
decomposes into a product $[0,\theta_0]_{\theta} \times S \times [0,\varepsilon)_{\rho}$. The coordinate index
$0$ will refer to the first factor $\theta$, and $n$ will refer to the last factor $\rho$. The indices
$1 \leq s,t \leq n - 1$ will refer to local coordinates on $S$, while the indices $0 \leq i, j \leq n$ will run over all $n + 1$ coordinates.
Finally, $1 \leq \mu,\nu \leq n$ will run over $S$ and $\rho$.

The metric $g$ will be used to raise and lower indices, except that $g^{ij}$ is the inverse metric, and so also for $h^{ij}$ and any other metrics
with raised indices. We write $\bar{g} = \rho^2\sin^2(\theta)g$
and $\bar{h}_{\theta} = \rho^2h_{\theta}$, where again $h_{\theta}$ is as in Theorem \ref{normformthm}. Note that $\bar{g}$ is degenerate along
$\tS$.

If $A$ is a covariant $k$-tensor, we write $A = O_g(f)$ to indicate that $|A|_g = O(f)$; or equivalently, that if
$Y_1,\dots,Y_k$ are $g$-unit vector fields, then the condition $A(Y_1,\dots,Y_k) = O(f)$ holds with constant independent of the $Y_i$.
Similarly, if $Y$ is a vector field, we write $Y = O_g(f)$ to indicate $|Y|_g = O(f)$. Note that this condition 
is independent of the particular admissible metric $g$.

\section{Compatibility Conditions for Smooth Solutions}\label{smoothsec}

In this section, we prove basic results about Einstein spaces satisfying our
boundary condition $K_Q = \lambda g|_{T\tQ}$, such as the fact that $\lambda \in (-1,1)$ necessarily. We also study
compatibility conditions that are imposed on the corner $S$ if the Einstein metric $g$ is
to have smooth compactification. These prove to be severe with the given boundary conditions.

Suppose a manifold $M^n$ with boundary $S$ is given, and is equipped with a Riemannian conformal class
$[h]$, which throughout this section will be taken to be a smooth conformal class. In this section, we will investigate, if $(M,[h])$ is to be the conformal infinity of a \emph{smooth}
CAH Einstein space $(X,M,Q,g_+)$ satisfying $K_Q = \lambda g|_{TQ}$, what we can deduce from $M$ and $S$ about the developments of
$g$ and $Q$; and what constraints are put on $S$. Because we will frequently want
to use indices on $g_+$, and no blowups occur, for convenience we break in this section with the notation used in the rest of this paper
and write $g = g_+$. Elsewhere, $g$ will remain an admissible metric on $\tX$. Our use of indices will also vary slightly from
the remainder of the paper, as we describe below.
As mentioned in the introduction, this situation has been 
analyzed to first order in \cite{ntu12} with the \emph{a priori} assumption that $g$
is the hyperbolic metric. We remove this assumption to investigate the general case. In this section, we will work directly on
$(X,M,Q)$ and not the blowup.

First, we state a classical result.

\begin{theorem}
  \label{umbthm}
  Let $n \geq 2$, and let
  $Q^n \subset \mathbb{H}^{n + 1}$ be an umbilic hypersurface. 
  Then the umbilic coefficient $\lambda_p = \lambda$ is the same at all points $p \in Q$.
  Moreover, as a subset of $\mathbb{R}^{n + 1}$, $Q$ is either part of a sphere or part of a hyperplane. In particular,
  $Q$ falls into one of the following classes:
  \begin{enumerate}[(i)]
    \item $Q$ is part of a geodesic sphere. In this case, $Q$ is part of a Euclidean sphere entirely contained in
      $\mathbb{H}^{n + 1}$. In this case, $|\lambda| > 1$;
    \item $Q$ is a horosphere: either it is part of a Euclidean sphere contained entirely in $\mathbb{H}^{n + 1}$ except for one
      point, which lies on the boundary $\mathbb{R}^{n}$; or it is part of a Euclidean
      plane contained in $\mathbb{H}^{n + 1}$ and parallel to $\mathbb{R}^n$. In either case, $|\lambda| = 1$;
    \item $Q$ is totally geodesic. In this case, $M$ is either part of a Euclidean hemisphere that meets the boundary 
      $\mathbb{R}^n$ normally, or part of a Euclidean hyperplane that meets the boundary normally. In either event,
      $\lambda = 0$; or
    \item $Q$ is part of a Euclidean sphere or Euclidean hyperplane that intersects the boundary
      $\mathbb{R}^n$ non-normally. In the case of a sphere, $M$ lies in the upper part
      of a sphere of radius $r$, such that its center $(y,x)$ satisfies
      $0 < |y| < r$. Either case is called an equidistant hypersurface,
      because it lies at fixed distance from the totally geodesic hypersurface $S$ such that
      the intersection of $S$ with $\mathbb{R}^n$ is the same as that of the plane or sphere in which $M$ lies.
      In this situation, $0 < |\lambda| < 1$.
  \end{enumerate}
\end{theorem}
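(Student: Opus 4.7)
The plan is to reduce the theorem to the classical Euclidean classification of umbilic hypersurfaces, via the conformal invariance of umbilicity combined with the upper half-space model of $\mathbb{H}^{n+1}$.

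First I would establish that $\lambda_p$ is constant on $Q$ by invoking the Codazzi equation. Since $\mathbb{H}^{n+1}$ has constant sectional curvature, the ambient Riemann tensor, evaluated on three vectors tangent to $Q$ and one normal vector, vanishes, so the Codazzi equation reduces to $\nabla_i h_{jk} = \nabla_j h_{ik}$. Substituting $h_{ij} = \lambda g_{ij}$ gives $(\nabla_i \lambda) g_{jk} = (\nabla_j \lambda) g_{ik}$, and tracing over $j,k$ with $g^{jk}$ yields $(n-1)\nabla_i \lambda = 0$. Hence $\lambda$ is constant, using $n \geq 2$.

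Next I would pass to the upper half-space model $\mathbb{H}^{n+1} = \{(y,x) : y > 0\}$ with $g_+ = y^{-2} g_0$, where $g_0$ is the Euclidean metric, and exploit that umbilicity is a conformally invariant condition. Under a conformal change $\hat g = e^{2\phi} g$, the second fundamental form of a hypersurface transforms as $\hat h = e^\phi(h - \nu(\phi)\,g)$, so $h = \lambda g$ forces $\hat h = \hat\lambda \hat g$ with $\hat\lambda = e^{-\phi}(\lambda - \nu(\phi))$. Taking $\phi = -\log y$ (so $\nu(\phi) = -\nu^y/y$) yields the explicit relation
\begin{equation*}
  \lambda = y\,\tilde\lambda + \nu^y,
\end{equation*}
where $\tilde\lambda$ is the Euclidean umbilic coefficient of $Q$ and $\nu^y$ the $y$-component of the Euclidean unit normal. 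Since umbilic Euclidean hypersurfaces are classically known to be pieces of round spheres or hyperplanes, $Q$ must be (a piece of) a Euclidean sphere or hyperplane contained in the upper half-space.

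Finally I would enumerate how such a sphere or plane can sit relative to the boundary $\{y = 0\}$ and read off $\lambda$ in each case. For a Euclidean sphere of radius $r$ centered at $(y_0, x_0)$ with outward normal, one has $\tilde\lambda = -1/r$ and $\nu^y = (y - y_0)/r$, yielding the constant $\lambda = -y_0/r$; this gives $|\lambda| > 1$ when $y_0 > r$ (geodesic sphere), $|\lambda| = 1$ when $y_0 = r$ (horosphere tangent to the boundary), $|\lambda| \in (0,1)$ when $0 < y_0 < r$ (equidistant hypersurface), and $\lambda = 0$ when $y_0 = 0$ (totally geodesic hemisphere meeting the boundary normally). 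For a hyperplane, $\tilde\lambda = 0$, so $\lambda = \nu^y$, producing the horizontal horosphere ($|\nu^y| = 1$), the totally geodesic vertical hyperplane ($\nu^y = 0$), or a tilted equidistant hyperplane ($0 < |\nu^y| < 1$). The main labor is bookkeeping—tracking signs and orientations in the conformal transformation formula, and checking exhaustiveness of the enumeration—but the conceptual content is genuinely simple: conformal invariance of umbilicity reduces the hyperbolic problem to the classical Euclidean one.
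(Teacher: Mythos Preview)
Your argument is correct and is essentially the standard proof of this classical result: Codazzi for constancy of $\lambda$, conformal invariance of umbilicity to reduce to the Euclidean classification, and then a case analysis according to the position of the sphere or plane relative to $\{y=0\}$. There is nothing to compare against, however, because the paper does not prove this theorem at all; it simply quotes it as a classical fact and refers the reader to Spivak \cite[chap.~IV.7]{spi99} for a proof. So you have supplied a proof where the paper supplies only a citation. A minor remark: be careful with the sign convention in the conformal transformation law for the second fundamental form (and with the orientation of the normal), since this is where most slip-ups occur; your computation $\lambda = -y_0/r$ for the sphere case and $\lambda = \nu^y$ for the plane case is correct up to an overall sign depending on whether one uses the inward or outward normal, and the case enumeration is exhaustive once one also allows $y_0 < 0$ (which still gives an equidistant hypersurface with $0 < |\lambda| < 1$ when $|y_0| < r$).
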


See \cite[chap. IV.7]{spi99} for a discussion and proof. Now this theorem implies that if an umbilic hypersurface $Q$ in hyperbolic space
intersects the boundary $M = \mathbb{R}^n$ at infinity, the intersection $S$ must be a hyperplane or a sphere. This duplicates the result found in
\cite{ntu12}, although it depends on global geometry. On the other hand, and unlike the finding of that paper, this shows that the result
must hold even for $n = 2$.

We next prove a lemma in a more general context.

\begin{lemma}
  \label{subint}
  Let $(X,g)$ be a Riemannian manifold with a smooth metric $g$, and with embedded hypersurfaces $Q$ and $M$ that intersect
  transversely in an embedded submanifold $S$. Denote by $K$ the scalar second fundamental form with respect to a fixed
  unit normal vector.
  By $K_S$, we will mean the second fundamental form of $S$
  considered as a submanifold of $M$, while $K_M$ and $K_Q$ will
  mean the second fundamental forms of $M$ and $Q$ as submanifolds of $X$. Then

  \begin{equation}
    \label{subinteq}
    K_S =  \left.\frac{K_Q - \langle \nu_Q,\nu_M\rangle K_M}{\langle\nu_Q,\nu_S\rangle}\right|_{TS},
  \end{equation}
  where $\nu_S$ is the unit $g$-normal to $S$ in $M$, and $\nu_Q$ and $\nu_M$ are the unit $g$-normal vectors to $Q$ and
  $M$ in $X$.
\end{lemma}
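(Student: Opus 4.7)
The plan is to express everything in terms of the ambient Levi-Civita connection $\nabla$ on $X$ and the orthonormal decomposition of the rank-two normal bundle $TS^\perp$ in $TX|_S$.

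First, I would observe that for tangent vectors $X,Y \in TS$, the submanifold second fundamental form $K_S$ can be computed using the ambient connection: by the Gauss formula applied to $S \subset M$, $K_S(X,Y) = \langle \nabla^M_X Y, \nu_S\rangle$ where $\nabla^M$ is the Levi-Civita connection of $M$. But $\nabla^M_X Y = \nabla_X Y - K_M(X,Y)\,\nu_M$, and since $\nu_S \in TM$ is $g$-orthogonal to $\nu_M$, the correction term drops out, giving $K_S(X,Y) = \langle \nabla_X Y, \nu_S\rangle$. Likewise $K_M(X,Y) = \langle \nabla_X Y,\nu_M\rangle$ and $K_Q(X,Y) = \langle \nabla_X Y,\nu_Q\rangle$ for $X,Y\in TS \subset TM\cap TQ$.

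Next, I would set up the decomposition. Since $S$ has codimension two in $X$, the normal bundle $TS^\perp \subset TX|_S$ is rank two, and the pair $(\nu_M,\nu_S)$ is an orthonormal frame for it (they lie in $TS^\perp$ and are mutually orthogonal as noted above). The vector $\nu_Q$ is orthogonal to $TQ \supset TS$, hence lies in $TS^\perp$, so we may write
\begin{equation*}
  \nu_Q \;=\; \langle \nu_Q,\nu_M\rangle\,\nu_M \;+\; \langle \nu_Q,\nu_S\rangle\,\nu_S.
\end{equation*}
Substituting into $K_Q(X,Y) = \langle \nabla_X Y,\nu_Q\rangle$ and using the two identifications from the previous step yields
\begin{equation*}
  K_Q(X,Y) \;=\; \langle \nu_Q,\nu_M\rangle\,K_M(X,Y) \;+\; \langle \nu_Q,\nu_S\rangle\,K_S(X,Y)
\end{equation*}
for $X,Y \in TS$, and solving for $K_S$ gives the claimed formula.

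The only point needing care is that the division by $\langle \nu_Q,\nu_S\rangle$ is permissible. This follows from the transversality hypothesis: if $\langle \nu_Q,\nu_S\rangle$ vanished at some point of $S$, then $\nu_Q$ would be a unit multiple of $\nu_M$ there, forcing $T_pQ = T_pM$ and contradicting transverse intersection. So this is really a one-line consequence of transversality and poses no substantive obstacle; the entire argument is a short linear-algebra manipulation combined with the Gauss formula, and the ``main'' step is simply identifying the correct orthonormal frame of $TS^\perp$ and recognizing that $\nu_Q$ lies in it.
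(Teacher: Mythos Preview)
Your proof is correct and follows essentially the same approach as the paper: both decompose $\nu_Q$ in the orthonormal frame $(\nu_M,\nu_S)$ of $TS^\perp$, identify each scalar second fundamental form as $\langle \nabla_X Y,\cdot\rangle$ with the appropriate normal, and solve the resulting linear relation. Your version is slightly more streamlined in that it works directly with the scalar forms rather than passing through the vector second fundamental forms and projecting, but the substance is identical.
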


\begin{proof}
  Denote by $II$ the vector second fundamental form, with the same conventions as for $K$ in the statement:
  so, for example, $II_S$ is the vector second fundamental form of $S$ as a submanifold of $(M,g|_{TM})$.
  We compute each second fundamental form. We will use $P$ to denote
  an orthogonal projection operator, while
  $\nabla$ will denote the Levi-Civita connection on $X$. 
  First, for $p \in S$ and $X, Y \in
  T_pS$, extended smoothly to a neighborhood,
  \begin{align*}
    II_S(X,Y) &= P_{TS^{\bot}}P_{TM}\nabla_XY\\
    &= \langle \nabla_XY,\nu_S\rangle \nu_S.
  \end{align*}
  Next,
  \begin{align*}
    II_M(X,Y) &= P_{TM^{\bot}}\nabla_XY\\
    &= \langle \nabla_XY,\nu_M\rangle\nu_M.
  \end{align*}
  Now write $\nu_Q = \nu_Q^S\nu_S + \nu_Q^M\nu_M$ (such a decomposition
  must be possible at $S$ since $TS \subset TQ$). Then we have
  \begin{align*}
    II_Q(X,Y) &= \langle \nabla_XY,\nu_Q\rangle\nu_Q\\
    &= (\nu_Q^S\langle\nabla_XY,\nu_S\rangle + \nu_Q^M\langle
    \nabla_XY,\nu_M\rangle)(\nu_Q^S\nu_S + \nu_Q^M\nu_M)\\
    &= (\nu_Q^S)^2 \langle \nabla_XY,\nu_S\rangle\nu_S + \nu_Q^S\nu_Q^M(
    \langle \nabla_XY,\nu_S\rangle\nu_M +
    \langle \nabla_XY,\nu_M\rangle\nu_S)\\
    &\qquad+ (\nu_Q^M)^2 \langle \nabla_XY,\nu_M\rangle\nu_M.\\
    \intertext{Hence,}
    \langle II_Q(X,Y), \nu_S\rangle &= (\nu_Q^S)^2\langle \nabla_XY,\nu_S\rangle
    + \nu_Q^S\nu_Q^M \langle \nabla_XY,\nu_M\rangle\\
    &= \langle\nu_Q,\nu_S\rangle^2 \langle II_S(X,Y),\nu_S\rangle\\
    \\&\quad+ \langle \nu_Q,\nu_S\rangle \langle \nu_Q,\nu_M\rangle \langle
    K_M(X,Y),\nu_M\rangle,
  \end{align*}
  where the last line follows from our prior computations. Now since
  $M$ and $Q$ are transverse, $\langle \nu_Q,\nu_S\rangle \neq 0$.
  Moreover, $\langle II_Q(X,Y),\nu_S\rangle = K_Q(X,Y) \langle \nu_Q,\nu_S
  \rangle$. Thus, we find
  \[K_Q(X,Y) = \langle \nu_Q,\nu_S\rangle K_S(X,Y) + \langle \nu_Q,\nu_M\rangle
  K_M(X,Y),\]
  which yields the claim.\
\end{proof}

We now continue in the CAH Einstein context, where we get an immediate corollary from the preceding result.

\begin{corollary}
  \label{sumbcor}
  Let $(M,[h])$ be a compact manifold with boundary $S$, equipped with a conformal class $[h]$.
  Suppose $(X,M,Q,g)$ is a cornered AH Einstein space, such that
  $K_{Q} = \lambda g|_{TQ}$, and with smooth conformal infinity $[h]$.
  Further, suppose that, for smooth defining functions, the compactified metric $\bar{g}$ is smooth.
  Then $S$ is umbilic in $M$ with respect to any metric $h \in [h]$.
\end{corollary}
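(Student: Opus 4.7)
The plan is to apply Lemma \ref{subint} with respect to a smooth conformal compactification $\bar g = \varphi^2 g$ of $g$, combining two ingredients: the conformal invariance of umbilicity of $Q$, and the classical vanishing of the second fundamental form of $M$ with respect to $\bar g$ in AH Einstein normal form.

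First, I would invoke the Graham--Lee special defining function lemma to choose $\varphi$ with $|d\varphi|_{\bar g} = 1$ near $M$, giving in a collar of $M$ the geodesic normal form $\bar g = d\varphi^2 + h_\varphi$ with induced boundary metric $h := h_0 \in [h]$. For an AH Einstein metric with smooth compactification, the Fefferman--Graham expansion forces $\partial_\varphi h_\varphi|_{\varphi = 0} = 0$; equivalently, the second fundamental form $K_M^{\bar g}$ of $M$ with respect to $\bar g$ vanishes identically along $M$, and by continuity of $\bar g$ up to the corner, this vanishing extends to $S$.

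Next, umbilicity of a hypersurface is a conformally invariant property. A direct computation with the conformal transformation of the Christoffel symbols shows that if $K^g = \alpha\, g|_{TQ}$, then under $\bar g = \Omega^2 g$ one has
\[
K^{\bar g} = \Omega^{-1}\bigl(\alpha - \nu^g(\log \Omega)\bigr)\, \bar g|_{TQ},
\]
so $Q$ remains umbilic with respect to $\bar g$; in particular $K_Q^{\bar g} = \tilde\lambda\, \bar g|_{TQ}$ for some function $\tilde\lambda$ on $Q$. Applying Lemma \ref{subint} to the triple $(Q, M, S)$ with the metric $\bar g$ and restricting to $S$ then yields
\[
K_S^h = \frac{K_Q^{\bar g} - \langle \nu_Q, \nu_M\rangle_{\bar g}\, K_M^{\bar g}}{\langle \nu_Q, \nu_S\rangle_{\bar g}}\bigg|_{TS} = \frac{\tilde\lambda}{\langle \nu_Q, \nu_S\rangle_{\bar g}}\, h|_{TS},
\]
using $K_M^{\bar g}|_S = 0$ and $\bar g|_{TS} = h|_{TS}$; the denominator is nonzero by transversality of $M$ and $Q$. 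Thus $S$ is umbilic in $(M, h)$ for this distinguished representative. Applying the same conformal invariance of umbilicity once more to any rescaling $h' = \Psi^2 h$ with $\Psi|_S = 1$ shows $S$ is umbilic with respect to every representative of $[h]$.

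The hard step is not any explicit calculation but isolating the right ingredients: Lemma \ref{subint} handles the linear algebra at the corner, conformal invariance of umbilicity transfers the hypothesis on $Q$ from $g$ to $\bar g$, and the serious geometric input is the vanishing of $K_M^{\bar g}$, which uses the Einstein condition in an essential way through the Fefferman--Graham normal form. Without the Einstein hypothesis, umbilicity of $Q$ would not propagate to umbilicity of $S$.
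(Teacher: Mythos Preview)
Your proof is correct and follows essentially the same approach as the paper: both invoke conformal invariance of umbilicity to pass the hypothesis on $Q$ from $g$ to $\bar g$, use the Einstein condition (via the geodesic defining function and Fefferman--Graham expansion) to conclude that $M$ is $\bar g$-totally geodesic, and then apply Lemma \ref{subint} to deduce umbilicity of $S$ in $(M,h)$. The restriction $\Psi|_S = 1$ in your final step is unnecessary --- umbilicity is invariant under arbitrary conformal rescalings, and here $[h]$ is the ordinary (smooth) conformal class --- but this does not affect correctness.
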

\begin{proof}
  It certainly suffices to consider only $h$, as umbilicity is a conformally invariant condition.
  For the same reason, it follows that $Q$ is $\bar{g}$-umbilic. Let $r$ be a geodesic defining function for $M$,
  as in \cite{gl91}, and $\bar{g} = r^2g$. Because $g$ is a CAH Einstein
  metric, $M$ is $\bar{g}$-totally geodesic (see e.g. \cite{g99}). Thus, since $h = \bar{g}|_{TM}$ (where $r$ is an appropriate
  defining function), the claim follows directly from (\ref{subinteq}).\
\end{proof}

This corollary gives a substantial obstruction to the existence of a smooth CMC-umbilically cornered CAH Einstein
space realizing $(M,[h])$ as its conformal infinity. For $n > 2$, for example, it provides a proof relying only on
the boundary geometry
that, if $\widetilde{X}$ is hyperbolic space $\mathbb{H}^{n + 1}$ and $M$ is a subset of $\mathbb{R}^n$,
then the boundary $S$ of $M$ must be a sphere or a hyperplane, as these are the only umbilic surfaces in Euclidean
space. (This proof does not work if $n = 2$, since every hypersurface of a 2-space is umbilic.)
We may even further characterize the geometry near the boundary as we further expand the
umbilic condition.

First we define some helpful coordinates.
Fix a metric $h \in [h]$. On a neighborhood $V$ near a point of $S$ in $M$, we may always choose geodesic normal
coordinates $x^1,\dots,x^n$ such that $S = \left\{ x^n = 0 \right\}$ and
such that $\frac{\partial}{\partial x^n} \perp_h TS$, with
$\left|\frac{\partial}{\partial x^n}\right| = 1$ and
$\frac{\partial}{\partial x^n} (\langle \frac{\partial}{\partial x^n},\frac{\partial}{\partial x^n}\rangle)
= 0$ at $S$. Let $r$ be a geodesic defining function
for $M$, so that $h = r^2g|_{TM}$, such that $|dr|^2_{\bar{g}} = 1$, 
and such that $x^0 = r,x^1,\dots,x^n$ are
coordinates for some $X \supseteq U \simeq [0,\varepsilon) \times V$, where
$\frac{\partial}{\partial r} \perp_{\overline{g}} TM$.

When working with these coordinates, we will use the Roman indices
$0 \leq i, j, k \leq n$ to label coordinates on $X$; the Greek indices
$0 \leq \alpha, \beta, \gamma \leq n - 1$ to label coordinates on $Q$; and the Roman indices
$1 \leq s,t,u \leq n - 1$ to label coordinates on $S$.

Before continuing to explore the consequences of smoothness, we state a useful result that is
true more generally.

\begin{proposition}
  \label{smoothprop}
  Let $(X,M,Q)$ be a cornered space, and let $g$ be a smooth CAH Einstein metric on $X$ satisfying
  $K_{Q} = \lambda g|_{TQ}$, where $K_Q$ is the second fundamental form of $Q$ with respect
  to the inward-pointing normal vector.
  Let $\bar{\nu}_M$ be
  the $X$-inward $\bar{g}$-unit normal to $M$, and let $\bar{\nu}_S$ be the 
  $M$-inward $h$-unit normal to $S$ in $M$. 
  Then at every point $p \in S$, $\cos(\theta_0(p)) = -\lambda$. 
  In particular, $\lambda \in (-1,1)$.
\end{proposition}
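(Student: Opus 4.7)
The plan is to relate the $g$-umbilicity condition $K_Q = \lambda g|_{TQ}$ to the scalar second fundamental form $\bar{K}_Q$ of $Q$ in a smooth compactification $\bar{g} = \varphi^2 g$, where $\varphi$ is a geodesic defining function for $M$ with $|d\varphi|_{\bar{g}} = 1$ along $M$. Since $g$ is a smooth CAH metric, $\bar{g}$ is a smooth Riemannian metric on all of $X$, so $\bar{K}_Q$ extends smoothly across the corner $S$. The idea is that this finiteness, combined with the umbilicity relation translated into $\bar{g}$, will pin down $\bar{\nu}_Q(\varphi)|_S$ and thereby force the asserted relation between $\lambda$ and $\theta_0$.

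Concretely, I would first carry out the conformal-change computation for the scalar second fundamental form. Using the standard transformation $\bar{\nabla}_X Y = \nabla_X Y + X(\log\varphi)\,Y + Y(\log\varphi)\,X - g(X,Y)\,\nabla^{g}\log\varphi$ together with the identity $\nu_Q = \varphi\,\bar{\nu}_Q$ relating the $g$- and $\bar{g}$-unit inward normals to $Q$, a direct calculation for $X, Y \in TQ$ gives
\begin{equation*}
\bar{K}_Q(X, Y) \;=\; \varphi^{-1}\bigl(\lambda - \bar{\nu}_Q(\varphi)\bigr)\,\bar{g}(X, Y).
\end{equation*}
Since the left-hand side extends smoothly (hence is at least finite) as $\varphi \to 0$, we must have $\bar{\nu}_Q(\varphi)|_S = \lambda$.

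To complete the proof, I would then compute $\bar{\nu}_Q(\varphi)|_S$ geometrically. At a point $p \in S$, the $\bar{g}$-orthogonal complement of $T_pS$ in $T_pX$ is two-dimensional and is spanned by the inward-pointing unit normals $\bar{\nu}_M$ (normal to $M$ in $X$) and $\bar{\nu}_S$ (normal to $S$ in $M$, which is automatically tangent to $M$). In this 2-plane, the definition of $\theta_0(p)$ as the interior angle at the corner, together with the inward-pointing conventions, yields
\begin{equation*}
\bar{\nu}_Q \;=\; \sin\theta_0\,\bar{\nu}_S \;-\; \cos\theta_0\,\bar{\nu}_M.
\end{equation*}
Now $\bar{\nu}_S \in TM$ and $\varphi|_M \equiv 0$ give $\bar{\nu}_S(\varphi)|_S = 0$, while the CAH normalization gives $\bar{\nu}_M(\varphi)|_M = 1$. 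Hence $\bar{\nu}_Q(\varphi)|_S = -\cos\theta_0$, so $\cos\theta_0 = -\lambda$, and since $\theta_0 \in (0,\pi)$ this forces $\lambda \in (-1, 1)$.

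The main obstacle is really bookkeeping rather than anything conceptual: one must be careful to verify the sign conventions in the decomposition of $\bar{\nu}_Q$ at the corner (since $X$ sits on a specific side of both $M$ and $Q$, with the angle measured from inside $X$), and to confirm that the inward normal used to define $K_Q$ is consistent with the one used to compute $\bar{K}_Q$ as $S$ is approached from within $Q$. The conformal transformation itself is a routine calculation.
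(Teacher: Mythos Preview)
Your proposal is correct and follows essentially the same route as the paper: both compute the conformal change of the second fundamental form to obtain $\bar{K}_Q = \varphi^{-1}(\lambda - \bar{\nu}_Q(\varphi))\,\bar{g}|_{TQ}$, use smoothness of $\bar{K}_Q$ at $S$ to force $\bar{\nu}_Q(\varphi)|_S = \lambda$, and then identify $\bar{\nu}_Q(\varphi)|_S$ with $-\cos\theta_0$. The only cosmetic difference is that the paper records this last identification directly as $\langle \bar{\nu}_Q,\bar{\nu}_M\rangle_{\bar g}=\lambda$ together with $\cos\theta_0=-\langle \bar{\nu}_Q,\bar{\nu}_M\rangle_{\bar g}$, whereas you decompose $\bar{\nu}_Q$ in the orthonormal frame $(\bar{\nu}_S,\bar{\nu}_M)$; these are the same computation.
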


Notice that the proof given here depends only on the continuity of $K_Q$ up to $S$; thus, by
Lemma 4.3 of \cite{m16}, this proposition remains true if $g$ is only admissible.

\begin{proof}
Let $\nu$ be the inward-pointing unit normal field on $Q$, and $K$ the second fundamental form of $Q$,
both with respect to $g$; and let $\bar{\nu} = \bar{\nu}_Q$ be the inward-pointing unit normal field
on $Q$ with respect to $\bar{g}$. Now the umbilic condition
is equivalent by Weingarten to
\[\langle \nabla_X \nu,Y \rangle_g = -\lambda \langle X,Y\rangle_g\]
for all $X, Y \in C^{\infty}(TQ)$. For $r \neq 0$, the unit normal to $Q$
with respect to $\overline{g}$ is given by $\bnu = r^{-1}\nu$. We wish
to compute $\overline{K}(X,Y)$, the second fundamental form of $Q$ with respect to $\bar{g}$.

A straightforward computation shows that for any vector fields $X, Y$,
we have 
\begin{equation*}
  \overline{\nabla}_XY = \nabla_XY + r^{-1}\left[ dr(X)Y + dr(Y)X
- \langle X,Y\rangle_{\overline{g}}\grad_{\overline{g}} r \right].
\end{equation*}
For $q \in Q$ and $X, Y \in TQ|_q$, it follows (taking extensions where necessary) that
\begin{align*}
\overline{K}(X,Y) &= -\langle \overline{\nabla}_X (r^{-1}\nu),Y\rangle_{\overline{g}}\\
&= -r^{-1}\left\langle \nabla_X\nu + dr(X) \overline{\nu} + dr(\overline{\nu})X
- \langle X,\overline{\nu}\rangle \grad_{\bar{g}} r - dr(X)\bnu,Y\right\rangle_{\overline{g}}\\
&= r^{-1}\left( r^2K(X,Y) - dr(\overline{\nu})\langle X,Y\rangle_{\overline{g}}
\right)\numberthis\label{secformrel}.\\
\intertext{Therefore,}
\overline{K} &= \frac{\lambda - dr(\overline{\nu})}{r}\overline{g}(X,Y)\numberthis\label{baseq}
\end{align*}
is equivalent to $K_Q = \lambda g$.

Thus, we see that for $r \neq 0$, 
$Q$ is $\bar{g}$-umbilic with possibly non-constant umbilic coefficient. But we also see
that, for $\overline{K}$ to remain smooth up to the boundary -- which it surely
must, since $\bar{g}$ is a smooth metric -- we must have
\begin{equation*}
  dr(\overline{\nu}) \xrightarrow[r \to 0]{} \lambda.
\end{equation*}
In particular, since $\partial_r$ is the inward-pointing unit $\bar{g}$-normal at $M$, which we
denote by $\overline{\nu}_M$, we find that (denoting $\overline{\nu} = \overline{\nu}_Q$ for clarity)
\begin{equation}
\label{lambdeq}
\left\langle \overline{\nu}_Q,\overline{\nu}_M\right\rangle_{\overline{g}}
= \lambda,\qquad (r = 0),
\end{equation}
is equivalent to $K = \lambda g + O(r^{-1}).$
Since $\cos(\theta) = -\langle \bar{\nu}_Q,\bar{\nu}_M\rangle$, our claim is
established.
\end{proof}

As mentioned, the above result actually holds even for admissible metrics. We now obtain a result that in general
does not. We will henceforth in this section assume that inner products
are with respect to $\overline{g}$ if not otherwise specified. 

\begin{proposition}
  \label{smoothprop2}
  Let $X, M, Q, g, \lambda$, and $K_Q$ be as in Proposition \ref{smoothprop}.
  Let $\bar{\nu}_M$ be the $X$-inward $\bar{g}$-unit normal to $M$, and let $\bar{\nu}_S$ be the 
  $M$-inward $h$-unit normal to $S$ in $M$. 
  Moreover, let $\overline{R}$ be the curvature tensor of $\bar{g}$.
  Write $\bar{K}_S = \eta h$, where $\bar{K}_S$ is the second fundamental form of $S$ in $M$ with respect
  to $h$ and $\bar{\nu}_S$, and $\eta \in C^{\infty}(S)$; we can write this by Corollary \ref{sumbcor}. 
  Then for any $Z \in TS$,
  \begin{equation}
    \label{umbcoefeq}
    Z(\eta) = -\overline{R}(\bar{\nu}_M,Z,\bar{\nu}_M,\bar{\nu}_S).
  \end{equation}
\end{proposition}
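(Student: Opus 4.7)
The plan is to differentiate the umbilic identity for $Q$ (coming from the boundary condition) tangentially along $S$ using Codazzi's equation, and then reduce the resulting ambient curvature expression using the fact that $M$ is $\bar{g}$-totally geodesic.

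As setup, I would first record from the proof of Proposition~\ref{smoothprop} that $\bar{K}_Q = \mu\,\bar{g}|_{TQ}$, where $\mu = r^{-1}(\lambda - dr(\bar{\nu}_Q))$ extends smoothly to $S$. Combined with $\bar{K}_M = 0$ (since $g$ is CAH Einstein) and Lemma~\ref{subint}, this yields $\eta = \mu/\sin\theta_0$ at points of $S$; and since $\lambda$, hence $\theta_0$, is constant on $S$, computing $Z(\eta)$ reduces to computing $Z(\mu)$.

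The central step is to apply the Codazzi equation to the umbilic hypersurface $Q\subset X$: for $U,V,W\in TQ$,
\[
U(\mu)\bar{g}(V,W) - V(\mu)\bar{g}(U,W) = \overline{R}(U,V,W,\bar{\nu}_Q).
\]
Specializing $U = Z$, $V = W = \bar{\nu}_S^Q$ (the inward $\bar{g}$-unit normal to $S$ in $Q$) gives $Z(\mu) = \overline{R}(Z,\bar{\nu}_S^Q,\bar{\nu}_S^Q,\bar{\nu}_Q)$. I would then decompose $\bar{\nu}_Q$ and $\bar{\nu}_S^Q$ in the orthonormal basis $\{\bar{\nu}_M,\bar{\nu}_S\}$ of $(TS)^\perp$ at $S$; using $\langle\bar{\nu}_Q,\bar{\nu}_M\rangle = \lambda$ from Proposition~\ref{smoothprop} and the inward-pointing conventions gives
\[
\bar{\nu}_Q = \lambda\bar{\nu}_M + \sin\theta_0\,\bar{\nu}_S, \qquad \bar{\nu}_S^Q = \sin\theta_0\,\bar{\nu}_M - \lambda\bar{\nu}_S.
\]

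Substituting and expanding multilinearly, most terms vanish by the antisymmetries of $\overline{R}$; using $\lambda^2 + \sin^2\theta_0 = 1$ pares what remains down to a linear combination of $\overline{R}(Z,\bar{\nu}_M,\bar{\nu}_M,\bar{\nu}_S)$ and $\overline{R}(Z,\bar{\nu}_S,\bar{\nu}_M,\bar{\nu}_S)$. The last step is to kill the second of these using Codazzi on $M$: since $M$ is $\bar{g}$-totally geodesic, $\overline{R}(U,V,W,\bar{\nu}_M) = 0$ whenever $U,V,W\in TM$; as $Z,\bar{\nu}_S\in TM$, antisymmetry in the last pair then forces $\overline{R}(Z,\bar{\nu}_S,\bar{\nu}_M,\bar{\nu}_S) = 0$. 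Dividing through by $\sin\theta_0$ and invoking antisymmetry in the first pair of slots of $\overline{R}$ produces the stated identity. The only nontrivial piece is the bookkeeping in the multilinear expansion; conceptually, the proof is just the combination of Codazzi on the umbilic face $Q$ with Codazzi on the totally geodesic face $M$.
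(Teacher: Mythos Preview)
Your argument is correct and considerably more direct than the paper's. Both proofs ultimately rest on Codazzi for $Q$ together with the total geodesy of $M$, but the routes differ substantially. The paper works in adapted coordinates: it writes $Q$ locally as a graph $x^n=\varphi(r,x')$, repeatedly differentiates the umbilic relation $r\bar K=(\lambda-\langle\bar\nu_Q,\partial_r\rangle)\bar g$ by $\bar\nabla^Q_{E_0}$, and after a long sequence of identities obtains $\eta=(1-\lambda^2)\partial_r^2\varphi$ and then $\partial_{x^s}\partial_r^2\varphi=-(1-\lambda^2)^{-1}\overline R_{0s0n}$ via Codazzi with $\alpha=s,\beta=0$. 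Your argument instead applies Codazzi on $Q$ \emph{directly} to the compactified umbilic identity $\bar K_Q=\mu\,\bar g|_{TQ}$, specializing to $(U,V,W)=(Z,\bar\nu_S^Q,\bar\nu_S^Q)$, and then reduces the resulting curvature term by decomposing $\bar\nu_Q,\bar\nu_S^Q$ in the orthonormal pair $\{\bar\nu_M,\bar\nu_S\}$ and invoking Codazzi on the totally geodesic $M$ to kill $\overline R(Z,\bar\nu_S,\bar\nu_M,\bar\nu_S)$. This is coordinate-free and avoids all of the intermediate graph-derivative computations.

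What the paper's longer route buys is the explicit graph data (the formulas for $\partial_r^2\varphi$ and $\partial_{x^s}\partial_r^2\varphi$), which it then reuses for the higher-order compatibility conditions derived immediately after the proposition (e.g.\ equation~(\ref{Rbareq}) and the discussion of iterated differentiation). Your approach proves the proposition itself more cleanly but does not produce that auxiliary machinery.
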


\begin{proof}
Let $\nu, K$ and $\bnu$ be as in the previous proof.
We will assume that (\ref{lambdeq}) holds, and we multiply through by $r$
in (\ref{baseq}) to obtain
\begin{equation}
r\overline{K}(X,Y) = (\lambda - \langle \overline{\nu}_Q,\partial_r\rangle)
\overline{g}(X,Y)\label{multeq}.
\end{equation}
This equation holds on $Q$ for any $X,Y \in C^{\infty}(TQ)$ if and only if
$K = \lambda g$. We will
repeatedly differentiate it covariantly to obtain new equations.

Before proceeding, we write $Q$ locally as the graph of a function,
\[x^n = \varphi(r,x^1,\dots,x^{n - 1}).\]
Notice that $\varphi(0,x^1,\dots,x^{n - 1}) \equiv 0$ by our choice of
coordinates on $M$.
We next define a local frame on $Q$ by
\[E_{\alpha} = \frac{\partial}{\partial x^{\alpha}} + 
\frac{\partial \varphi}{\partial x^{\alpha}}\frac{\partial}{\partial x^n}.\]
In particular, $\left\{ E_s \right\}_{s = 1}^{n - 1}$ is also a local frame
for $S$ at $r = 0$, in fact the coordinate frame.

Define $f \in C^{\infty}([0,\varepsilon) \times V)$ by $f = \varphi - x^n$. Then $f$ vanishes precisely on
$Q$, and we may write $\overline{\nu} = \frac{-\grad_{\overline{g}} f}{|\grad_{\overline{g}}f|}$.
Using this and the fact that, at $r = 0$, the normal $\overline{\nu}$ may be written as
\begin{equation}
  \label{bnueq}
  \bar{\nu} = \lambda \partial_r + \sqrt{1 - \lambda^2}\partial_{x^n}
\end{equation}
by (\ref{lambdeq}), it is straightforward to show that
\begin{equation}
  \left.\frac{\partial \varphi}{\partial r}\right|_{r = 0} = \frac{-\lambda}{\sqrt{1 - \lambda^2}}\label{firstder}.
\end{equation}

We intend to apply $\overline{\nabla}^Q_{E_0}$, the Levi-Civita connection of
$\bar{g}|_{TQ}$, to both sides of (\ref{multeq}). Doing this once,
and utilizing the metric property of the Levi-Civita connection, we obtain
\begin{equation}
\overline{K}_{\alpha\beta} + r\overline{\nabla}^Q_{E_0}\overline{K}_{\alpha\beta} = -E_0\left( \langle \partial_r,
\overline{\nu}\rangle \right)\overline{g}_{\alpha\beta}\label{diffonce},
\end{equation}
which should hold for all $r$, along $Q$. Taking $r = 0$, we get
\begin{align*}
  \overline{K}_{\alpha\beta} &= -E_0(\langle \partial_r,\overline{\nu}\rangle)
  \overline{g}_{\alpha\beta}\numberthis\label{firstcalcia}\\
\intertext{Now}
E_0(\langle \partial_r,\overline{\nu}\rangle) &= \langle \overline{\nabla}_{E_0}\partial_r,\overline{\nu}\rangle + \langle \partial_r,\overline{\nabla}_{E_0}\overline{\nu}\rangle\numberthis\label{firstcalci}
\end{align*}
(where $\overline{\nabla}$ is still the Levi-Civita connection for $\bar{g}$ on $TX$).
Since $E_0 = \partial_r + \frac{\partial \varphi}{\partial r}\partial_{x^n}$, we have
$\partial_r = E_0 - \frac{\partial \varphi}{\partial r}\partial_{x^n}$. Hence,
\begin{align}
  \langle \partial_r,\overline{\nabla}_{E_0}\overline{\nu}\rangle &= \langle E_0,\overline{\nabla}_{E_0}\overline{\nu}\rangle
  - \frac{\partial \varphi}{\partial r}\langle \partial_{x^n},\overline{\nabla}_{E_0}\overline{\nu}\rangle\nonumber\\
  &= -\bK(E_0,E_0) - \frac{\partial \varphi}{\partial r}\langle \partial_{x^n},\overline{\nabla}_{E_0}
  \bnu\rangle\label{firstcalcii}.
\end{align}
Moreover,
\begin{align}
  \langle \overline{\nabla}_{E_0}\partial_r,\bnu\rangle &= \langle\overline{\nabla}_{E_0}E_0,\bnu\rangle
  - \langle \overline{\nabla}_{E_0}\left( \frac{\partial \varphi}{\partial r}\partial_{x^n} \right),\bnu\rangle\nonumber\\
  &= \overline{K}(E_0,E_0) - E_0\left( \frac{\partial \varphi}{\partial r} \right)\langle \partial_{x^n},
  \bnu\rangle - \frac{\partial \varphi}{\partial r} \langle \overline{\nabla}_{E_0}\partial_{x^n},\bnu\rangle\nonumber\\
  &= \overline{K}(E_0,E_0) - \frac{\partial^2\varphi}{\partial r^2} \langle \partial_{x^n},\bnu\rangle
  - \frac{\partial \varphi}{\partial r} \langle \overline{\nabla}_{E_0} \partial_{x^n},\bnu\rangle\label{firstcalciii}.
\end{align}
From (\ref{firstcalci}),(\ref{firstcalcii}), and (\ref{firstcalciii}), we get
\begin{equation}
   E_0\langle\partial_r,\bnu\rangle = -\left( \frac{\partial^2\varphi}{\partial r^2}\langle \partial_{x^n},\bnu\rangle
  + \frac{\partial \varphi}{\partial r}E_0\langle \partial_{x^n},\bnu\rangle\right)\label{firstcalciv}.
\end{equation}
Now $1 \equiv |\bnu|_{\overline{g}}^2 = \overline{g}^{ij}\langle\bnu,\partial_{x^i}\rangle\langle\bnu,
\partial_{x^j}\rangle$.
We apply $E_0$ to this equation, take $r = 0$,
and use the facts that $TS \subset TQ$, so that $\bnu \perp_{\overline{g}}
TS$; that $\partial_{x^n}(|\partial_{x^n}|_{\overline{g}}) = 0$ on $S$; and that, because $g$ is Einstein,
$\partial_r\overline{g}|_{r = 0} = 0$ to conclude that
\begin{equation}
  \langle \partial_{x^n},\bnu\rangle E_0\langle \partial_{x^n},\bnu\rangle + \langle \partial_r, \bnu\rangle
  E_0\langle \partial_r,\bnu\rangle = 0\label{firstcalcv}.
\end{equation}
Combining this with (\ref{lambdeq}), (\ref{firstder}), (\ref{firstcalcia}), and (\ref{firstcalciv}), we finally
obtain
\begin{equation}
  \label{secondder}
  \overline{K}_{\alpha\beta} = (1 - \lambda^2)^{3/2}\left(\frac{\partial^2\varphi}{\partial r^2}\right)
  \overline{g}_{\alpha\beta} \qquad (r = 0).
\end{equation}
By applying Lemma \ref{subint}, the fact that $M$ is totally geodesic, and (\ref{bnueq}), we may conclude that
\begin{equation}
  \label{KSeq}
  \overline{K}_S = (1 - \lambda^2)\left(\frac{\partial^2\varphi}{\partial r^2}\right)\overline{g}|_{TS}.
\end{equation}
We have thus expressed the second-order term of the development of $Q$ in terms of the geometry of $S$ in $M$.

We can use the foregoing computations to rewrite (\ref{diffonce}) as
\begin{equation*}
  \overline{K}_{\alpha\beta} + r\overline{\nabla}^Q_{E_0}\overline{K}_{\alpha\beta} =
  \left( \left( \frac{\partial^2\varphi}{\partial r^2} \right)\langle \partial_{x^n},\bnu\rangle
  + \left( \frac{\partial \varphi}{\partial r} \right)E_0\langle \partial_{x^n},\bnu\rangle\right)
  \overline{g}_{\alpha\beta}.
\end{equation*}
To this, we now apply $\overline{\nabla}^Q_{E_0}$ once again. We obtain
\begin{align}
  \label{difftwice}
  2\overline{\nabla}^Q_{E_0}\overline{K}_{\alpha\beta} + r(\overline{\nabla}^Q_{E_0})^2\overline{K}_{\alpha\beta}
  &= \left[ \left( \frac{\partial^3\varphi}{\partial r^3}\right)\langle \partial_{x^n},\bnu\rangle 
  + 2 \left( \frac{\partial^2\varphi}{\partial r^2}\right)E_0\langle\partial_{x^n},\bnu\rangle\right.\\
  &\qquad+ \left.\left( \frac{\partial \varphi}{\partial r} \right)E_0^2\langle \partial_{x^n},\bnu\rangle\right]
  \overline{g}_{\alpha\beta}\nonumber.
\end{align}
We take $r = 0$ and utilize equation (\ref{firstcalcv}) and find in particular that, at $r = 0$,
\begin{align}
  2\overline{\nabla}^Q_{E_0}\overline{K}_{\alpha\beta} &= -E_0^2(\langle \partial_r,\bnu\rangle)\bar{g}_{\alpha\beta}
  \label{secondcalci}.
\end{align}

We next wish to apply the Codazzi-Mainardi equation (see, e.g., \cite{spi99},
Theorem III.1.11). Codazzi states that
\begin{equation*}
  \langle \overline{R}(E_{\gamma},E_{\alpha})E_{\beta},\bnu\rangle = \left(\overline{\nabla}^Q_{E_{\gamma}}
  \overline{K}\right)
  (E_{\alpha},E_{\beta}) - \left( \overline{\nabla}^Q_{E_{\alpha}}\overline{K} \right)\left( E_{\gamma},
  E_{\beta}\right).
\end{equation*}
Hence, taking $\gamma = 0$,
\begin{equation}
  \label{codazzi}
  \overline{\nabla}^Q_{E_0}\overline{K}_{\alpha\beta} = \overline{\nabla}^Q_{E_{\alpha}}K_{0\beta}
  + \langle\overline{R}(E_0,E_{\alpha})E_{\beta},\bnu\rangle.
\end{equation}

We next take $\alpha = s$ (i.e., $1 \leq \alpha \leq n - 1$) and $\beta = 0$. Hence, we have
\begin{equation}
  \bnabla^Q_{E_0}\overline{K}_{s0} = \bnabla^Q_{E_s}\bK_{00} + \langle R(E_0,E_s)E_0,\bnu\rangle.\label{secondcalcii}
\end{equation}
But along $S$, $\overline{K}$ is known by (\ref{secondder}), and we find that
\begin{equation}
  \bnabla^Q_{E_s}\overline{K}_{00} = (1 - \lambda^2)^{3/2}\left( \frac{\partial^3\varphi}{\partial r^2 \partial x^s} \right)
  \overline{g}_{00}.\label{secondcalciii}
\end{equation}
At $r = 0$, $\overline{g}_{00} = \frac{1}{1 - \lambda^2}$ (recall that this is expressed in the $\left\{ E_{\alpha} \right\}$
frame, not the coordinate frame). This allows us to conclude, via (\ref{secondcalcii}) and (\ref{secondcalciii}), that
\begin{equation*}
  \bnabla^Q_{E_0}\bK_{s0} = \sqrt{1 - \lambda^2}\left( \frac{\partial^3\varphi}{\partial r^2\partial x^s} \right)
  + \langle \overline{R}(E_0,E_s)E_0,\bnu\rangle.
\end{equation*}
We can now substitute $\bnu$ and the definitions of $E_{\alpha}$ in terms of the coordinate frame to compute
this curvature term in the coordinate basis. (Every appearance of $\overline{R}$ in index notation will
be with reference to local coordinates, not the frame on $Q$). We also utilize that, because
$g$ is Einstein, $M$ is totally geodesic in $X$ with respect to $\overline{g}$, and so
again by Codazzi, $\langle \overline{R}(\partial_{x^n},\partial_{x^s})\partial_r,\partial_{x^n}\rangle = 0$.
Carrying this straightforward computation out
at $r = 0$, we find that
\begin{equation*}
  \langle \overline{R}(E_0,E_s)E_0,\bnu\rangle = \frac{1}{\sqrt{1 - \lambda^2}}\overline{R}_{0s0n}.
\end{equation*}
Applying these computations to (\ref{secondcalci}) and noting that $\overline{g}_{s0} = 0$ at $r = 0$,
we conclude that
\begin{equation}
  \label{thirddera}
  \frac{\partial^3\varphi}{\partial r^2\partial x^s} = -\frac{1}{1 - \lambda^2}\overline{R}_{0s0n}.
\end{equation}
In conjunction with (\ref{KSeq}), this yields the claim.\
\end{proof}

\begin{remark}
Because Euclidean space is flat, this is precisely what we needed to ensure that, in the $n = 2$ hyperbolic case,
only circles and lines can occur at the corner boundary: if $g$ is the hyperbolic metric on $\mathbb{H}^3$,
then one choice for the compactified metric $\bar{g}$ is the Euclidean metric itself. In this case,
$\overline{R} = 0$, so by (\ref{umbcoefeq}), $S$ has a constant umbilic coefficient in $(M,\bar{g}|_{TM})$.
But the only umbilic hypersurfaces in Euclidean 2-space with constant umbilic coefficients are circles and
straight lines. Notice that in terms purely of the
local boundary geometry, this restriction
occurs at higher order for $n = 2$ than for $n \geq 3$, where it is implied by Corollary \ref{sumbcor}.
\end{remark}

The arguments of the preceding proposition can be extended to yield further conditions on $\bar{g}$ and $Q$, and perhaps
$S$.
For example, let us return
to (\ref{codazzi}), this time with $\alpha = s$ and $\beta = t$ ($1 \leq s,t \leq n -1$). The term
$\overline{\nabla}^Q_{E_s}\overline{K}_{0t}$ can be evaluated using our earlier calculations. We get
\begin{equation*}
  \bnabla^Q_{E_s}\bK_{0t} = \bnabla_{E_s}\left[ (1 - \lambda^2)^{3/2}\left( \frac{\partial^2\varphi}
  {\partial r^2} \right)\right]\overline{g}_{0t} = 0.
\end{equation*}
Hence, at $r = 0$,
\begin{equation*}
  \bnabla^Q_{E_0}\bK_{st} = \overline{R}(E_0,E_s,E_t,\bnu).
\end{equation*}
Expanding the right-hand side in the coordinate frame again yields
\begin{equation}
  \label{Rbareq}
  \bnabla^Q_{E_0}\bK_{st} = \lambda(\overline{R}_{0st0} - \overline{R}_{nstn}).
\end{equation}
By (\ref{secondcalci}), $\tf_{\bar{g}}\bnabla^Q_{E_0}\bK_{st} = 0$; thus we conclude that along $S$,
\begin{equation*}
  \lambda\tf_{\bar{g}|_{TS}}\left(\overline{R}_{0st0} - \overline{R}_{nstn} \right) = 0.
\end{equation*}
This represents an additional $\varphi$-independent condition on $\bar{g}$.

We can generalize this process to see the structure of the conditions that would arise as we differentiated
more. Covariantly differentiating (\ref{multeq}) $k - 1$ times by $E_0$ yields
\begin{equation*}
  k(\overline{\nabla}^Q_{E_0})^{k - 1}\overline{K}_{\alpha\beta} = E_0^{k}(\langle\partial_r,\bnu_Q\rangle)
  \bar{g}_{\alpha\beta}.
\end{equation*}
At each step, taking $\alpha = \beta = 0$ or $\alpha = s, \beta = t$ yields new constraints on
$\frac{\partial^{k}\varphi}{\partial r^{k}}|_{r = 0}$ and also on $\bar{g}$, its curvature tensor,
and its derivatives. Taking $\alpha = 0, \beta = s$ yields additional constraints on lower-order (that is,
already-developed) $r$-derivatives of $\varphi$, possibly also putting further constraints on $\bar{g}$,
as in (\ref{Rbareq}). In this way, a vastly overdetermined system of equations for $\bar{g}$ and $\varphi$
would be produced. Actually carrying these computations out to higher order becomes quickly unwieldly; in any case,
we have already developed extremely restrictive constraints on a smooth solution. In section \ref{einsec}, we will therefore
study formal existence of Einstein metrics on the blowup that are in normal form in the sense of Theorem \ref{normformthm}.
Thus, in particular, the constructed metrics will be smooth on the blowup but not on the base.
As we will see, this setting offers precisely the right relaxation of smoothness to allow relatively unique general solutions.

\section{The Laplacian and ODE Analysis}\label{laplsec}

In this section, we study the eigenvalue problem for the scalar Laplacian on cornered asymptotically
hyperbolic spaces and prove Theorem \ref{lapthm}. 
We study the formal asymptotics of eigenfunctions on a CAH space
with given boundary conditions.
Along the way, we prove results about some ODEs that will also be important in the next section.

Throughout, let $(X,M,Q)$ be a cornered space and
$(\tX,\tM,\tQ,\tS)$ its blowup, with $g$ an
admissible metric on $\tX$. We will let $\theta, x, \rho$ be coordinates
in which $g$ takes the form (\ref{g}), with $k_{\rho}$ as in that equation.
We also introduce the following notation, motivated by \cite{gl91}.
We will write $E^k$ to denote
any polynomial of degree less than or equal to $k$ in
$\sin(\theta)\frac{\partial}{\partial \theta}$
and $\rho\sin(\theta)\frac{\partial}{\partial x^{\mu}}$ ($1 \leq \mu \leq n$), with
coefficients in $C^{\infty}(\tX)$. Finally, we fix $s > \frac{n}{2}$.

We first compute the Laplace operator of $g$.

\begin{lemma}
  \label{laplem}
  Let $(\tX^{n + 1},\tM,\tQ,\tS)$ be the blowup of a cornered space, and $g$
  an admissible metric on $\tX$ expressed in the form (\ref{g}). Then for 
  $u \in C^{\infty}(\tX \setminus (\tM \cup \tS))$,
  \begin{multline*}
    \Delta_gu = \sin^2(\theta)\partial_{\theta}^2u + 
  (1 - n) \sin(\theta)\cos(\theta)\partial_{\theta}u 
  + \rho^2\sin^2(\theta)\partial_{\rho}^2u +\\
  \qquad(2 - n)\rho\sin^2(\theta)\partial_{\rho}u +
  \rho^2\sin^2(\theta)\Delta_{k_{\rho}}u + \rho\sin(\theta)E^2(u).
  \end{multline*}
\end{lemma}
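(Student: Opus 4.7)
\textbf{Proof plan for Lemma \ref{laplem}.} My plan is to work directly from the coordinate expression for $\Delta_g$, namely
\begin{equation*}
\Delta_g u \;=\; \frac{1}{\sqrt{|g|}}\,\partial_i\!\left(\sqrt{|g|}\,g^{ij}\partial_j u\right) \;=\; g^{ij}\partial_i\partial_j u \;+\; \bigl(\partial_i g^{ij}\bigr)\partial_j u \;+\; g^{ij}\bigl(\partial_i\log\sqrt{|g|}\bigr)\partial_j u,
\end{equation*}
and read off the principal contributions from (\ref{g}) and (\ref{ginv}), stuffing everything else into $\rho\sin(\theta)E^2(u)$. First, from (\ref{g}) I compute $\sqrt{|g|} = \csc^{n+1}(\theta)\,\rho^{-n}\sqrt{|k_\rho|}\bigl(1 + \rho\sin(\theta)\phi\bigr)$ for some $\phi \in C^\infty(\tX)$, since $\det$ of the leading block-diagonal matrix inside the $\csc^2(\theta)$ factor is $\rho^{-2n}|k_\rho|$. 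This gives clean leading behaviors $\partial_\theta\log\sqrt{|g|} = -(n+1)\cot(\theta) + O(\rho\sin\theta\text{-smooth})$ and $\partial_\rho\log\sqrt{|g|} = -n/\rho + O(1)$, with the $O(1)$ term in $\rho$ coming from $\partial_\rho\log\sqrt{|k_\rho|}$ and the error $\phi$.

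Next I extract the pure principal terms. The diagonal pieces of $g^{ij}\partial_i\partial_j u$ give, to leading order, $\sin^2(\theta)\partial_\theta^2 u$, $\rho^2\sin^2(\theta)\partial_\rho^2 u$, and $\rho^2\sin^2(\theta)\,k_\rho^{st}\partial_s\partial_t u$. For the coefficient of $\partial_\theta u$, the $i=\theta$ contribution from $\partial_ig^{i\theta} + g^{i\theta}\partial_i\log\sqrt{|g|}$ gives
\begin{equation*}
\partial_\theta(\sin^2\theta) + \sin^2\theta\cdot\bigl(-(n+1)\cot\theta\bigr) = 2\sin\theta\cos\theta - (n+1)\sin\theta\cos\theta = (1-n)\sin\theta\cos\theta,
\end{equation*}
which is exactly the coefficient in the lemma. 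Similarly, for $\partial_\rho u$ the $i=\rho$ contribution yields $2\rho\sin^2\theta - n\rho\sin^2\theta = (2-n)\rho\sin^2\theta$. Finally, the $s,t$ block of $g^{ij}\partial_i\partial_j u$ together with the $x^s$-derivative of $\sqrt{|k_\rho|}$ piece and the $x^s$-derivative of $k_\rho^{st}$ assembles (after multiplication by $\rho^2\sin^2\theta$) exactly into $\rho^2\sin^2(\theta)\Delta_{k_\rho}u$ by the standard formula $\Delta_{k_\rho} = \frac{1}{\sqrt{|k_\rho|}}\partial_s(\sqrt{|k_\rho|}\,k_\rho^{st}\partial_t)$.

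What remains is to verify that every other term is of the form $\rho\sin(\theta)E^2(u)$. The off-diagonal entries $g^{\theta s}, g^{\theta\rho}$ are $O(\rho^2\sin^3\theta)$ and $g^{s\rho}$ is $O(\rho^3\sin^3\theta)$, so when they multiply mixed second derivatives, they produce terms like $\rho^2\sin^3(\theta)\partial_\theta\partial_s u = \rho\sin(\theta)\cdot\bigl[\rho\sin^2(\theta)\partial_\theta\partial_s u\bigr]$; the bracketed expression fits into $E^2$ because of the identity $(\sin\theta\partial_\theta)(\rho\sin\theta\partial_s) = \rho\sin\theta\cos\theta\,\partial_s + \rho\sin^2\theta\,\partial_\theta\partial_s$. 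Analogous manipulations (using $(\sin\theta\partial_\theta)^2 = \sin\theta\cos\theta\,\partial_\theta + \sin^2\theta\,\partial_\theta^2$, $(\rho\sin\theta\partial_\rho)^2 = \rho\sin\theta(\rho\sin\theta+\sin\theta)\partial_\rho + \rho^2\sin^2\theta\partial_\rho^2$, and $(\rho\sin\theta\partial_s)(\rho\sin\theta\partial_t) = \rho^2\sin^2\theta\,\partial_s\partial_t$) show that $\sin^2\theta\,\partial_\theta^2$, $\rho^2\sin^2\theta\,\partial_\rho^2$, and $\rho^2\sin^2\theta\,\partial_s\partial_t$ are themselves $E^2$ up to first-order $E^1$ corrections. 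Thus the $O(\rho\sin\theta)$ relative errors on the diagonal entries of $g^{ij}$ likewise produce $\rho\sin(\theta)E^2(u)$ contributions. The first-order coefficients from the $O$-error pieces of $\sqrt{|g|}$ and from off-diagonal entries of $g^{ij}$ contribute to the coefficients of $\partial_\theta u, \partial_\rho u, \partial_s u$ with one extra factor of $\rho\sin\theta$ compared to the principal parts, so these also collect into $\rho\sin(\theta)E^1(u) \subset \rho\sin(\theta)E^2(u)$.

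I expect no real obstacle beyond the bookkeeping in this last step; the slightly delicate point is keeping track of which off-diagonal entries have order $\rho^2\sin^3\theta$ versus $\rho^3\sin^3\theta$ and verifying in each case that, after the algebraic identities above, the result lies in $\rho\sin(\theta)E^2(u)$ with $C^\infty(\tX)$ coefficients (in particular, that factors like $\cos\theta$, $\sqrt{|k_\rho|}^{\pm 1}$, and $\phi$ are smooth up to $\tS\cup\tM$, which they are by admissibility).
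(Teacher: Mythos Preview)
Your proposal is correct and follows essentially the same approach as the paper: the paper's proof is simply the one-line remark that the result follows from the formula $\Delta_g u = g^{-1/2}\partial_i(g^{ij}g^{1/2}\partial_j u)$ together with (\ref{g}) and (\ref{ginv}), and you have carried out exactly this computation in detail. (There is a minor slip in your expansion of $(\rho\sin\theta\,\partial_\rho)^2$ --- the first-order term should be $\rho\sin^2\theta\,\partial_\rho$, not $\rho\sin\theta(\rho\sin\theta+\sin\theta)\partial_\rho$ --- but this does not affect the argument, since either expression lies in $E^2$ up to $E^1$ corrections.)
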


\begin{proof}
  Using the formula $\Delta_gu = g^{-1/2}\partial_i(g^{ij}g^{1/2}
  \partial_ju)$, this follows easily from (\ref{g}) and (\ref{ginv}).
\end{proof}

We wish to carry out an asymptotic analysis of solutions
to a boundary-value problem for the equation $\Delta_g + s(n - s)u = 0$.
To carry out our analysis, we will expand the solution order-by-order
in $\rho$. Consequently, we work in terms of the 
\emph{indicial operator} of $P_s = \Delta_g + s(n - s)$. For $\nu \in \mathbb{R}$,
this is the operator $I_{s,\nu}:C^{\infty}(\tS) \to C^{\infty}(\tS)$ defined by
extending $u \in C^{\infty}(\tS)$ to $\tu \in C^{\infty}(\tX)$, and then setting
$I_{s,\nu}(u) = \rho^{-\nu}P_s(\rho^{\nu}\tu)|_{\rho = 0}$. The following
is immediate from Lemma \ref{laplem}.

\begin{lemma}
  \label{indexlem}
  Let $\nu \in \mathbb{R}$. Then
  \begin{equation}
    \label{iform}
    I_{s,\nu}(u) = \sin^2(\theta)\partial_{\theta}^2u
    + (1 - n)\sin(\theta)\cos(\theta)\partial_{\theta}u
    + \nu(\nu + 1 - n)\sin^2(\theta)u + s(n - s)u.
  \end{equation}
\end{lemma}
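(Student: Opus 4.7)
The plan is a direct computation: apply the formula for $\Delta_g$ from Lemma \ref{laplem} to $\rho^{\nu}\tilde{u}$, where $\tilde{u} \in C^{\infty}(\tX)$ is any extension of $u \in C^{\infty}(\tS)$, divide by $\rho^{\nu}$, and restrict to $\rho = 0$. Then add the zeroth-order contribution $s(n-s)\rho^{\nu}\tilde{u}$ from $P_s = \Delta_g + s(n-s)$. Since the final answer in (\ref{iform}) depends only on $u$ (not on the extension $\tilde u$), the verification will also confirm that $I_{s,\nu}$ is well-defined.

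First I would handle the three terms in Lemma \ref{laplem} that contain no $\rho$-derivatives. The terms $\sin^2(\theta)\partial_{\theta}^2(\rho^{\nu}\tilde u)$ and $(1-n)\sin(\theta)\cos(\theta)\partial_{\theta}(\rho^{\nu}\tilde u)$ each simply pull out a factor $\rho^{\nu}$, so after dividing by $\rho^{\nu}$ and setting $\rho = 0$ they contribute the first two terms of (\ref{iform}) with $\tilde u$ replaced by $u$. The term $s(n-s)\rho^{\nu}\tilde u$ contributes $s(n-s)u$ in the same way.

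Next I would collect the contributions involving $\partial_{\rho}$. Computing
\[
\rho^2\sin^2(\theta)\partial_{\rho}^2(\rho^{\nu}\tilde u) = \nu(\nu-1)\rho^{\nu}\sin^2(\theta)\tilde u + 2\nu\rho^{\nu+1}\sin^2(\theta)\partial_{\rho}\tilde u + \rho^{\nu+2}\sin^2(\theta)\partial_{\rho}^2\tilde u,
\]
and
\[
(2-n)\rho\sin^2(\theta)\partial_{\rho}(\rho^{\nu}\tilde u) = (2-n)\nu\rho^{\nu}\sin^2(\theta)\tilde u + (2-n)\rho^{\nu+1}\sin^2(\theta)\partial_{\rho}\tilde u,
\]
and adding: the pieces with an extra factor of $\rho$ vanish at $\rho = 0$, leaving coefficient $\nu(\nu-1) + (2-n)\nu = \nu(\nu+1-n)$ on $\sin^2(\theta)\tilde u$. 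After restricting to $\rho = 0$ this gives the third term of (\ref{iform}). The tangential Laplacian term $\rho^2\sin^2(\theta)\Delta_{k_{\rho}}(\rho^{\nu}\tilde u)$ contributes $\rho^{\nu+2}\sin^2(\theta)\Delta_{k_{\rho}}\tilde u$, which vanishes at $\rho = 0$ after dividing by $\rho^{\nu}$.

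The one step requiring a brief justification is the remainder $\rho\sin(\theta)E^2(\rho^{\nu}\tilde u)$. Here I would observe that each of the generating vector fields $\sin(\theta)\partial_{\theta}$, $\rho\sin(\theta)\partial_{x^s}$, $\rho\sin(\theta)\partial_{\rho}$ sends a function of the form $\rho^{\nu}(\text{smooth})$ to another such function, so $E^2(\rho^{\nu}\tilde u) = \rho^{\nu}F$ for some $F \in C^{\infty}(\tX)$; the prefactor $\rho\sin(\theta)$ then gives $O(\rho^{\nu+1})$, which vanishes at $\rho = 0$ after division by $\rho^{\nu}$. Assembling all contributions yields (\ref{iform}). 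I do not expect any real obstacle; the computation is a routine expansion, with the only subtlety being the combinatorial identity $\nu(\nu-1) + (2-n)\nu = \nu(\nu+1-n)$ that produces the characteristic indicial coefficient.
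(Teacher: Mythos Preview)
Your proposal is correct and follows exactly the approach the paper intends: the paper's proof is simply the one-line remark that the formula is immediate from Lemma~\ref{laplem}, and your write-up just unpacks that computation carefully. There is nothing to add.
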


We now specialize to the constant-angle case, supposing that $M$ and $Q$ make
constant angle $\theta_0$ with respect to $g_X$. We will assume the metric
is in the form given by Corollary \ref{polarcor};
however, we will suppress the diffeomorphism $\chi$ and will simply
regard $(\theta,x,\rho)$ as a parametrization of $\tX = [0,\theta_0] \times S \times [0,\varepsilon)$ near $\tS$. Thus,
we will take $g$ to be given by
\begin{equation}
  \label{formcopy2}
  g = \frac{1}{\sin^2(\theta)}\left[ d\theta^2 + h_{\theta} \right],
\end{equation}
where $h_{\theta}$ is a smooth family of smooth AH metrics on the hypersurface $\tM = \left\{ 0 \right\} \times S \times [0,\varepsilon)$
and where $h_0 = \frac{d\rho^2 + k_{\rho}}{\rho^2}$.
We impose the inhomogeneous boundary condition $\sin^{s - n}(\theta)u|_{\tM} = \psi$ at $\tM$ (where $\psi \in C^{\infty}(\tM)$) and
the homogeneous Robin condition $\partial_{\nu}u + (s - n)\cot(\theta)u(\theta)|_{\tQ} = 0$ at $\tQ$. Notice that for the metric
(\ref{formcopy2}), this last is equivalent to $\partial_{\theta}u(\theta_0) + (s - n)\cot(\theta_0)u(\theta_0) = 0$. Notice also
that for $s = n$ or for $\theta_0 = \frac{\pi}{2}$, this reduces to a homogeneous Neumann condition.
These boundary conditions are motivated partially by their relevance, in the case $s = n$, to the Einstein problem considered in the next section.
The form for $s \neq n$ is motivated by the naturalness of the analysis it leads to; however, the below analysis could easily be modified to study
a variety of other boundary conditions.

Notice that, with the constant-angle hypothesis, the indicial operator (\ref{iform}) restricts to each
fiber as an operator independent of
the fiber, if each fiber is identified with the interval $[0,\theta_0]$. Since we will construct
solutions order-by-order in $\rho$ using the indicial operator, our problem turns on an analysis
of the equation $I_{s,\nu}u = f$, with homogeneous Robin boundary condition at $\theta = \theta_0$
and Dirichlet boundary condition at $\theta = 0$. We turn to an analysis of this equation.

\subsection{Function Spaces}\label{funcsubsec}
It will be useful, before undertaking an analysis of the indicial operator, to define some function spaces.

First we define several spaces of functions on the interval $[0,\theta_0]$, where $\theta_0 \in (0,\pi)$ is fixed. Let $n \geq 2$,
$s > \frac{n}{2}$, and $k \geq 1$. If $2s \notin \mathbb{Z}$, define,

\begin{equation}
  \label{Asnk}
  \mathcal{A}_{n,s,k}(\theta_0) = \theta^{n - s}C^{\infty}([0,\theta_0]).
\end{equation}

Otherwise, if $2s \in \mathbb{Z}$, define
\begin{equation}
  \label{Ank}
  \mathcal{A}_{n,s,k}(\theta_0) = \theta^{n - s}C^{\infty}([0,\theta_0]) \oplus \theta^{n - s}\bigoplus_{i = 1}^k (\theta^{2s - n}\log\theta)^iC^{\infty}([0,\theta_0]).
\end{equation}
In either case, define
\begin{equation}
  \label{Ank0}
  \begin{split}
  \mathcal{A}_{n,s,k}^0(\theta_0) =& \left\{ u \in \mathcal{A}_{n,s,k}: \sin^{s - n}(\theta)u(\theta)|_{\theta = 0} = 0\right.\\
  & = \left.u'(\theta_0) + (s - n)\cot(\theta_0)u(\theta_0)\right\}.
  \end{split}
\end{equation}
Next, if $2s \notin \mathbb{Z}$, define
\begin{equation*}
  \label{Bn1noz}
  \mathcal{B}_{n,s,k}(\theta_0) = \theta^{n - s + 1}C^{\infty}([0,\theta_0]).
\end{equation*}
On the other hand, if $2s \in \mathbb{Z}$, define
\begin{equation}
  \label{Bn1}
  \mathcal{B}_{n,s,1}(\theta_0) = \theta^{n - s + 1} C^{\infty}([0,\theta_0]) \oplus \theta^{s + 1}\log(\theta)C^{\infty}([0,\theta_0]),
\end{equation}
and for $k \geq 2$, define
\begin{equation}
  \label{Bnk}
  \mathcal{B}_{n,s,k}(\theta_0) = \mathcal{B}_{n,s,1}(\theta_0) \oplus \theta^{s - n}\bigoplus_{i = 2}^k (\theta^{2s - n}\log\theta)^iC^{\infty}([0,\theta_0]).
\end{equation}
In general, $\theta_0$ will be fixed and clear from the context, and we will refer to these spaces simply as $\mathcal{A}_{n,s,k}$ and
$\mathcal{B}_{n,s,k}$.

Now let $Y$ be any manifold with or without boundary, and $\mathcal{V}$ a vector bundle over $Y$. We define spaces of one-parameter families of smooth sections $u_{\theta}:Y \to \mathcal{V}$
($0 \leq \theta \leq \theta_0$) as follows. Let $\pi:[0,\theta_0] \times M \to M$ be the projection on the second factor, and $\pi^*\mathcal{V}$ be the pullback bundle of $\mathcal{V}$ to the
product. Thus we let $C^{\infty}([0,\theta_0] \times M,\pi^*\mathcal{V})$ be the space of smooth sections of the pullback bundle.
Then, if $2s \notin \mathbb{Z}$, we set
\begin{equation*}
  \mathcal{A}_{n,s,k}(\theta_0,Y,\mathcal{V}) = \theta^{n - s}C^{\infty}([0,\theta_0] \times M, \pi^*\mathcal{V}),
\end{equation*}
and otherwise, we set
\begin{equation*}
  \begin{split}
  \mathcal{A}_{n,s,k}(\theta_0,Y,\mathcal{V}) = \theta^{n - s}C^{\infty}([0,&\theta_0] \times M,\pi^*\mathcal{V}) \oplus\\
  &\theta^{n - s}\bigoplus_{i = 1}^{k}(\theta^{2s - n}\log\theta)^iC^{\infty}
  ([0,\theta_0]\times M,\pi^*\mathcal{V}).
\end{split}
\end{equation*}
We similarly define $\mathcal{A}_{n,s,k}^0(\theta_0,Y,\mathcal{V})$ and $\mathcal{B}_{n,s,k}$ in the obvious fashion, generalizing their scalar counterparts in the same way as for $\mathcal{A}_{n,s,k}$.
Thus, in particular, $\mathcal{A}_{n,s,k}(\theta_0)$ is canonically isomorphic to $\mathcal{A}_{n,s,k}(\theta_0,\left\{ 0 \right\},\mathbb{R})$.
In each of these function spaces, if $\mathcal{V}$ is omitted, then it is taken
to be the trivial vector bundle $\mathbb{R} \times Y$.

We next define two families of function spaces on $[0,\theta_0]_{\theta} \times [0,\varepsilon)_{\rho}$. For $q \geq 0$ an integer and $2s \in \mathbb{Z}$, we let
$\mathcal{E}_{n,s,q}$\label{Enq} be the space of functions $\eta$ that can be written
\begin{equation*}
  \eta(\theta,\rho) = \rho^{s + q}\sum_{i = 0}^{\left\lfloor \frac{q}{2} + 1\right\rfloor}\log(\rho)^{i}b_i(\theta),
\end{equation*}
where each $b_i \in \mathcal{A}_{n,s,1}^0$. Next, we let $\mathcal{F}_{n,s,q}$ be the space of functions $\eta$
on $[0,\theta_0] \times [0,\varepsilon)$ that can be written
\begin{equation*}
  \eta(\theta,\rho) = \rho^{s + q}\sum_{i = 0}^{\left\lfloor\frac{q + 1}{2}\right\rfloor}\log(\rho)^ic_i(\theta),
\end{equation*}
where each $c_i \in \mathcal{B}_{n,s,1}$.

Now let $(\tX,\tM,\tQ,\tS)$ be the blowup of a cornered space $X^{n + 1}$. Let $(\theta,x,\rho)$, where $x \in S$, be a parametrization
of $\tX$ near $\tS$ for which $\theta$ and $\rho$ are defining functions for $\tM$ and $\tS$, respectively. An example would be the polar decompositions
considered in Section \ref{backsec}.
Then if $2s \notin \mathbb{N}$, we define
\begin{equation*}
  \mathcal{P}_{s}(\tX) = \mathcal{R}_{s}(\tX) = \rho^{n - s}\theta^{n - s}C^{\infty}(\tX).
\end{equation*}
Otherwise, we define
\begin{equation}
  \label{R}
  \mathcal{R}_s(\tX) = \rho^{n - s}\theta^{n - s}C^{\infty}(\tX) + \rho^{n - s}\theta^{s}\log(\theta)C^{\infty}(\tX).
\end{equation}
Notice that the definition is independent of the choice of $\theta$. Then, define $\mathcal{P}_s(\tX)$ to be the space of functions $u \in C^{\infty}(\mathring{\tX})$
that have an asymptotic expansion
\begin{equation}
  \label{Px}
  u(\theta,x,\rho) \sim a_0(\theta,x,\rho) + \rho^{2s - n}\sum_{j = 1}^{\infty}\rho^{2(j - 1)}\log(\rho)^ja_j(\theta,x,\rho),
\end{equation}
where each $a_j \in \mathcal{R}_s(\tX)$.

We next define three families of spaces which will be used only in Section \ref{einsec}, but which are sufficiently related to the above spaces that it will be convenient
to have them defined here.
First, if $n \geq 4$ is even, then we define
$\mathcal{H}_{n,k}(\theta_0,Y,\mathcal{V}) = \mathcal{A}_{n,n,k}(\theta_0,Y,\mathcal{V})$; and we let $\mathcal{H}_n(\theta_0,Y,\mathcal{V})$ be the space of maps $u$
in $C^{\infty}((0,\theta_0]\times M,\pi^*\mathcal{V}) \cap C^{n - 1}([0,\theta_0]\times M,\pi^*\mathcal{V})$ that have an infinite asymptotic expansion $u \sim \sum_{i = 0}^{\infty}u_i$ at $\theta = 0$, where
$u_i \in (\theta^n\log\theta)^iC^{\infty}([0,\theta_0]\times M,\pi^*\mathcal{V})$. If $n = 2$ or $n$ is odd, then for consistency of notation, 
we define $\mathcal{H}_{n,k}(\theta_0,Y,\mathcal{V})$ and $\mathcal{H}_n(\theta_0,Y,\mathcal{V})$
to be simply $C^{\infty}([0,\theta_0]\times M,\pi^*\mathcal{V})$.

Finally, if $M^n$ is a manifold with boundary, we define the space of metrics $\mathcal{M}(\theta_0,M) \subset \mathcal{H}_{n}(\theta_0,M,S^2({}^0T^*M))$
\label{qdef} to be the $h_{\theta}$ in $\mathcal{H}_{n}(\theta_0,M,S^2({}^0T^*M))$ such that, for $\theta$ fixed, $h_{\theta}$ is a smooth AH metric on $M$.

\subsection{ODE Analysis}\label{odesubsec}

It is an elementary exercise in the theory of regular singular operators to see that the indicial roots in $\theta$ of $I_{s,\nu}$ at $\theta = 0$ are $n - s$ and $s$.
We will refer to an \emph{indicial root} (in $\rho$) of $\Delta_g + s(n - s)$ as a value of $\nu$ for which
$I_{s,\nu}$ is not injective on the space of smooth functions $u$ on $[0,\theta_0]$ satisfying
$u(\theta) = o(\theta^{n - s})$ and
$u'(\theta_0) + (s - n)\cot(\theta_0)u(\theta_0) = 0$. It will be convenient to write $u(\theta) = \sin^{n - s}(\theta)v(\theta)$. It is then straightforward to compute that,
equivalently, an indicial root is a value of $\nu$
for which $\lambda_{\nu} := \nu(\nu + 1 - n)$ is an eigenvalue of the operator
\begin{equation*}
  \begin{split}
  L_s =& -\frac{d^2}{d\theta^2} - (n + 1 - 2s)\cot(\theta)\frac{d}{d\theta} + (s - 1)(s - n)\\
=&-\sin^{2s - 1 - n}(\theta)d_{\theta}(\sin^{n + 1 - 2s}(\theta)d_{\theta}) + (s - 1)(s - n),
  \end{split}
\end{equation*}
with the
boundary conditions $v(0) = 0 = v'(\theta_0)$. In the above
factoring of the eigenvalues $\lambda_{\nu}$, we call $\nu$ the \emph{spectral parameter}.
It will be useful also to record the relationship
\begin{equation}
  \label{ilrel}
  I_{s,\nu}(u(\theta)) = \sin^2(\theta)(\lambda_{\nu}u(\theta) - \sin^{n - s}(\theta)L_s(\sin^{s - n}(\theta)u(\theta))).
\end{equation}
As an operator on $[0,\theta_0]$, $L_s$ on the interval $(0,\theta_0)$ has a limit point singularity
at $\theta = 0$, and we impose the homogeneous Neumann condition at $\theta = 
\theta_0$. It will follow from results in this section that its spectrum is discrete; and it is then a standard result in Sturm-Liouville theory that the eigenfunctions,
which are smooth, form an orthonormal basis for $L^2([0,\theta_0])$ with the measure $\sin^{n + 1 - 2s}(\theta)d\theta$.
(The discreteness of the spectrum could also be deduced from fairly general theorems.)

We begin with the following elementary result.

\begin{lemma}
  \label{llem}
  Let $u,v \in C^2([0,\theta_0])$ be such that one is $O(\theta)$ and the other is $O(\theta^{2s - n})$,
  and such that $u'(\theta_0) = 0 = v'(\theta_0)$. Then
  \begin{align}\label{llema}\int_0^{\theta_0}(L_su)v \sin^{n + 1 - 2s}(\theta)d\theta &= \int_{0}^{\theta_0}u'(\theta)v'(\theta)
      \sin^{n + 1 - 2s}(\theta)d\theta\\
  &\quad + (s - 1)(s - n)\int_0^{\theta_0}u(\theta)v(\theta)\sin^{n + 1 - 2s}(\theta)d\theta;\nonumber\\
  \intertext{and}
    \label{llemb} \int_0^{\theta_0} (L_su)v \sin^{n + 1 - 2s}(\theta)d\theta &= \int_{0}^{\theta_0}u(L_sv)\sin^{n + 1 - 2s}(\theta)d\theta.
  \end{align}
\end{lemma}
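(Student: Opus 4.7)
The plan is to recast $L_s$ in self-adjoint Sturm--Liouville form, integrate by parts on a truncated interval $(\epsilon,\theta_0)$, and then let $\epsilon\to 0$. Using the factored expression for $L_s$ displayed just above the lemma, multiplication by $\sin^{n+1-2s}(\theta)$ gives
\begin{equation*}
(L_s u)(\theta)\sin^{n+1-2s}(\theta) = -\frac{d}{d\theta}\!\left(\sin^{n+1-2s}(\theta)\,u'(\theta)\right) + (s-1)(s-n)\,u(\theta)\sin^{n+1-2s}(\theta).
\end{equation*}
Multiplying by $v(\theta)$ and integrating over $(\epsilon,\theta_0)$, the second term on the right reproduces the $(s-1)(s-n)$ contribution to (\ref{llema}), while a single integration by parts on the first term yields $\int_\epsilon^{\theta_0}\sin^{n+1-2s}u'v'\,d\theta$ together with the boundary contribution $-\left[\sin^{n+1-2s}\,u'\,v\right]_\epsilon^{\theta_0}$. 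The endpoint at $\theta_0$ vanishes by the assumed condition $u'(\theta_0)=0$.

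The remaining task, which is the main technical point, is to show that the lower boundary term $\sin^{n+1-2s}(\epsilon)\,u'(\epsilon)\,v(\epsilon)$ tends to zero as $\epsilon\to 0$, and that the two integrals over $(\epsilon,\theta_0)$ converge to their counterparts over $(0,\theta_0)$. Taylor expansion for $C^2$ functions gives the needed derivative estimates: if $u=O(\theta)$ then $u'=O(1)$, while the hypothesis $v=O(\theta^{2s-n})$ combined with $C^2$ regularity forces the appropriate vanishing of Taylor coefficients of $v$ and a corresponding estimate on $v'$. Together with $\sin^{n+1-2s}=O(\theta^{n+1-2s})$, a short case analysis on the range of $2s-n$ (relative to the thresholds $1$ and $2$) shows that $uv\sin^{n+1-2s}$ is $O(\theta^2)$ near zero, that $u'v'\sin^{n+1-2s}$ is integrable there, and that the boundary product is $o(1)$. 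This boundary analysis is the principal obstacle; everything else is routine, but tracking the powers of $\theta$ correctly across the allowed range of $s$ is where care is required.

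Finally, (\ref{llemb}) is immediate from (\ref{llema}): the right-hand side of (\ref{llema}) is symmetric in $u$ and $v$, and the hypotheses are also symmetric in $u$ and $v$, so applying (\ref{llema}) with the roles of $u$ and $v$ interchanged and comparing gives the self-adjointness identity.
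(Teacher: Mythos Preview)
Your proposal is correct and takes essentially the same approach as the paper: write $L_s$ in the divergence form $L_s u = -\sin^{2s-1-n}(\theta)\,\partial_\theta\bigl(\sin^{n+1-2s}(\theta)\,u'\bigr) + (s-1)(s-n)u$, integrate by parts to get (\ref{llema}), and deduce (\ref{llemb}) from the symmetry of the right-hand side. The paper's proof is a two-sentence sketch that omits the boundary-term verification you spell out; your added detail on the $\epsilon\to 0$ limit and the case analysis in $2s-n$ is exactly what is implicitly being used.
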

\begin{proof}
  Equation \ref{llema} follows using integration by parts after writing $L_su(\theta)$ in divergence form as
  $L_su(\theta) = -\sin^{2s - 1 - n}(\theta)\frac{d}{d\theta}\left[ \sin^{n + 1 - 2s}(\theta)u'(\theta) \right] + (s - 1)(s - n)$. Equation
  \ref{llemb} follows from part \ref{llema} by symmetry.
\end{proof}

We next study the eigenvalues of $L_s$. First, we state a singular Sturm
comparison theorem from \cite{nai12}.

\begin{proposition}[Theorem 3 from \cite{nai12}]
  \label{sturmthm}
  Suppose that $u \in C^2((a,b))$ satisfies the equation
  \begin{equation*}
    (p(t)u')' + q(t)u = 0,
  \end{equation*}
  on the interval $(a,b)$, where $p \in C^1((a,b))$ and $q \in C^0((a,b))$, and $p(t) \geq 0$. Suppose further 
  that $\int_a\frac{1}{p(t)u(t)^2}dt = \infty$, and that
  $\int^b\frac{1}{p(t)u(t)^2}dt = \infty$. Suppose further that
  $u(t)$ has exactly $n - 1$ zeros on the interval $(a,b)$, where $n \in \mathbb{N}$. 
  
  Now let $P \in C^1((a,b))$ and $Q \in C^0((a,b))$ be such that $p(t) \geq P(t) \geq 0$ and
  $Q(t) \geq q(t)$ on $(a,b)$, and that $Q(t) \not\equiv q(t)$. Suppose that $v \in C^2((a,b))$ satisfies the equation
  \begin{equation*}
    (P(t)v')' + Q(t)v = 0.
  \end{equation*}
  Then $v(t)$ has at least $n$ zeroes in $(a,b)$.
\end{proposition}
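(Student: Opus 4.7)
The plan is to adapt the classical Picone-identity proof of Sturm's comparison theorem so that it accommodates the singular endpoints $a$ and $b$. First I would establish the Picone identity
\[
\frac{d}{dt}\left[\frac{u}{v}(pu'v - Pv'u)\right] = (p-P)(u')^2 + (Q-q)u^2 + P\left(u' - \frac{uv'}{v}\right)^{2},
\]
valid on any subinterval on which $v$ does not vanish. Under the hypotheses $p \geq P \geq 0$ and $Q \geq q$ the right-hand side is pointwise nonnegative, and since $Q \not\equiv q$ it is strictly positive on a set of positive measure.

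Arguing by contradiction, I would assume $v$ has at most $n - 1$ zeros in $(a,b)$. Labeling the zeros of $u$ as $c_1 < \cdots < c_{n-1}$ partitions $(a,b)$ into $n$ open subintervals, so by the pigeonhole principle at least one of them is free of zeros of $v$. For an interior subinterval $(c_i, c_{i+1})$ of this type, I would integrate the Picone identity from $c_i$ to $c_{i+1}$: the boundary quantity $puu' - Pu^2 v'/v$ vanishes at both endpoints because $u$ does, so the nonnegative right-hand side must integrate to zero. This forces $u$ to be a scalar multiple of $v$ on the subinterval and $p \equiv P$, $Q \equiv q$ there; a short case analysis using unique continuation for the two ODEs and the global inequality $Q \not\equiv q$ then produces a contradiction.

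The main obstacle is the case in which every zero-free subinterval abuts one of the singular endpoints, say $(a, c_1)$. Then the integrated Picone argument requires controlling $\lim_{t \to a^+}(puu' - Pu^2 v'/v)$, which is not \emph{a priori} zero, and this is exactly where the integrability hypothesis $\int_a (pu^2)^{-1}\,dt = \infty$ enters. In Hartman's terminology, this condition identifies $u$ as the principal (minimal-growth) solution of its equation at $a$, meaning any linearly independent solution of $(pw')' + qw = 0$ grows strictly faster near $a$. Combining this characterization with the pointwise dominance of $(p,q)$ by $(P,Q)$ and the assumption that $v$ is nonvanishing on $(a,c_1)$, one argues that $v/u$ has a finite, nonzero limit as $t \to a^+$ and that consequently the boundary term has limit zero. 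The symmetric condition at $b$ handles the other endpoint, and the remainder of the Picone argument then proceeds as in the interior case. Making the principal-solution analysis and the limit of the boundary term rigorous is the technical heart of the proof.
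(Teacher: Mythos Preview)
The paper does not prove this proposition: it is quoted from \cite{nai12} as a black box and then applied in the proof of Proposition \ref{specprop}. There is no argument in the paper to compare yours against.

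Your Picone-identity outline is the standard route to such singular comparison theorems and is sound in structure. Two places where the sketch would need sharpening if you wrote it out in full. First, the claim that $v/u$ has a finite nonzero limit at the singular endpoint is stronger than what the hypothesis $\int_a (pu^2)^{-1}\,dt = \infty$ gives and stronger than what you need; all that is required is that the boundary quantity $puu' - Pu^2 v'/v$ has nonpositive $\liminf$ as $t \to a^+$, and this is obtained from the principal-solution characterization through a Riccati-variable argument rather than a direct limit for $v/u$. Second, in the degenerate case where the Picone integrand vanishes identically on an interior subinterval $(c_i,c_{i+1})$, the conclusion $u = cv$ holds only on that subinterval, and since $u$ and $v$ satisfy \emph{different} equations globally, ordinary unique continuation does not propagate the identity; instead one observes that $v(c_i) = v(c_{i+1}) = 0$, which already supplies extra zeros of $v$ and feeds back into the counting. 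Neither point is a real obstruction, and the approach goes through with care.
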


\begin{proposition}
  \label{specprop}
  Let $n \geq 2$ and $\theta_0 \in (0,\pi)$. Then
  the smallest eigenvalue of $L_s$ for the boundary conditions $u(0) = 0$ and $u'(\theta_0) = 0$
  \begin{itemize}
    \item lies in $((s - 1)(s - n),s(s + 1 - n))$ if $\frac{\pi}{2} < \theta_0 < \pi$;
    \item is $s(s + 1 - n)$ if $\theta_0 = \frac{\pi}{2}$; and
    \item lies in $(s(s + 1 - n),\infty)$ if $0 < \theta_0 < \frac{\pi}{2}$.
  \end{itemize}
\end{proposition}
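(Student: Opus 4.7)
The plan is to combine a Lagrange (Wronskian) identity comparing a ground state of $L_s$ on $(0,\theta_0)$ with the explicit function $v_0(\theta) := \sin^{2s-n}(\theta)$, together with a Rayleigh identity coming from integration by parts. A direct substitution shows $L_s v_0 = s(s+1-n)\, v_0$ on $(0,\pi)$; moreover $v_0(0) = 0$, $v_0 > 0$ on $(0,\pi)$, and $v_0'(\pi/2) = 0$. Consequently $v_0$ is an admissible eigenfunction when $\theta_0 = \pi/2$, and being sign-definite on the open interval it realizes the smallest eigenvalue by standard Sturm--Liouville theory for a singular/regular problem in the limit-point case at $\theta = 0$. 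This pins down the middle case $\lambda_1(\pi/2) = s(s+1-n)$.

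For general $\theta_0 \in (0,\pi)$, let $v$ be a ground-state eigenfunction of $L_s$ with eigenvalue $\lambda_1$, normalized so that $v > 0$ on $(0,\theta_0)$; ODE uniqueness at the regular point $\theta_0$ combined with $v'(\theta_0) = 0$ forces $v(\theta_0) > 0$. Set $p(\theta) := \sin^{n+1-2s}(\theta)$, so the eigenvalue equations in divergence form read $(pv_0')' = -(2s-n)\, p\, v_0$ and $(pv')' = -(\lambda_1 - (s-1)(s-n))\, p\, v$. Defining $W(\theta) := p(\theta)\bigl(v_0'(\theta)v(\theta) - v_0(\theta)v'(\theta)\bigr)$, a short computation using these identities yields
\begin{equation*}
W'(\theta) = \bigl(\lambda_1 - s(s+1-n)\bigr)\, p(\theta)\, v_0(\theta)\, v(\theta).
\end{equation*}
The indicial roots of $L_s$ at $\theta = 0$ are $0$ and $2s-n$, and the condition $v(0) = 0$ forces $v \sim c\,\theta^{2s-n}$ near $0$; paired with $v_0 \sim \theta^{2s-n}$, the leading terms of $v_0'v$ and $v_0v'$ cancel, leaving $v_0'v - v_0 v' = O(\theta^{4s-2n+1})$ and hence $W = O(\theta^{2s-n+2}) \to 0$ at $\theta = 0$. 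At the right endpoint, $v'(\theta_0) = 0$ gives $W(\theta_0) = (2s-n)\cos(\theta_0)\, v(\theta_0)$. Integrating $W'$ across $(0,\theta_0)$,
\begin{equation*}
(2s-n)\cos(\theta_0)\, v(\theta_0) = \bigl(\lambda_1 - s(s+1-n)\bigr) \int_0^{\theta_0} p\, v_0\, v\, d\theta,
\end{equation*}
and since $v, v_0, p > 0$ on the interior and $2s > n$, the sign of $\lambda_1 - s(s+1-n)$ matches that of $\cos(\theta_0)$. This settles the comparison with $s(s+1-n)$ in all three ranges of $\theta_0$.

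For the lower bound $\lambda_1 > (s-1)(s-n)$ needed when $\theta_0 > \pi/2$, I multiply $L_s v = \lambda_1 v$ by $v\,p$ and integrate by parts on $(0,\theta_0)$; the boundary contributions vanish because $v \sim c\theta^{2s-n}$ with $2s > n$ kills the endpoint term at $0$, and $v'(\theta_0) = 0$ kills it at $\theta_0$. This gives the Rayleigh identity
\begin{equation*}
\lambda_1 = (s-1)(s-n) + \frac{\int_0^{\theta_0} (v')^2\, p\, d\theta}{\int_0^{\theta_0} v^2\, p\, d\theta},
\end{equation*}
and strict positivity of the quotient follows because $v \not\equiv 0$ and $v(0) = 0$ together force $v' \not\equiv 0$.

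I expect the main technical subtlety to be rigorously verifying $W(0) = 0$: the leading terms in $v_0'v$ and $v_0v'$ cancel exactly, so one needs a two-term Frobenius expansion of $v$ at the regular singular point $\theta = 0$. The condition $v(0) = 0$ selects the Frobenius branch corresponding to the larger indicial root $2s-n$; when $2s-n \in \mathbb{Z}$ a short check confirms this branch carries no logarithmic terms (logs appear only on the smaller-root branch in that exceptional case), giving $v = c\,\theta^{2s-n}(1 + O(\theta^2))$, which suffices for the cancellation. Discreteness of the spectrum and simplicity and sign-definiteness of the ground-state eigenfunction are standard consequences of the theory of singular Sturm--Liouville problems with one limit-point endpoint, and are used without further comment.
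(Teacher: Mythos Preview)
Your proof is correct and takes a genuinely different, cleaner route than the paper's. The paper splits into cases: for $\theta_0 \leq \pi/2$ it substitutes $u = \sin^{2s-n}(\theta)v$ and uses an integration-by-parts argument to rule out eigenvalues below $s(s+1-n)$, then handles $\lambda = s(s+1-n)$ by direct integration; for $\theta_0 > \pi/2$ it differentiates the eigenvalue equation to obtain a Dirichlet--Dirichlet problem for $u'$, then invokes a singular Sturm comparison theorem (quoted from an external reference) to force a zero of $u'$ on $(0,\pi/2)$ and derive a contradiction. Your single Lagrange identity comparing the ground state $v$ with the explicit solution $v_0 = \sin^{2s-n}(\theta)$ handles all three regimes at once: the sign of $\lambda_1 - s(s+1-n)$ is read off directly from the sign of $\cos(\theta_0)$ via the boundary value $W(\theta_0) = (2s-n)\cos(\theta_0)v(\theta_0)$, with no case distinction and no external comparison theorem needed. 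The lower bound $\lambda_1 > (s-1)(s-n)$ is essentially the same in both arguments (this is the paper's Lemma~\ref{llem}). What the paper's longer approach buys is perhaps a more self-contained treatment of the Sturm--Liouville background it relies on elsewhere; what yours buys is brevity and unity, at the cost of invoking positivity and simplicity of the ground state as a black box. Your handling of the subtle point $W(0^+) = 0$ via the two-term Frobenius expansion is correct: the larger-root branch $r = 2s-n$ never carries logarithms, and the leading-order cancellation in $v_0'v - v_0v'$ leaves $W = O(\theta^{2s-n+2}) \to 0$.
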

\begin{remark}
  In terms of the spectral parameter $\nu$, this says that $\lambda_{\nu}$ is not an eigenvalue for $\frac{n - 1}{2} - \left|\frac{2s - n - 1}{2}\right| \leq \nu \leq \frac{n - 1}{2} + \left|\frac{2s - n - 1}{2}\right|$, and that
  taking $\nu \geq \frac{n - 1}{2}$, the first eigenvalue occurs for $\nu$ in $(\frac{n - 1 + |2s - n - 1|}{2},s)$, at $s$, or in $(s,\infty)$, respectively.
\end{remark}

\begin{proof}
  It is immediate from Lemma \ref{llem} that the lowest eigenvalue $\lambda_0$ satisfies $\lambda_0 \geq (s - 1)(s - n)$. From the same lemma, it follows that
  the only solutions $u$ to $L_su = (s - 1)(s - n)u$ are the constant functions. These, however, do not satisfy $u(0) = 0$. Thus, $\lambda_0 > (s - 1)(s - n)$.

  Suppose $\theta_0 \in (0,\frac{\pi}{2}]$ and that $\lambda \in ((s - 1)(s - n),(s(s + 1 - n))$. Suppose that $u \in C^{\infty}([0,\theta_0])$
  satisfies our boundary conditions, and that
  that $L_su = \lambda u$.
  By considering once again the indicial roots of this equation, we can write
  $u(\theta) = \sin^{2s - n}(\theta)v(\theta)$, where $v(\theta)$ is smooth.
  The equation then transforms to
  \begin{equation}
    \label{tformveq}
    v''(\theta) + (2s + 1 - n)\cot(\theta)v'(\theta) + (\lambda + s(n - s - 1))v(\theta) = 0,
  \end{equation}
  and since $u'(\theta_0) = 0$, we conclude that 
  \begin{equation}
    \label{vbound}
    v'(\theta_0) = (n - 2s)\cot(\theta_0)v(\theta_0).
  \end{equation}
  Then we have
  \begin{equation}
  \begin{split}
    \sin^{2s + 1 - n}(\theta)v''(\theta) + (2s + 1 - n)&\sin^{2s - n}(\theta)\cos(\theta)v'(\theta)\\
      &= (s(s + 1 - n) - \lambda)\sin^{2s + 1 - n}(\theta)v(\theta).\label{veq}\\
  \end{split}
  \end{equation}
  Thus,
  \begin{align*}
    \frac{d}{d\theta}\left[ \sin^{2s + 1 - n}(\theta) v'(\theta) \right] &= (s(s + 1 - n) - \lambda)\sin^{2s + 1 - n}(\theta)v(\theta),\text{ so}\\
    \int_0^{\theta_0}v(\theta)\frac{d}{d\theta}\left[ \sin^{2s + 1 - n}(\theta)v'(\theta) \right] d\theta &= (s(n - s - 1) - \lambda)\int_0^{\theta_0}\sin^{2s + 1 - n}(\theta)
    v(\theta)^2d\theta.
  \end{align*}
    Integrating by parts, we get
  \begin{equation*}
  \begin{split}
    \left.\sin^{2s + 1 - n}(\theta)v(\theta)v'(\theta)\right|_{0}^{\theta_0} - &\int_0^{\theta_0}\sin^{2s + 1 - n}(\theta)v'(\theta)^2d\theta\\
    &= (s(s + 1 - n) - \lambda)\int_0^{\theta_0}\sin^{n + 1}(\theta)v(\theta)^2d\theta.\\
  \end{split}
  \end{equation*}
  Applying our boundary condition gives
  \begin{align*}
    (n - 2s)\sin^{2s - n}(\theta_0)\cos(\theta_0)v(\theta_0)^2 - \int_0^{\theta_0}\sin^{n + 1}(\theta_0)v'(\theta_0)^2d\theta =\\
    \qquad(s(s + 1 - n) - \lambda)
    \int_0^{\theta_0}\sin^{n + 1}(\theta)v(\theta)^2d\theta \geq 0
  \end{align*}
  Since $\theta_0 \leq \frac{\pi}{2}$ and $s > \frac{n}{2}$, it is plain that this equality can hold only if $v(\theta) \equiv 0$.

  Now suppose that $\lambda = s(s + 1 - n)$, and that (\ref{tformveq}) holds for some $v$ satisfying the boundary condition (\ref{vbound}). It follows immediately that
  $\sin(\theta)v''(\theta) + (2s + 1 - n)\cos(\theta)v'(\theta) = 0$, from which we conclude that $v'(\theta) = b\sin^{n - 1 - 2s}(\theta)$ for some $b$. Thus,
  for some $a$,
  \begin{equation*}
    v(\theta) = a + b\int_{\theta_0}^{\theta}\sin^{n - 1 - 2s}(\phi)d\phi.
  \end{equation*}
  Since $v$ is smooth and $s > \frac{n}{2}$, we conclude that $b = 0$. Then $v \equiv a$ can be nonvanishing and satisfy (\ref{vbound}) if and only if
  $\theta_0 = \frac{\pi}{2}$. In that case, we see that $u(\theta) = \sin^{2s - n}(\theta)$ is a solution to $L_su = \lambda u$.

  Before handling the last case, let $v(\theta) = u'(\theta) = \sin^{2s - n - 1}(\theta)\cos(\theta)$. Then by differentiating both sides of the equation
  $L_su = s(s + 1 - n)u$, we find that $v$ satisfies the equation
  \begin{equation*}
  \begin{split}
    -v''(\theta) + (2s - n - &1)\cot(\theta)v'(\theta) + (n + 1 - 2s)\csc^2(\theta)v(\theta) + \\
   &(s - 1)(s - n)v(\theta) = s(n + 1 - s)v(\theta),
  \end{split}
  \end{equation*}
  with boundary conditions $v(0) = 0 = v\left( \frac{\pi}{2} \right)$. Multiplying through by a factor of $-\sin^{n + 1 - 2s}(\theta)$, we can rewrite this equation
  as
  \begin{equation*}
  \begin{split}
    (\sin^{n + 1 - 2s}(\theta)v'(\theta))' +& [(2s - 1 - n)\csc^{2s + 1 - n}(\theta)\\
     & + (2s - n)\csc^{2s - 1 - n}(\theta)]v(\theta) = 0.
  \end{split}
  \end{equation*}
  Obviously from its definition, $v(\theta)$ has no zeros on $\left( 0,\frac{\pi}{2} \right)$. Notice also
  that $\int_0\frac{\sin^{2s - 1 - n}(\theta)}{v(\theta)^2}d\theta = \infty =
  \int^{\frac{\pi}{2}}\frac{\sin^{2s - 1 - n}(\theta)}{v(\theta)^2}d\theta$.
  Now let $\lambda > s(n + 1 - s)$, and suppose that $w(\theta)$ satisfies
  \begin{equation*}
  \begin{split}
    (\sin^{n + 1 - 2s}(\theta)w'(\theta))' + [(2s - &1 - n)\csc^{2s + 1 - n}(\theta)\\
     & + (\lambda - (s - 1)(s - n))\csc^{2s - 1 - n}(\theta)]w(\theta) = 0
  \end{split}
  \end{equation*}
  on $\left( 0,\frac{\pi}{2} \right)$. Then $w$ has at least one zero on $\left( 0, \frac{\pi}{2} \right)$ by Theorem \ref{sturmthm},
  since $\lambda - (s - 1)(s - n) > 2s - n$.

  Now suppose that $\theta_0 \in \left( \frac{\pi}{2},\pi \right)$. Let $\lambda_0 > 0$ be the lowest eigenvalue of $L_s$, with eigenfunction
  $u_0$ satisfying $u_0(0) = 0 = u_0'(\theta_0)$. Set $v_0(\theta) = u_0'(\theta)$. We have already shown that
  $\lambda_0 \neq s(s - 1 - n)$. Now differentiating both sides of $L_su = \lambda_0u$,
  we find that $v$ satisfies the equation
  \begin{equation}
  \begin{split}
    \label{diffeq}
    -v''(\theta) + (2s - 1 - n)\cot(\theta)v'(\theta) + (n + &1 - 2s)\csc^2(\theta)v(\theta)\\
    &+ (s - 1)(s - n)v(\theta) = \lambda_0 v(\theta)
  \end{split}
  \end{equation}
  with homogeneous Dirichlet conditions at both endpoints. Moreover, $\lambda_0$ must be the lowest eigenvalue of this boundary value problem as well, or
  we could produce a lower eigenvalue to $L_su = \lambda u$ by integration. Now as is well known, the lowest eigenfunction of a positive (or boundedly negative) operator with
  homogeneous Dirichlet boundary values is nonvanishing away from the endpoints. This can be shown, for example, by adapting the proof of Proposition
  5.2.4 of \cite{tay1} to the simpler ODE case, using the maximum principle given in Theorem 26.XVIII of \cite{wal98}. So we may conclude that $v(\theta)$ has
  no zeros on $(0,\theta_0)$, and in particular on $\left( 0,\frac{\pi}{2} \right)$. Thus, it follows that $\lambda_0 < s(s + 1 - n)$.
\end{proof}

We can in fact characterize all of the eigenvalues of $L_s$.

\begin{proposition}
  \label{charlem}
  The eigenvalues of $L_s$ are the values $\lambda_{\nu}$, where $\nu \geq \frac{n - 1}{2}$ runs over the non-negative solutions to the equation
  \begin{equation}
    \label{hypergeq}
    \left.\frac{d}{d\theta}\left[ \sin^{2s - n}(\theta)F_{s - \nu,\nu + s - n + 1}^{s - \frac{n}{2} + 1}\left( \sin^2\left( \frac{\theta}{2} \right) \right) \right]\right|_{\theta = \theta_0} = 0.
  \end{equation}
  Here $F_{ab}^c(x)$ is the hypergeomtric function.
\end{proposition}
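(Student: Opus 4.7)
The plan is to reduce the eigenvalue equation $L_s v = \lambda_\nu v$ (with $v(0) = 0$ and $v'(\theta_0) = 0$) to a standard hypergeometric equation by a two-step change of variables, and then interpret the remaining boundary condition at $\theta = \theta_0$ as the stated characterization.

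First, I would set $v(\theta) = \sin^{2s-n}(\theta) F(\theta)$. The indicial roots of $L_s v = \lambda_\nu v$ at $\theta = 0$ are $0$ and $2s - n$, so this rescaling extracts the branch compatible with the Dirichlet condition: since $s > n/2$, it gives $v(0) = 0$ automatically whenever $F$ is regular at $\theta = 0$. Next, I would change variables to $x = \sin^2(\theta/2)$, using $\sin\theta = 2\sqrt{x(1-x)}$, $\cos\theta = 1 - 2x$, and the elementary identities
$$\partial_\theta^2 = x(1-x)\partial_x^2 + \tfrac{1-2x}{2}\partial_x, \qquad \cot(\theta)\partial_\theta = \tfrac{1-2x}{2}\partial_x.$$
With $\alpha = s - \frac{n}{2}$, so that $\sin^{2s-n}(\theta) = 2^{2\alpha}(x(1-x))^\alpha$, a direct expansion shows that the $(1-2x)^2 F$ terms cancel (using $\alpha(\alpha-1) + \alpha(1-\alpha) = 0$) and yields
$$x(1-x)F'' + (\alpha+1)(1-2x)F' - (s-\nu)(\nu+s-n+1)F = 0,$$
after applying the identity $\nu(\nu + 1 - n) - s(s+1-n) = -(s-\nu)(\nu + s - n + 1)$ to the zeroth-order coefficient. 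This is precisely the hypergeometric equation with parameters $a = s - \nu$, $b = \nu + s - n + 1$, and $c = \alpha + 1 = s - \frac{n}{2} + 1$.

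Next, I would argue that the boundary condition $v(0) = 0$ selects, up to scalar, the regular solution $F = F^{s - \frac{n}{2} + 1}_{s-\nu,\, \nu+s-n+1}$. The indicial roots of the hypergeometric equation at $x = 0$ are $0$ and $1 - c = \frac{n}{2} - s < 0$; the second linearly independent solution behaves like $x^{1-c}$, so $v = (x(1-x))^\alpha F$ behaves like a nonzero constant near $\theta = 0$, violating the Dirichlet condition. (When $2s - n$ is a positive integer the second branch may acquire a logarithm, but the same constant leading term still excludes it.) Once $F$ is fixed as $F^c_{a,b}$, the remaining condition $v'(\theta_0) = 0$ translates directly into the vanishing of $\frac{d}{d\theta}\left[\sin^{2s-n}(\theta)F^c_{a,b}(\sin^2(\theta/2))\right]$ at $\theta = \theta_0$, which is equation (\ref{hypergeq}). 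The parametrization $\nu \geq (n-1)/2$ is justified by the symmetry $\lambda_\nu = \lambda_{n - 1 - \nu}$ under $\nu \mapsto n - 1 - \nu$, which would otherwise double count each eigenvalue.

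The principal obstacle is the algebraic reduction to hypergeometric form and the verification that its three parameters line up with those in the statement; this is tedious but routine. The subtler point is the indicial analysis at $\theta = 0$, which requires care in the resonant case where $2s - n$ is a non-negative integer and logarithms could in principle appear in the second branch.
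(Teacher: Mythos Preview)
Your proposal is correct and follows essentially the same route as the paper's proof: the substitution $v(\theta)=\sin^{2s-n}(\theta)F(\theta)$ followed by $x=\sin^2(\theta/2)$ to reduce to the hypergeometric equation with parameters $a=s-\nu$, $b=\nu+s-n+1$, $c=s-\tfrac{n}{2}+1$, then the indicial analysis at $x=0$ to select the regular branch and the Neumann condition at $\theta_0$ to obtain (\ref{hypergeq}). Your write-up is in fact a bit more thorough than the paper's, since you explicitly justify the restriction $\nu\geq \tfrac{n-1}{2}$ via the symmetry $\lambda_\nu=\lambda_{n-1-\nu}$ and address the resonant case $2s-n\in\mathbb{Z}$.
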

Note that by the identity $\frac{d}{dx}F_{ab}^c(x) = \frac{ab}{c}F_{a + 1,b + 1}^{c + 1}(x)$, the equation (\ref{hypergeq}) is equivalent to
\begin{equation}
\label{hyperchareq}
\begin{split}
    (2s - n)\cot(\theta_0)\csc(\theta_0)&F_{s - \nu,\nu + s - n + 1}^{s - \frac{n}{2} + 1}\left( \sin^2\left( \frac{\theta_0}{2} \right) \right) =\\
    &\frac{(\nu - s)(\nu + s - n + 1)}{2s - n + 2}F_{s - \nu + 1,\nu + s - n + 2}^{s - \frac{n}{2} + 2}\left( \sin^2\left( \frac{\theta_0}{2} \right) \right).
\end{split}
\end{equation}

\begin{proof}
  We look for solutions $u(\theta)$ to the equation $L_su(\theta) = \lambda_{\nu}u(\theta)$, satisfying $u(0) = 0 = u'(\theta_0)$. As shown before,
  such a solution will necessarily be $O(\theta^{2s - n})$. We thus write $u(\theta) = \sin^{2s - n}(\theta)v(\theta)$; it was shown earlier that
  $v$ then satisfies equation (\ref{tformveq}). We introduce the substitution $x = \sin^{2}\left( \frac{\theta}{2} \right)$, and set
  $v(\theta) = l(x(\theta))$. Then setting $a = s - \nu$, $b = \nu + s - n + 1$, and $c = s - \frac{n}{2} + 1$, equation (\ref{tformveq}) transforms
  to
  \begin{equation*}
    x(1 - x)l''(x) + (c - (1 + a + b)x)l'(x) - abl(x) = 0.
  \end{equation*}
  This is the hypergeometric equation, and the solution that is smooth at $x = 0$ is (up to scaling) the hypergeometric function
  $F_{ab}^c(x)$. Thus, we find $u(\theta) = \sin^{2s - n}(\theta)F_{n - \nu,\nu + 1}^{\frac{n}{2} + 1}\left( \sin^2\left( \frac{\theta}{2} \right) \right)$.
  The claim then follows by differentiating $u$ and requiring that $u'(\theta_0) = 0$.
\end{proof}

In the case $\theta_0 = \frac{\pi}{2}$ we can say much more.

\begin{proposition}
  \label{piprop}
  Let $L_s$ be as in Proposition \ref{specprop}, and $\theta_0 = \frac{\pi}{2}$. Then the spectrum is given in terms of the spectral
  parameter by
  $\spec(L_s) = \left\{\lambda_{\nu}: \nu =  s + 2k, \text{ where } k \in \mathbb{Z}^{\geq 0} \right\}$, and eigenfunctions are given by
  $w_k(\theta) = \sin^{2s - n}(\theta)C_{2k}^{s + \frac{1 - n}{2}}(\cos(\theta))$, where $C_j^{\alpha}$ are
  the Gegenbauer polynomials.
\end{proposition}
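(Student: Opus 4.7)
The plan is to combine the characterization from Proposition \ref{charlem}, specialized to $\theta_0 = \pi/2$, with Gauss's second theorem for the central value of a hypergeometric function. Since $\cot(\pi/2) = 0$, the left-hand side of equation (\ref{hyperchareq}) vanishes, and the eigenvalue condition reduces to
\begin{equation*}
(\nu - s)(\nu + s - n + 1)\, F_{s - \nu + 1,\, \nu + s - n + 2}^{s - n/2 + 2}\!\left(\tfrac{1}{2}\right) = 0.
\end{equation*}
For $\nu \geq (n - 1)/2$ and $s > n/2$, the factor $\nu + s - n + 1$ is strictly positive, so I need either $\nu = s$ or vanishing of the hypergeometric value at $1/2$.

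The hypergeometric parameters $a = s - \nu + 1$, $b = \nu + s - n + 2$, $c = s - n/2 + 2$ satisfy $c = (a + b + 1)/2$, so Gauss's second theorem applies and gives
\begin{equation*}
F_{s - \nu + 1,\, \nu + s - n + 2}^{s - n/2 + 2}\!\left(\tfrac{1}{2}\right) = \frac{\sqrt{\pi}\, \Gamma(s - n/2 + 2)}{\Gamma\!\left(\tfrac{s - \nu}{2} + 1\right)\, \Gamma\!\left(\tfrac{\nu + s - n + 3}{2}\right)}.
\end{equation*}
This vanishes precisely when one of the denominator Gamma factors has a pole. The first factor yields $\nu = s + 2k$ for integers $k \geq 1$. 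The second would give $\nu = n - s - 3 - 2m$ for $m \geq 0$, which under $s > n/2$ and $n \geq 2$ lies strictly below $(n - 1)/2$ and is therefore excluded by Proposition \ref{charlem}. Combined with the separate solution $\nu = s$ (the case $k = 0$), this gives the claimed spectrum. As a sanity check, the lowest eigenvalue $\lambda_s = s(s + 1 - n)$ agrees with Proposition \ref{specprop} for $\theta_0 = \pi/2$.

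For the eigenfunctions, the proof of Proposition \ref{charlem} exhibits
\begin{equation*}
u_\nu(\theta) = \sin^{2s - n}(\theta)\, F_{s - \nu,\, \nu + s - n + 1}^{s - n/2 + 1}\!\left(\sin^2(\theta/2)\right)
\end{equation*}
as, up to scaling, the unique solution of $L_s u = \lambda_\nu u$ that is $O(\theta^{2s - n})$ at $\theta = 0$. When $\nu = s + 2k$, the upper parameter $s - \nu = -2k$ is a non-positive integer, so $F$ terminates as a degree-$2k$ polynomial. Setting $\alpha = s + (1 - n)/2$ and $j = 2k$, the classical identity
\begin{equation*}
C_j^\alpha(\cos\theta) = \frac{(2\alpha)_j}{j!}\, F_{-j,\, j + 2\alpha}^{\alpha + 1/2}\!\left(\sin^2(\theta/2)\right)
\end{equation*}
identifies $u_\nu$, up to a nonzero multiplicative constant, with $\sin^{2s - n}(\theta)\, C_{2k}^{s + (1 - n)/2}(\cos\theta)$, as claimed. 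No substantive obstacle is anticipated beyond careful bookkeeping of hypergeometric parameters and confirming that the pole analysis respects the restriction $\nu \geq (n - 1)/2$.
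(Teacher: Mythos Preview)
Your proof is correct and takes a genuinely different route from the paper. The paper proceeds by a direct change of variables: writing $u(\theta) = \sin^{2s-n}(\theta)\, v(\cos\theta)$ transforms $L_s u = \lambda_\nu u$ into the Gegenbauer equation for $v$, and then the paper invokes the completeness of the Gegenbauer polynomials in $L^2([-1,1],(1-x^2)^{\alpha-1/2}\,dx)$ together with a parity argument (even-degree polynomials are even in $x$, hence give $u'(\pi/2)=0$) to conclude that the spectrum is exactly $\{\lambda_{s+2k}\}$.

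Your approach instead specializes the hypergeometric characterization of Proposition~\ref{charlem} to $\theta_0 = \pi/2$ and applies Gauss's second summation theorem to evaluate $F(a,b;\tfrac{a+b+1}{2};\tfrac12)$ in closed form, reading off the spectrum from the poles of the resulting Gamma factors. This has the advantage of building directly on the general-$\theta_0$ characterization, making the special case $\theta_0=\pi/2$ a transparent specialization rather than a separate computation; it also avoids appealing to completeness of an orthogonal-polynomial family. The paper's approach, on the other hand, is more self-contained (it does not require the summation identity) and makes the Gegenbauer structure of the eigenfunctions immediately visible, rather than recovering it afterward via the hypergeometric representation of $C_j^\alpha$. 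Both arguments are short; yours is perhaps slightly more systematic given the surrounding development, while the paper's is more elementary.
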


\begin{proof}
  In the equation $L_su = \lambda_{\nu}u$, we assume a solution of the form
  $u(\theta) = \sin^{2s - n}(\theta)v(\cos(\theta))$. This gives the equation
  \begin{equation}
    \begin{split}
    \sin^2(\theta)v''(\cos(\theta)) - &(2s - n + 2)\cos(\theta)v'(\cos(\theta))\\
    &+ [\nu(\nu + 1 - n) - s(s + 1 - n)]v(\cos(\theta)) = 0.
  \end{split}
  \end{equation}
  We make the substitution $x = \cos(\theta)$, and get the equation
  \begin{equation*}
    (1 - x^2)v''(x) - (2s - n + 2)xv'(x) + [\nu(\nu + 1 - n) - s(s + 1 - n)]v(x) = 0.
  \end{equation*}
  This is the Gegenbauer equation $(1 - x^2)v''(x) - (1 + 2\alpha)xv'(x) + k(k + 2\alpha)v(x) = 0$ with $\alpha = s + \frac{1 - n}{2}$ and
  $k = \nu - s$. Recall that a solution to this equation for each integer $k \geq 0$ is given by the $k$th-degree Gegenbauer polynomial
  $C_k^{\alpha}(x)$; that the polynomials $\left\{ C_k^{\alpha}(x) \right\}_{k = 0}^{\infty}$ are an orthonormal basis for
  $L^2\left( [-1,1],(1 - x^2)^{\alpha/2 - 1} \right)$;
  and that each polynomial $C_{k}^{\alpha}(x)$ has the same parity as $k$, with the even polynomials being nonvanishing at
  $0$. It follows that if we let $u_k(\theta) = \sin^{2s - n}(\theta)C_k^{\alpha}(\cos(\theta))$, then $u_k$ is a solution of the equation
  $L_su = \lambda_{s + k}u$, and that the solutions for $k$ even satisfy the condition that $u_k'\left( \frac{\pi}{2} \right) = 0$.
  By the orthonormal basis property of the Gegenbauer polynomials, it follows that every solution to the equation is one of the
  $u_k$; and we conclude that precisely the solutions for even $k$ satisfy our boundary conditions.
\end{proof}

We now turn to the mapping properties of $I_{s,\nu}$. Recall that the function spaces mentioned in the following proposition are defined in 
Section \ref{funcsubsec}.

\begin{proposition}
  \label{indexprop}
  Fix $n \geq 2 \in \mathbb{N}$, $s > \frac{n}{2}$, and $\theta_0 \in (0,\pi)$. For $\nu \geq n - s$, and $k \geq 1$,
  $I_{s,\nu}$ given by (\ref{iform}) maps $\mathcal{A}_{n,s,k}^0$ to $\mathcal{B}_{n,s,k}$.
  If $\lambda_{\nu} \notin \spec(L_s)$, then $I_{s,\nu}:\mathcal{A}_{n,s,k}^0 \to \mathcal{B}_{n,s,k}$ is bijective. Otherwise, $\dim \ker I_{s,\nu} = 1$ and
  the kernel is spanned by a smooth function $\beta_{s,\nu}$; the image in that case is the orthogonal complement in $\mathcal{B}_{n,s,k}$ of
  $\beta_{s,\nu}$ with respect to the measure $\sin^{-(1+n)}(\theta)d\theta$.

  When $\nu \notin \spec(L_s)$, the inverse of $I_{s,\nu}$ is given by 
  the Green's operator 
  \begin{equation}
    \label{greensop}
    Gf(\theta) = \sin^{n - s}(\theta)p(\theta)\int_0^{\theta}\frac{q(\phi)f(\phi)}{\sin^{s + 1}(\phi)}d\phi + \sin^{n - s}(\theta)q(\theta)\int_{\theta}^{\theta_0}
    \frac{p(\phi)f(\phi)}{\sin^{s + 1}(\phi)}d\phi,
  \end{equation}
  where $q(\theta) = O(\theta^{2s - n})$ is smooth, with $\sin^{s - n}(\theta)q(\theta)$ even in $\theta$; and
  \begin{itemize}
    \item if $s - \frac{n}{2} \notin \mathbb{Z}$ or $\nu \in \left\{ n - s,n - s + 1,\cdots,\frac{n}{2} - 1 \right\}$, then $p(\theta)$ is smooth and even in $\theta$; while
    \item if $s - \frac{n}{2} \in \mathbb{Z}$ and $\nu \notin \left\{ n - s,n - s + 1,\cdots,\frac{n}{2} - 1 \right\}$, 
      then $p(\theta) \in C^{\infty}([0,\theta_0]) + \theta^{2s - n}\log(\theta)C^{\infty}([0,\theta_0])$, and is even
      up to order $2s - n$.
  \end{itemize}
  If $n$ is odd and $s \geq n$ is an integer, and if $f \in \mathcal{B}_{n,s,k}$ is even in $\theta$ through order $s + 1$, then $Gf$ is smooth.
\end{proposition}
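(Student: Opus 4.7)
The central idea is to exploit the factorization identity (\ref{ilrel}): via the substitution $u(\theta) = \sin^{n-s}(\theta)v(\theta)$ one has $I_{s,\nu}u = \sin^{n-s+2}(\theta)(\lambda_\nu - L_s)v$. Moreover, the boundary conditions translate cleanly: the Dirichlet condition $\sin^{s-n}u|_{\theta=0}=0$ becomes $v(0)=0$, and a short computation shows the Robin condition at $\theta_0$ becomes the homogeneous Neumann condition $v'(\theta_0)=0$ for $v$. Thus the whole analysis reduces to the Sturm-Liouville problem for $\lambda_\nu - L_s$ with these boundary conditions, for which Lemma \ref{llem} provides the needed self-adjoint framework, and Proposition \ref{specprop} and Proposition \ref{charlem} describe the spectrum.

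For the mapping property, I plan to substitute the explicit form of an arbitrary $u \in \mathcal{A}_{n,s,k}^0$ into the factorized identity and compute term-by-term in the log-expansion. The crucial cancellation is that the condition $v(0)=0$ together with the single $1/\theta$ singularity arising from $\cot(\theta)v'$ in $L_s$ ensures $L_sv$ has at most a $1/\theta$ singularity; after multiplication by $\sin^{n-s+2}(\theta)$ this lands in $\mathcal{B}_{n,s,k}$, with the power-of-log structure of the result determined mechanically by that of $v$ (a $(\log\theta)^i$ factor in $v$ produces at worst a $(\log\theta)^i$ factor in $L_sv$, matching the definitions of $\mathcal{A}_{n,s,k}$ and $\mathcal{B}_{n,s,k}$).

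Away from the spectrum, injectivity is immediate from the factorization. For surjectivity, the plan is to construct the Green's operator directly: take $q(\theta)$ to be the Frobenius solution of $(\lambda_\nu - L_s)\tilde{q}=0$ with leading behavior $\theta^{2s-n}$, and $p(\theta)$ the (unique up to scaling) solution satisfying $p'(\theta_0)=0$, normalized so that the Wronskian satisfies $W(p,q)\sin^{n+1-2s}\equiv -1$. A direct computation shows that if one applies $I_{s,\nu}$ to $Gf$, the interior terms cancel via $L_sp = \lambda_\nu p$ and $L_sq = \lambda_\nu q$, while the jump across the diagonal produces exactly the source $f$. The regularity and parity properties of $p$ and $q$ are then read off from indicial analysis at $\theta=0$: $q$ is always an even power series in $\theta$ beginning at $\theta^{2s-n}$; $p$ begins with a constant, and logs appear in $p$ at order $\theta^{2s-n}$ exactly when $2s-n$ is a non-negative even integer and the Frobenius recursion at that order is obstructed. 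The polynomial reduction of the hypergeometric function from Proposition \ref{charlem} for $\nu\in\{n-s,\ldots,n/2-1\}$ shows the obstruction vanishes in that subcase, giving the stated alternative.

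When $\lambda_\nu\in\spec(L_s)$, the Fredholm alternative applies via the Sturm-Liouville theory: the kernel is one-dimensional, spanned by an eigenfunction $\tilde\beta$ of $L_s$ with $\tilde\beta(0)=0$, and the image of $\lambda_\nu-L_s$ on the relevant space is the orthogonal complement of $\tilde\beta$ in $L^2(\sin^{n+1-2s}d\theta)$. Translating back through the factorization and the factor $\sin^{n-s+2}$ identifies $\beta_{s,\nu}=\sin^{n-s}\tilde\beta$ and converts the measure into $\sin^{-(n+1)}(\theta)d\theta$; the pairing is absolutely convergent at $\theta=0$ since elements of $\mathcal{B}_{n,s,k}$ vanish to order $n-s+1$ and $\beta_{s,\nu}$ grows no faster than $\theta^s$. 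For the final smoothness statement, with $n$ odd, integer $s\geq n$, and $f\in\mathcal{B}_{n,s,k}$ even through order $s+1$, the plan is to use the explicit parities of $p$, $q$, and $f$ to verify that the integrals in the Green's formula produce only powers of $\theta$ that combine with the prefactor $\sin^{n-s}(\theta)$ to yield a smooth function; the evenness of $f$ through order $s+1$ is precisely what annihilates the would-be non-smooth contributions from the lower endpoint of $\int_0^\theta$.

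The main obstacle will be the bookkeeping around the log-regularity of $p$ in the integral case $s-n/2\in\mathbb{Z}_{\geq 0}$: verifying that the hypergeometric polynomial reduction for $\nu\in\{n-s,\ldots,n/2-1\}$ yields precisely the stated no-log conclusion requires either a direct Frobenius recursion argument or a careful invocation of the classical identities for hypergeometric functions at integer parameter values, and the parity claim ``even up to order $2s-n$'' in the log case requires identifying the obstruction to higher evenness as residing entirely in the log coefficient.
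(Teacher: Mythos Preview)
Your proposal is correct and follows essentially the same route as the paper: both reduce to the Sturm--Liouville problem for $L_s$ via the factorization (\ref{ilrel}), build the Green's operator from the two Frobenius-type solutions $p,q$ (using Abel's identity for the Wronskian and the hypergeometric reduction from Proposition~\ref{charlem} to pin down the log behavior of $p$), and handle the spectral case by self-adjointness with respect to the weighted measure. The paper's only additional wrinkle is a two-step argument for the regularity of $Gf$: it first shows by direct expansion of the integrals that $Gf$ lands in $\mathcal{A}_{n,s,k}$ plus a possible extra $\theta^{n-s+(k+1)(2s-n)}\log(\theta)^{k+1}$ term, then observes that since $I_{s,\nu}Gf=f\in\mathcal{B}_{n,s,k}$ and this order is not an indicial root, the extra term must vanish---you should be prepared for this maneuver when carrying out the log-bookkeeping you flagged as the main obstacle.
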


\begin{proof}
  For convenience, we set $I = [0,\theta_0]$.

  It is straightforward that $I_{s,\nu}$ maps $\mathcal{A}_{n,s,k}^0$ into $\mathcal{B}_{n,s,k}$. The anomaly in the definition
  of $\mathcal{B}_{n,s,1}$ is because $s$ is an indicial root of $I_{s,\nu}$, so that $I_{s,\nu}(\theta^su) = O(\theta^{s + 1})$ for any
  $u \in C^{\infty}(I)$.

  We first wish to identify independent global solutions to the equation $I_{s,\nu}u = 0$. In fact, it will again be easier to
  consider the operator $\nu(\nu + 1 - n) - L_s$ acting on $\sin^{s - n}(\theta)u(\theta)$. 
  As before, we set $x = \sin^2\left( \frac{\theta}{2} \right)$,
  and let $l(x)$ be defined by $u(\theta) = \sin^{n - s}(\theta)l(x(\theta))$. Then transforming the equation
  $(\lambda_{\nu} - L_s)u = 0$ yields the equation
  \begin{equation}
  \begin{split}
    \label{xformedeq}
    x(1 - x)l''(x) + &\left(\frac{n}{2} + 1 - s - (n + 2(1 - s))x\right)l'(x) +\\
    &[(s - 1)(n - s) + \nu(\nu + 1 - n)]l(x) = 0.
  \end{split}
  \end{equation}
  Let $a = n - s -\nu$, let $b = \nu + 1 - s$, and let $c = \frac{n}{2} + 1 - s$. Then (\ref{xformedeq}) becomes
  \begin{equation*}
    x(1 - x)l''(x) + (c - (1 + a + b)x)l'(x) - abl(x) = 0,
  \end{equation*}
  which is again the hypergeometric differential equation. The indicial roots of the equation at $x = 0$ are $\gamma = 0$
  and $\gamma = s - \frac{n}{2}$. It follows that we can find two independent solutions $u_0$ and $u_s$ having the following properties:
  $u_s \in C^{\infty}(I)$ and $u_s = O(\theta^s)$, while $u_0(0) \neq 0$. Furthermore, $\sin^{s - n}(\theta)u_s(\theta)$ is even in $\theta$.
  If $s - \frac{n}{2} \notin \mathbb{Z}$, 
  then $u_0 \in C^{\infty}(I)$, while if $s - \frac{n}{2} \in \mathbb{Z}$,
  then it follows by standard hypergeometric theory (for example, paragraph 15.10(c) of \cite{nist}) that $u_0 \in C^{\infty}(I)$ if
  $\nu = n - s, n - s + 1,\cdots,\frac{n}{2} - 1$, and that $u_0 \in C^{\infty}(I) + \theta^{2s - n}\log(\theta)C^{\infty}(I)$ otherwise, with nonzero
  logarithmic coefficient.
  
  We already know that $I_{s,\nu}$ is injective if and only if $\lambda_{\nu} \notin \spec L_s$.
  We now show that $I_{s,\nu}$ is surjective whenever it is injective. Let $q(\theta)$
  be the solution to $I_{s,\nu}(\sin^{n - s}(\theta)q) = 0$ with $\lim_{\theta \to 0^+}q(0)/\theta^{2s - n} = 1$. 
  Let $p(\theta)$ be the solution
  to $I_{s,\nu}(\sin^{n - s}(\theta)p) = 0$ with $p'(\theta_0) = 0$ and $p(0) = 1$ (this can be taken to be nonzero by the assumption
  of injectivity). It follows that
  $p$ is some linear combination of $u_0$ and $u_s$ with nontrivial coefficient for $u_0$.
  Now by Abel's identity, the Wronskian of $\sin^{n - s}(\theta)p(\theta)$ and $\sin^{n - s}(\theta)q(\theta)$ is $c\sin^{n - 1}(\theta)$ for some
  $c \neq 0$. It thus follows from standard formulas that the Green's function for $I_{s,\nu}$ is given by
  \begin{equation*}
    \Phi(\theta,\phi) = \left\{\begin{array}{lr}
      \frac{\sin^{n - s}(\theta)q(\theta)p(\phi)}{c\sin^{s + 1}(\phi)} & \theta < \phi\\
      \frac{\sin^{n - s}(\theta)q(\phi)p(\theta)}{c\sin^{s + 1}(\phi)} & \theta \geq \phi
    \end{array}\right.,
  \end{equation*}
  and it is elementary to show that the Green's operator 
  \begin{equation}
    \label{greensoplocal}
    Gf(\theta) = \sin^{n - s}(\theta)p(\theta)\int_0^{\theta}\frac{q(\phi)f(\phi)}{c\sin^{s + 1}(\phi)}d\phi + \sin^{n - s}(\theta)q(\theta)\int_{\theta}^{\theta_0}\frac{p(\phi)f(\phi)}
    {c\sin^{s + 1}(\phi)}d\phi
  \end{equation}
  is a right inverse to $I_{s,\nu}$ when $f \in \mathcal{B}_{n,s,k}$, and a left inverse
  to $I_{s,\nu}$ when $f = I_{s,\nu}u$ with $u \in \mathcal{A}_{n,s,k}^0$. (For the formula in the statement, we may absorb $c$ into $q$.)
  We wish to show that the equation $I_{s,\nu}u = f$ can be solved whenever $f \in \mathcal{B}_{n,s,k}$, and that
  in particular, $G$ maps $\mathcal{B}_{n,s,k}$ to $\mathcal{A}_{n,s,k}^0$. It is easy to check that
  $(Gf)'(\theta_0) + (s - n)\cot(\theta_0)(Gf)(\theta_0) = 0$ and that, for $f \in \mathcal{B}_{n,s,k}$, $Gf(0) = 0$. 
  Thus, if $Gf \in \mathcal{A}_{n,s,k}$, then
  $Gf \in \mathcal{A}_{n,s,k}^0$. It remains to show that $Gf(\theta) \in \mathcal{A}_{n,s,k}$.

  We do this in two steps. We first show that $G$ maps $\mathcal{B}_{n,s,k}$ into $\mathcal{A}_{n,s,k} + 
  \theta^{n - s + (k + 1)(2s - n)}\log(\theta)^{k + 1}C^{\infty}(I)$ by asymptotically expanding the integrands in (\ref{greensoplocal}) and considering the kinds
  of terms that can arise.

  Suppose $f \in \theta^{n - s + 1}C^{\infty}(I)$. Then since $q(\phi) = O(\phi^{2s - n})$, the integral in the first term of (\ref{greensoplocal}) is smooth.
  The factor $p(\theta)$ may contain a term with a factor of $\theta^{2s - n}\log(\theta)$ if $s - \frac{n}{2} \in \mathbb{Z}$. Thus, the first term is in $\mathcal{A}_{n,s,k}$.

  Because, in general, $\theta^j\int_0^{\theta}\theta^{-k} d\theta$ is smooth for $0 \leq k \leq j$ unless $k = 1$, the integral in the second term is smooth unless either
  $p(\phi)$ has a $\phi^{2s - n}\log\phi$ term in it (which could happen if $s - \frac{n}{2} \in \mathbb{Z}$), or there is a nonvanishing term of order
  $\phi^{-1}$ in the expansion of the integrand. In the first case, when $p(\phi)$ has a term in its expansion of the form
  $\phi^{2s - n}\log(\phi)$, the term yields a log at order $\theta^{s + 1}\log(\theta)$ and at higher powers in $\theta$. When
  the integrand contains a term of order $\phi^{-1}$, the second term yields a term of the form $\theta^s\log(\theta)$. Thus, in this case,
  $Gf \in \mathcal{A}_{n,s,k}$.

  Now suppose that $f \in \theta^{s + 1}\log(\theta)C^{\infty}(I)$.  Then the first integral
  yields a log at order $\theta^{s + 1}\log(\theta)$, and a possible $\log(\theta)^2$ at order $\theta^{3s + 1 - n}\log(\theta)^2$, if $p(\theta)$ has a term
  of the form $\theta^{2s - n}\log(\theta)$.

  When the integrand of the second term is expanded in an asymptotic series, we get several nonsmooth terms. First, at lowest order we get
  a term of the form $\theta^{s + 1}\log(\theta)$, with logs at higher orders as well. Now, it is an elementary result that
  \begin{equation*}
    \int_0^{\theta}\theta^j\log(\theta)^2d\theta = \theta^{j + 1}\left( 2(1 + j)^{-3} - 2(1 + j)^{-2}\log(\theta) + (1 + j)^{-1}\log(\theta)^2 \right).
  \end{equation*}
  Thus, if $p(\phi)$ has a term of the form
  $\phi^{2s - n}\log(\phi)$, then we get a term of the form $\theta^{3s + 1 - n}\log(\theta)^2$ as well.
  If $k = 1$, we have shown, as desired,
  that $Gf \in \mathcal{A}_{n,s,k} + \theta^{n - s + (k + 1)(2s - n) + 1}\log(\theta)^{k + 1}C^{\infty}(I)$. If $k > 1$, then in both the cases so far
  considered, we have seen that $Gf \in \mathcal{A}_{n,s,k}$.

  Now suppose that $k > 1$ and that $f(\theta) \in \theta^{n - s}(\theta^{2s - n}\log(\theta))^jC^{\infty}(I)$ for some $2 \leq j \leq k$. Taking account of
  the possible $\theta^{2s - n} \log(\theta)$ term in $p(\theta)$, an integral formula analogous to the above shows that the first term yields a function in
  $\theta^{n - s}\oplus_{i = 0}^{j + 1}(\theta^{2s - n}\log(\theta))^iC^{\infty}(I)$. This clearly lies in the desired space. It is easy to see that the second term
  lies in the same space.

  Thus, we see that $Gf \in \mathcal{A}_{n,s,k} + \theta^{n - s + (k + 1)(2s - n)}\log(\theta)^{k + 1}C^{\infty}([0,\theta_0])$. Our second step is now to show that, in fact,
  the last term does not arise. We have seen that $I_{s,\nu}Gf = f \in \mathcal{B}_{n,s,k}$. But for no $k \geq 1$ is $n - s + (k + 1)(2s - n)$ an incidial root
  of the operator $I_{s,\nu}$ at $\theta = 0$. Thus, if $0 \not\equiv u \in C^{\infty}(I)$, then $I_{s,\nu}(\theta^{n - s + (k + 1)(2s - n)}\log(\theta)^{k + 1}u)$ yields a term in
  $\theta^{n - s + (k + 1)(2s - n)}\log(\theta)^{k + 1}C^{\infty}(I)$, which is not canceled by any other term, and this precludes $I_{s,\nu}u$ from lying in $\mathcal{B}_{n,s,k}$. Thus, we must in fact have that
  $G:\mathcal{B}_{n,s,k} \to \mathcal{A}_{n,s,k}^0$.
 
  Now $I_{s,\nu}$ is injective, and it has a right inverse. Thus, it is a bijection.

  It remains to show that if $I_{s,\nu}$ is \emph{not} injective, then it is also not surjective. Suppose that $\ker I_{s,\nu} \neq \emptyset$.
  Rename $q$ so that $0 \neq q$ satisfies $\sin^{n - s}(\theta)q(\theta) \in \ker I_{s,\nu} \subseteq \mathcal{A}_{n,s,k}^0$, and note that $q = O(\theta^{2s - n})$. 
  Then Lemma \ref{llem} (\ref{llemb}) together with (\ref{ilrel}) makes it clear that any $f \in \image I_{s,\nu}$ must be orthogonal
  to $q$ with respect to measure $\sin^{-(s + 1)}(\theta) d\theta$. Since $v = \sin(\theta)q \in \mathcal{B}_{n,s,k}$ is clearly not orthogonal
  to $q$, we conclude that $I_{s,\nu}$ is not surjective.

  If $q$ is orthogonal to $f$ with respect to the measure $\sin^{-(s + 1)}(\theta)$, then let $p$ be any solution of the equation
  $I_{s,\nu}(\sin^{n - s}(\theta)p(\theta)) = 0$ with $p(\theta) = 1 + o(1)$. 
  Then it is easy to check that with $p$ and $q$ reinterpreted according to these definitions,
  equation (\ref{greensoplocal}) still gives a solution, of course not unique, to the equation $I_{s,\nu}u = f$, with
  $\left.u(\theta)/\sin^{n - s}(\theta)\right|_{\theta = 0} = 0 = u'(\theta_0) + (s - n)\cot(\theta_0)u(\theta_0)$. 
  The hypothesis that $q$ is orthogonal to $f$ is necessary in showing that the Robin condition holds.
  Note that orthogonality to $q$ with respect to the measure $\sin^{-(s + 1)}(\theta)d\theta$ is equivalent to orthogonality to
  $\beta_{s,\nu} = \sin^{n - s}(\theta)q(\theta)$ with respect to the measure $\sin^{-(n + 1)}(\theta)d\theta$.

  We finally turn to the last claim. If $n$ is odd and $s \geq n$ is an integer, and if $f \in \mathcal{B}_{n,s,k}$ is even through order $s + 1$, then no
  term of the form $\theta^{-1}$ can appear in the integrand in (\ref{greensop}), and no logarithms appear in $p$ or $q$. This
  yields the claim.
\end{proof}

This result will be used, for general $k$, in our formal solution of the Einstein equations in the next section. 

\subsection{The Laplacian}\label{lapsec}

We now continue to study the Laplace boundary value problem at hand. Recall that we are letting $\tX = [0,\theta_0] \times S \times [0,\varepsilon)$ be the pullback of the blowup of a cornered space
by a diffeormophism as in Corollary \ref{polarcor}, and are assuming that $g$ is the pullback of a constant-angle admissible metric on the blowup, in the form
(\ref{formcopy2}). The goal of this section is to prove
Theorem \ref{lapthm}. This problem
requires only the $k = 1$ case of Proposition \ref{indexprop} since it deals with a linear equation.
For this section, we specialize to the case $\theta_0 = \frac{\pi}{2}$, for which we have a full, explicit solution to the
eigenvalue problem. The techniques we use would be of relevance in studying other cases as well, although the behavior would depend crucially on the
spectrum, which might display a variety of behaviors in general.

We first undertake some preparations in the case that $2s \in \mathbb{Z}$, which we now assume.

Now it is easy to show that for $v \in C^{\infty}(\tS)$,
\begin{equation*}
  \begin{split}
    \Delta_g\left( \rho^{\nu}\log(\rho)^kv \right) + s(&n - s)\left( \rho^{\nu}\log(\rho)^kv \right) = \\
    &\rho^{\nu}\left[ \log(\rho)^kI_{s,\nu}v + k(1 + \nu - k)
\log(\rho)^{k - 1}\sin^2(\theta)v\right.\\
&\left.+k(k - 1)\log(\rho)^{k - 2}\sin^2(\theta)v\right] + o(\rho^{\nu}).
\end{split}
\end{equation*}
Motivated by this, we define an operator $J_{s + q}$ on $\mathcal{E}_{s,q}$ (see page \pageref{Enq}) by setting
\begin{equation*}
  \begin{split}
  J_{s + q}(\rho^{s + q}\log(\rho)^kb) =& \rho^{s + q}(\log(\rho)^kI_{s,s + q}(b) + k(1 + s + q - k)\log(\rho)^{k - 1}\sin^2(\theta)b\\
  &+ k(k - 1)\sin^2(\theta)\log(\rho)^{k - 2}b),
\end{split}
\end{equation*}
where $b \in \mathcal{A}_{n,s,1}^0$, and extending linearly.
For $q$ even, we define 
\begin{equation*}
  \mathcal{E}_{s,q}^0 = \left\{ \eta = \rho^{s + q}\sum_{i = 0}^{\frac{q}{2} + 1}\log(\rho)^ib_i(\theta): b_{\frac{q}{2} + 1} \in \ker I_{s,s + q} \right\}.
\end{equation*}
For consistency of notation, we define $\mathcal{E}_{s,q}^0 = \mathcal{E}_{s,q}$ when $q$ is odd.

\begin{proposition}
  \label{jprop}
  Let $2s \in \mathbb{Z}$.
  For $q \geq 1$ odd, $J_{s + q}:\mathcal{E}_{s,q} \to \mathcal{F}_{s,q}$ is an isomorphism.
  For $q \geq 0$ even, $J_{s + q}:\mathcal{E}_{s,q}^0 \to \mathcal{F}_{s,q}$ is surjective, with a one-dimensional kernel spanned
  by $\rho^{s + q}\sin^{n - s}(\theta)w_{\frac{q}{2}}(\theta)$, where $w_{\frac{q}{2}}$ is as in Proposition \ref{piprop}.
\end{proposition}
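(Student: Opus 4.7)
The plan is to expand both sides of $J_{s+q}\eta = \phi$ in powers of $\log\rho$, reduce to a triangular system of ODE's for the coefficient functions, and apply Proposition \ref{indexprop}. Writing $\eta = \rho^{s+q}\sum_{i} \log(\rho)^i b_i$ and $\phi = \rho^{s+q}\sum_j \log(\rho)^j c_j$, one reads off directly from the definition of $J_{s+q}$ the system
\begin{equation*}
c_j = I_{s,s+q}(b_j) + (j+1)(s+q-j)\sin^2(\theta)\,b_{j+1} + (j+2)(j+1)\sin^2(\theta)\,b_{j+2},
\end{equation*}
with $b_i = 0$ for $i$ outside the permitted range. A preliminary check from the definitions of $\mathcal{A}_{n,s,1}^0$ and $\mathcal{B}_{n,s,1}$ shows $\sin^2(\theta)\mathcal{A}_{n,s,1}^0 \subset \mathcal{B}_{n,s,1}$, so each equation is a well-posed problem for $b_j \in \mathcal{A}_{n,s,1}^0$ with right-hand side in $\mathcal{B}_{n,s,1}$.

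For $q$ odd, Proposition \ref{piprop} gives $\lambda_{s+q}\notin\spec L_s$, so Proposition \ref{indexprop} makes $I_{s,s+q}$ a bijection from $\mathcal{A}_{n,s,1}^0$ to $\mathcal{B}_{n,s,1}$. Since $\eta$ and $\phi$ share the same top index $K = (q+1)/2$, solving from $j=K$ downward uniquely determines each $b_j$ in terms of $c_j$ and the already-computed $b_{j+1}, b_{j+2}$, which yields the isomorphism claim.

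For $q$ even, $\lambda_{s+q}$ is an eigenvalue of $L_s$ with one-dimensional eigenspace $\mathbb{R}\beta$, where $\beta := \sin^{n-s}(\theta)w_{q/2}(\theta)$; one checks using the explicit Gegenbauer formula of Proposition \ref{piprop} that $\beta \in \mathcal{A}_{n,s,1}^0$ (evenness of $C_q^{\alpha}$ produces the required Robin condition at $\theta_0 = \pi/2$). By Proposition \ref{indexprop}, $\image I_{s,s+q}$ is the $\beta$-orthogonal complement in $\mathcal{B}_{n,s,1}$ with respect to $\sin^{-(n+1)}(\theta)\,d\theta$. The top index of $\eta$ exceeds that of $\phi$ by one, so the $j = K = q/2+1$ equation forces $b_K = \alpha_K \beta$, matching the definition of $\mathcal{E}_{s,q}^0$. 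Solvability of the $j = K - 1$ equation requires pairing the right-hand side with $\beta$, giving a linear equation in the free parameter $\alpha_K$ with coefficient $(q/2+1)(s+q/2)\langle \sin^2(\theta)\beta,\beta\rangle \neq 0$; this pins down $\alpha_K$ and leaves $b_{K-1} = \tilde{b}_{K-1} + \alpha_{K-1}\beta$ with $\alpha_{K-1}$ free. Iterating downward, at stage $j$ the orthogonality condition is a nondegenerate linear equation in $\alpha_{j+1}$ (the coefficient $(j+1)(s+q-j)$ is strictly positive for $0 \leq j < K$), uniquely fixing $\alpha_{j+1}$ and leaving $b_j$ determined modulo $\beta$. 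Only $\alpha_0$ remains free at the end, producing both surjectivity onto $\mathcal{F}_{s,q}$ and the one-dimensional kernel spanned by $\rho^{s+q}\sin^{n-s}(\theta) w_{q/2}(\theta)$.

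The main delicate point is verifying that this cascade of orthogonality conditions in the even case always admits a solution in the unique remaining scalar, rather than becoming an obstruction on data already fixed. This reduces to the two positivity statements that the combinatorial factors $(j+1)(s+q-j)$ and the inner product $\langle \sin^2(\theta)\beta,\beta\rangle$ are strictly positive, both of which are immediate; so I do not expect this to be a serious obstacle.
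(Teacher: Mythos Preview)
Your proof is correct and follows essentially the same approach as the paper: both expand in powers of $\log\rho$, reduce to a triangular system governed by $I_{s,s+q}$, and invoke Propositions \ref{piprop} and \ref{indexprop}; you write the recursion $c_j = I_{s,s+q}(b_j) + (j+1)(s+q-j)\sin^2(\theta)b_{j+1} + (j+2)(j+1)\sin^2(\theta)b_{j+2}$ explicitly, while the paper phrases the same step as iteratively subtracting $J\eta^{(l)}$ and adjusting by a multiple of the kernel element at each stage. The key nondegeneracy you isolate, namely $(j+1)(s+q-j)\langle\sin^2(\theta)\beta,\beta\rangle \neq 0$ for $0 \le j < K$, is exactly what the paper uses implicitly when it writes down the explicit coefficient $\frac{1}{(j+1)(s+j)}\frac{\langle c_j,w_j\rangle}{\langle\sin^2(\theta)w_j,w_j\rangle}$.
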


\begin{proof}
  That $J_{s + q}$ has the given codomains follows immediately from the definitions of $\mathcal{E}_{s,q}$, $\mathcal{E}_{s,q}^0$, $\mathcal{F}_{s,q}$, and $J$.

  For the isomorphism claim, we assume $q = 2j + 1$ is odd. We set $J = J_{s + q}$. By Propositions \ref{piprop} and \ref{indexprop},
  $I_{s,s + 2j + 1}:\mathcal{A}_{n,s,1}^0 \to \mathcal{B}_{n,s,1}$ is a bijection. So suppose we wish to solve
  $J\eta = f$, where $\eta = \rho^{s + 2j + 1}\sum_{i = 0}^{j + 1}\log(\rho)^ib_i(\theta)$ and $f = \rho^{s + 2j + 1}\sum_{i = 0}^{j + 1}\log(\rho)^ic_i(\theta)$.
  We will do this term by term, starting with the highest power of $\log$ and working down.
  We can uniquely solve $I_{s,s + 2j + 1}b_{j + 1} = c_{j + 1}$. Set $\eta^{(j + 1)} = \rho^{s + 2j + 1}\log(\rho)^{j + 1}b_{j + 1}(\theta)$, and note
  that $f - J\eta^{(j + 1)} = \rho^{s + 2j + 1}\sum_{i = 0}^j\log(\rho)^ic_i^{(j + 1)}(\theta)$, where $c_i^{(j + 1)}$ may differ from $c_i$. Now suppose,
  by way of induction, that we have constructed $\eta^{(l)} \in \mathcal{E}_{s,2j + 1}$ such that $f - J\eta^{(l)} = \rho^{s + 2j + 1}
  \sum_{i = 0}^{l - 1}\log(\rho)^ic_i^{(l)}$.
  Then we can uniquely solve the equation $I_{s,s + 2j + 1}b_{l - 1} = c_{l - 1}^{(l)}$, and setting $\eta^{(l - 1)} = \eta^{(l)} + \rho^{s + 2j + 1}\log(\rho)^{l - 1}b_{l - 1}$,
  we easily get $f - J\eta^{(l - 1)} = \rho^{s + 2j + 1}\sum_{i = 0}^{l - 2}\log(\rho)^ic_i^{(l - 1)}$. Thus, by induction, we can solve
  $J\eta = f$. At each step, the solution is unique, and so we see that $J$ is an isomorphism.

  We now address the even case, $q = 2j$. In this case, $I_{s,s + 2j}$ has a one-dimensional kernel spanned by $w_{\frac{q}{2}}$, that is, $w_j$.
  We wish to solve the equation $J\eta = f$, where we take $\eta = \rho^{s + 2j}\sum_{i = 0}^{j + 1}\log(\rho)^ib_i(\theta)$ and
  $f = \rho^{n + 2j}\sum_{i = 0}^j\log(\rho)^ic_i(\theta)$. We cannot necessarily solve the equation $I_{n + 2j}b_j = c_j$, since
  $c_j$ may not be orthogonal to $w_j$, generically (see Proposition \ref{indexprop}). However, we may solve this using our freedom in the
  $\log(\rho)^{j + 1}$ term. First, notice that $\sin^2(\theta)w_j$ is not orthogonal to $w_j$. Now
  we define the coefficient $b_{j + 1} = \frac{1}{(j + 1)(s + j)}\frac{\langle c_j,w_j\rangle}{\langle \sin^2(\theta)w_j,w_j\rangle}w_j$, 
  where $\langle,\rangle$ here refers to the $L^2$ norm on
  $[0,\theta_0]$ with measure $\sin^{-(1 + s)}(\theta)d\theta$. (This is finite since $w_j = O(\theta^{2s - n})$ and $c_j = O(\theta^{n - s + 1})$.)
  Set $\eta^{(j + 1)} = \rho^{n + 2j}\log(\rho)^{j + 1}b_{j + 1}$. Then $f - J\eta^{(j + 1)} = \rho^{n + 2j}\sum_{i = 0}^j\log(\rho)^ic_i^{(j + 1)}$, where
  $c_j^{(j + 1)}$ is orthogonal to $w_j$. We can now proceed by induction as in the odd case; except that at each step $l$, we uniquely
  add a multiple of $w_j$ to $b_l$ to ensure that $c_{l - 1}^{(l)}$ is orthogonal to $w_j$. By induction, we may thus solve the equation.
  However, a solution is unique only up to addition of a multiple of $\rho^{s + 2j}w_j$.
\end{proof}

We now return to letting $s > \frac{n}{2}$ be arbitrary.

The following lemma follows easily from Lemma \ref{laplem}, and particularly from the fact that $n - s$ and $s$ are indicial roots of $\Delta_g + s(n - s)$ at $\theta = 0$.

\begin{lemma}
  \label{lapmaplem}
  Suppose that $u \in \mathcal{P}_s(\tX)$. Then $(\Delta_g + s(n - s))u \in \mathcal{P}_s(\tX)$ as well. Moreover, suppose that for any fixed $x \in S$, $u(\theta,x,\rho) \in \mathcal{E}_{s,q}^0$ 
  for some $q$ independent of $x$.  Then for each fixed $x$, there is some $f \in \mathcal{F}_{s,q}$ such that $(\Delta_g + s(n - s))u(\theta,x,\rho) = f(\theta,\rho) + o(\rho^{s + q})$.
\end{lemma}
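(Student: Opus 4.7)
The plan is to prove both claims by direct calculation using the explicit form of $\Delta_g$ given in Lemma \ref{laplem}. The operator splits naturally into a ``leading'' piece $\sin^2(\theta)\partial_\theta^2 + (1-n)\sin(\theta)\cos(\theta)\partial_\theta + \rho^2\sin^2(\theta)\partial_\rho^2 + (2-n)\rho\sin^2(\theta)\partial_\rho + s(n-s)$ (which acts on a monomial $\rho^a(\log\rho)^d\theta^b(\log\theta)^c$ by producing a combination of such monomials with possibly reduced log exponents), and a pair of ``error'' pieces $\rho^2\sin^2(\theta)\Delta_{k_\rho}$ and $\rho\sin(\theta)E^2$ that carry an extra factor of $\rho^2$ or $\rho\sin\theta$, respectively, and hence drop the asymptotic order. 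The essential structural fact is that $n-s$ and $s$ are the indicial roots at $\theta=0$, so that Taylor expanding $\sin^2(\theta)$ and $\sin(\theta)\cos(\theta)$ in the $\theta$-part of the leading piece and applying it to $\theta^{n-s}$ or $\theta^{s}\log(\theta)$ produces only higher-order $\theta$-corrections within $\mathcal{R}_s(\tX)$.

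For claim (1), given $u \in \mathcal{P}_s(\tX)$ with expansion $u \sim a_0 + \rho^{2s-n}\sum_{j\geq 1}\rho^{2(j-1)}\log(\rho)^j a_j$ and each $a_j \in \mathcal{R}_s(\tX)$, I would compute $(\Delta_g + s(n-s))u$ termwise on the generators $\rho^{n-s}\theta^{n-s}C^\infty(\tX)$ and $\rho^{n-s}\theta^{s}\log(\theta)C^\infty(\tX)$ of $\mathcal{R}_s(\tX)$, multiplied by the appropriate power $\rho^{2s-n+2(j-1)}\log(\rho)^j$. The $\theta$-derivative terms and the $s(n-s)$ term preserve the $\mathcal{R}_s(\tX)$-type structure in $\theta$ by the indicial-root property; the pure $\rho$-derivative terms keep the power of $\rho$ fixed but may lower the $\log\rho$ exponent, producing monomials already permitted by the $\mathcal{P}_s$-summation; and the $\rho^2\sin^2(\theta)\Delta_{k_\rho}$ and $\rho\sin(\theta)E^2$ contributions land in strictly higher-order pieces of the expansion (since they multiply by $\rho^2$ or $\rho\sin\theta$ times an element of $C^\infty(\tX)$), still within $\mathcal{P}_s(\tX)$. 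Summing these contributions establishes closure.

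For claim (2), fix $x \in S$ and write $u(\theta,x,\rho) = \rho^{s+q}\sum_{i=0}^{m}\log(\rho)^i b_i(\theta)$ with $m = \lfloor q/2 + 1\rfloor$, each $b_i(\cdot,x) \in \mathcal{A}_{n,s,1}^0$, and (when $q$ is even) $b_m(\cdot,x) \in \ker I_{s,s+q}$ by the definition of $\mathcal{E}_{s,q}^0$. The identity displayed immediately before the definition of $J_{s+q}$ gives
\begin{equation*}
(\Delta_g + s(n-s))\bigl(\rho^{s+q}\log(\rho)^i b_i(\theta)\bigr) = J_{s+q}\bigl(\rho^{s+q}\log(\rho)^i b_i\bigr) + r_i,
\end{equation*}
where $r_i$ comprises the $\rho^2\sin^2(\theta)\Delta_{k_\rho}$ and $\rho\sin(\theta)E^2$ contributions, each of which picks up at least an extra factor of $\rho\sin(\theta)$ against $u$ and so is $o(\rho^{s+q})$. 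Summing over $i$ and invoking Proposition \ref{jprop}, we obtain $(\Delta_g + s(n-s))u(\theta,x,\rho) = f(\theta,\rho) + o(\rho^{s+q})$ with $f := J_{s+q}u(\cdot,x,\cdot) \in \mathcal{F}_{s,q}$.

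The main obstacle is book-keeping: tracking precisely which $\log\theta$ and $\log\rho$ powers are generated at each step and verifying that they fit within the tight bounds defining $\mathcal{P}_s(\tX)$ and $\mathcal{F}_{s,q}$. The indicial-root identities for $\theta^{n-s}$ and $\theta^{s}$ are what prevent extra $\log\theta$ powers from being produced by the $\theta$-derivative portion of the computation, and the kernel condition in the definition of $\mathcal{E}_{s,q}^0$ is precisely what ensures that the top $\log\rho$ coefficient of $J_{s+q}u$ vanishes, so that the maximum log power in $(\Delta_g + s(n-s))u$ does not exceed $\lfloor (q+1)/2\rfloor$, matching the definition of $\mathcal{F}_{s,q}$.
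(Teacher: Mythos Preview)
Your proposal is correct and follows essentially the same approach as the paper, which simply asserts that the lemma ``follows easily from Lemma \ref{laplem}, and particularly from the fact that $n - s$ and $s$ are indicial roots of $\Delta_g + s(n - s)$ at $\theta = 0$.'' Your write-up fills in exactly the computation the paper has in mind: splitting $\Delta_g$ into the leading indicial piece and the $\rho\sin(\theta)E^2$, $\rho^2\sin^2(\theta)\Delta_{k_\rho}$ remainders, using the indicial-root cancellation at $\theta^{n-s}$ and $\theta^s$ to preserve the $\mathcal{R}_s$ structure, and identifying the order-$\rho^{s+q}$ contribution with $J_{s+q}u \in \mathcal{F}_{s,q}$ via the displayed identity preceding the definition of $J_{s+q}$.
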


\begin{proof}[Proof of Theorem \ref{lapthm}]
  We work in the decomposition given by Corollary \ref{polarcor}. In particular, $\tU, \theta$, and $\rho$ are as in that
  corollary. We write $\psi = \psi(x,\rho)$. Throughout the proof, primes will refer to a derivative with respect to $\theta$.

  Define $\tilde{u}_{0} \in \rho^{n - s}\sin^{n - s}(\theta)
  C^{\infty}(\tU)$ by $\tilde{u}_{0}(\theta,x,\rho) = \rho^{n - s}\sin^{n - s}(\theta)\psi(x,\rho)$. Now define $f_{0} \in \mathcal{B}_{n,s,1}\left(\frac{\pi}{2},S  \right)$ by
  $\Delta\tilde{u}_{0} + s(n - s)\tilde{u}_{0} = \rho^{n - s}f_0$. Then by Proposition \ref{indexprop}, we can uniquely solve $I_{s,n-s}\varphi_{0} = -f_{0}$ for $\varphi_{0} \in
  \mathcal{A}_{n,s,1}^0\left( \frac{\pi}{2},S \right)$. Set $u_{0} = \tilde{u}_{s} + \rho^{n - s}\varphi_{0}$. Then plainly,
  $\rho^{s - n}\sin^{s - n}(\theta)u_{0} \equiv \psi$, and $(\Delta_g + s(n-s))u_{0} = O(\rho^{n - s + 1})$. Also, $\partial_{\theta}u_{0}(\frac{\pi}{2}) = 0$.

  Now suppose that either, on the one hand, $2s \notin\mathbb{Z}$ and $k \in \mathbb{N}$ or, on the other hand, that $0 \leq k < 2s - n$; and suppose,
  in either case, that $u_k$ has been smoothly and uniquely defined in $\mathcal{R}_s(\tX)$ 
  so that
  \begin{enumerate}
    \item $\Delta_g u_k = O(\rho^{n - s + k + 1})$;
    \item $\rho^{s - n}\sin^{s - n}(\theta)u_k|_{\theta = 0} = \psi$; and 
    \item $u_k'(\theta_0) = 0$. 
  \end{enumerate}
  We wish to find $\varphi_{k + 1} \in \mathcal{A}_{n,s,1}^0(\frac{\pi}{2},S)$ so that
  $u_{k + 1} = u_k + \rho^{n - s + k + 1}\varphi_{k + 1}$ satisfies each of these conditions with $k$ replaced by $k + 1$.
  Define $f_{k + 1} \in \mathcal{B}_{n,s,1}\left( \frac{\pi}{2},S \right)$ by $f_{k + 1} = \rho^{-(n - s + k + 1)}(\Delta_g + s(n - s))u_k|_{\rho = 0}$. By Lemma \ref{laplem}, $f_{k + 1} = O(\theta^{n - s + 1})$, and indeed,
  since $(\Delta_g(\theta^s) + s(n - s)\theta^s) = O(\theta^{s + 1})$,
  $f_{k + 1} \in \mathcal{B}_{n,1}(\frac{\pi}{2},S)$ as desired.
  Fix $x \in S$. By Proposition \ref{indexprop}, we can uniquely solve $I_{s,n - s + k + 1}\varphi_{k + 1}(\theta,x) = -f_{k + 1}(\theta,x)$ in $\mathcal{A}_{n,s,1}^0$, with
  $\varphi_{k + 1}(0,x) = 0$ and $\varphi_{k + 1}'(\theta_0,x) = 0$. Plainly $\varphi_{k + 1}$ depends smoothly on $x$.
  Thus, $\varphi_{k + 1} \in \mathcal{A}_{n,s,1}^0\left( \frac{\pi}{2},S \right)$ is determined as desired.
  Thus, for any $m \in \mathbb{Z}$ (if $2s \notin \mathbb{Z}$), or for any $m \leq 2s - n$ (otherwise), we can,
  by induction and Proposition \ref{indexprop},
  construct a function $u_{m - 1} \in \mathcal{R}_s(\tX)$, 
  unique through order $n - s + m - 1$, such that $\rho^{s - n}\sin^{s - n}(\theta)u_{m - 1}|_{\tM} = \psi$, such that
  $u_{m - 1}'(\frac{\pi}{2})\equiv 0$, and such that $(\Delta_g + s(n - s)) u_{m - 1} = O(\rho^{n - s + m})$.
  If $2s \notin \mathbb{Z}$, we are done, except for the last paragraph below. From here, we therefore assume that $2s \in \mathbb{Z}$.

  At order $s$, $I_{s,s}$ is not surjective, so 
  our procedure generically fails for $k = 2s - n - 1$ unless, for each $x$, $f_n(\theta,x)$ is orthogonal to $\sin^{s}(\theta)$ with respect to the 
  measure $\sin^{-(n + 1)}(\theta)d\theta$, i.e.,
  unless it is orthogonal to $\csc^{n + 1 - s}(\theta)$. To proceed, we must introduce logarithmic terms, and for this we will use Proposition
  \ref{jprop}. Let $f_{2s - n} = \rho^{-s}(\Delta_g + s(n - s))u_{2s - n - 1}|_{\rho = 0} \in \mathcal{B}_{n,s,1}$ as above, and fix $x$. 
  Then notice that $\rho^sf_{2s - n}(\cdot,x)
  \in \mathcal{F}_{s,0}$. Thus, by Proposition \ref{jprop}
  with $q = 0$, there exists a solution $\Phi_{2s - n} \in \mathcal{E}_{s,0}$ to $J_s\Phi_{2s - n}(\theta) = -\rho^sf_{2s - n}(\theta,x)$,
  which however is determined only up to a term that is a multiple of $\rho^s\sin^s(\theta)$. Set $\varphi_{2s - n} = \rho^{-s}\Phi_{2s - n}$, and
  $u_{2s - n} = u_{2s - n - 1} + \rho^s\varphi_{s}$. Then since this procedure is smooth in $x$, $u_{2s - n}(\theta,x,\rho)$ satisfies $\Delta_g u_{2s - n} + s(n - s)u_{2s - n} = o(\rho^s)$, and the boundary conditions
  $\rho^{s - n}\sin^{s - n}(\theta)u_{2s - n}|_{\tM} = \psi$ and $u_{2s - n}'(\frac{\pi}{2}) = 0$ are satisfied. Notice that $u_{2s - n} \in \mathcal{P}(\tX)$.

  We proceed by induction.
  Suppose that $u_{2s - n + 2j} \in \mathcal{P}(\tX)$ has been successfully defined satisfying $\Delta_g u_{2s - n + 2j} = o(\rho^{s + 2j})$, with
  both boundary conditions as desired, and containing $(j + 1)$st powers of $\log(\rho)$. Then since $\Delta_g + s(n - s)$ is linear, $(\Delta_g + s(n - s))u_{2s - n + 2j}$ will
  likewise contain at most $(j + 1)$st powers of $\log(\rho)$. Fix $x \in S$. Let
  $F_{2s - n + 2j + 1} \in \mathcal{F}_{s,2j + 1}$ be such that $(\Delta_g + s(n - s))u_{2s - n + 2j}(\theta,x,\rho) = F_{n + 2j + 1}(\theta,\rho) + o(\rho^{n + 2j + 1})$.
  Such an $F$ exists by Lemma \ref{lapmaplem}.
  We wish to solve the equation
  $J_{s + 2j + 1}\Phi_{2s - n + 2j + 1} = -F_{2s - n + 2j + 1}$; by Proposition \ref{jprop}, we may do so uniquely, and plainly the solution varies smoothly in $x$.
  Then set $u_{2s - n + 2j + 1} = u_{2s - n + 2j} + \Phi_{2s - n + 2j + 1}$. Clearly, $u_{2s - n + 2j + 1} \in \mathcal{P}(\tX)$ satisfies
  $\Delta_g u_{n + 2j + 1} = o(\rho^{s + 2j + 1})$ and our boundary conditions.

  Next we wish to find $u_{2s - n + 2(j + 1)}$, satisfying our boundary conditions, such that $\Delta_g u_{2s - n + 2(j + 1)} = o(\rho^{s + 2(j + 1)})$.
  But this is exactly the same as the odd case, except that by Proposition \ref{jprop}, the solution will be unique only up to a term of the form
  $\rho^{s + 2(j + 1)}\sin^{n - s}(\theta)w_{j + 1}$, where $w_{j + 1}$ is as in Proposition \ref{piprop}. Hence, by induction, we get an infinite sequence $\{u_{k}\}_{k = 0}^{\infty}$
  such that $\Delta_g u_k + s(n - s)u_k = o(\rho^{n - s + k})$, $u_k|_{\tM} = \psi$, and $\partial_{\theta}u_k|_{\theta = \frac{\pi}{2}} = 0$, 
  and such that each member of the sequence $\left\{ u_k \right\}$ has at most $\left\lfloor\frac{k + 1 + n - 2s}{2}\right\rfloor$ powers of $\log(\rho)$.

  Thus, by Borel's Lemma, as stated in \cite{e56}, there exists a function $u \in \mathcal{P}(\tX)$ such that
  $(\Delta_g + s(n - s)) u = O(\rho^{\infty})$, such that $\partial_{\theta}u|_{\theta = \frac{\pi}{2}} = 0$, and such that
  $\rho^{s - n}\sin^{s - n}(\theta)u|_{\tM} = \psi$. 
\end{proof}

Notice that, in order to uniquely determine a solution for $2s \in \mathbb{Z}$, we would need to specify not only $\psi$, but also
a scalar function $\eta_k \in C^{\infty}(S)$ at order $n + 2k$ for all $k \geq 0$.

We also remark that the power of $\rho$ stated in the boundary condition -- and the lowest appearing in the expansion -- has somewhat more flexibility than the power of $\sin(\theta)$. The expansion can begin
at order $\rho^q\sin^{n - s}(\theta)$ for any $q$ that is not an indicial root of $L_s$. For example, when $2s \notin\mathbb{Z}$, we could simply prescribe that $\sin^{s - n}(\theta)u|_{\theta = 0} = \psi$. In general, the
first $\log(\rho)$ term would be expected to appear at the first indicial root that was integrally separated from the starting power.
As there is a host of different situations that could be studied along these lines and we do not wish to consider them
all here, we leave this problem for now.

\section{Einstein Metrics: Formal Existence}\label{einsec}

In this section, we consider formal existence of CAH Einstein metrics on a cornered
space $(X,M,Q)$. This has famously been studied in the AH setting in \cite{fg12}, where the boundary
data is a conformal class on the conformal infinity $M$. Once again we require a boundary condition at $Q$,
and we continue to follow
\cite{ntu12} in requiring that $Q$ be totally umbilic and of constant mean curvature,
so that its second fundamental form $K_Q$ satisfies $K_Q = \lambda g|_{TQ}$ away
from the corner $S = Q \cap M$.
As we will see, this boundary condition interacts particularly nicely with the geometry of CAH spaces.

We will take as our data a smooth manifold $M^n$
with boundary $S$, equipped with an asymptotically hyperbolic conformal class $[h]$; and a constant $\lambda \in (-1,1)$.
Ideally, we would like to realize $M$ as the infinite boundary of an appropriate cornered space $(X,M,Q)$, 
and then to construct an Einstein CAH metric $g$
satisfying $\ric(g) + ng = 0$ to as high an order as possible at the corner, and
satisfying $K_Q = \lambda g|_{TQ}$ along a finite boundary $Q$. We know by Section \ref{smoothsec}
that this problem cannot be solved if we take $g$ to be smooth. We thus look for solutions
on a blowup space, smooth in polar coordinates, relaxing the requirement that they be smooth on $X$.

Motivated by Proposition \ref{smoothprop} and Theorem \ref{normformthm} and by our need
to break the gauge in the Einstein equations, we take $\tX = [0,\theta_0] \times M$, where
$\theta_0 = \cos^{-1}(-\lambda)$. We look for metrics in the normal form
(\ref{polarform}).
We then take $\tS = [0,\theta_0]_{\theta} \times S$, $\tQ = \left\{ \theta_0 \right\} \times M$, and
$\tM = \left\{ 0 \right\} \times M$. We look for a metric $g$ on
$\tX \setminus (\tM \cup \tS)$, of the form
\begin{equation}
  \label{normformcop}
  g = \csc^2(\theta)\left[ d\theta^2 + h_{\theta} \right],
\end{equation}
where $h_{\theta}$ is a smooth one-parameter family of AH metrics on $M$ with $h_0 \in [h]$, and satisfying
the Einstein equation to as high an order as possible at $\tS$ and the equation
$K_{\tQ} = \lambda g|_{T\tQ}$ along $\tQ$. Our goal is to prove Theorem
\ref{einthm}.

Note that there is no loss of generality in our choice of $\tX$: although a general cornered space $(X,M,Q)$ might have
boundary components $M$ and $Q$ of differing topology, our construction is formal and at $\tS$, where
the topology is determined by $S = \partial M$ alone. 

Throughout this section, it will be convenient to work explicitly with sections of the 0-edge bundle
${}^{0e}T^*\tX$ and its tensor products, as well as the $0$-bundle ${}^0T^*M$. 
We let $\left\{ x^{\mu} \right\}$ be local coordinates near a point $p$ of $M$, so that $(\theta,x^{\mu})$ gives a coordinate
system on $\tX$ near $[0,\theta_0] \times \left\{ p \right\}$.
We define a frame for ${}^{0e}T^*\tX$ given by 
\begin{align*}
  \omega^0 &= \frac{d\theta}{\sin(\theta)}\\
  \omega^{\mu} &= \frac{dx^{\mu}}{\rho\sin(\theta)}
\end{align*}
(where $\rho = x^n$). 

It will be useful to compute the umbilic condition in normal form.

\begin{lemma}
  \label{klem}
  For a metric $g$ on $\tX$ in the form (\ref{normformcop}),  let $K_{\tQ}$ be the second fundamental form of $\tQ \setminus \tS$, and
  $\lambda = -\cos(\theta_0)$.
  Then the condition $K_{\tQ} = \lambda g|_{T\tQ}$ is equivalent to the condition
  \begin{equation*}
    \partial_{\theta}h_{\theta}|_{\theta = \theta_0} = 0.
  \end{equation*}
\end{lemma}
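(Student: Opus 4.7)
The statement is a direct coordinate computation in the normal-form gauge, and I do not expect any genuine obstacle; the main thing to get right is the sign of the inward normal. My plan is to work in local product coordinates $(\theta, x^\mu)$ on $\tX$ coming from coordinates $x^\mu$ on $M$, so that $\tQ \setminus \tS = \{\theta_0\} \times (M \setminus S)$ is a level set of $\theta$ and $\{\partial_{x^\mu}\}$ gives a tangent frame along it.

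From the form (\ref{normformcop}) one reads off $g(\partial_\theta,\partial_\theta) = \csc^2(\theta)$ and $g(\partial_\theta,\partial_{x^\mu}) = 0$, so the $g$-unit inward-pointing normal to $\tQ$ at $\theta = \theta_0$ (i.e.\ pointing in the direction of decreasing $\theta$) is $\nu = -\sin(\theta_0)\,\partial_\theta$.

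Next I would compute $K_{\tQ}(\partial_\mu,\partial_\nu) = g(\nabla_{\partial_\mu}\partial_\nu,\nu)$, which reduces to evaluating $\Gamma^\theta_{\mu\nu} \, g(\partial_\theta,\nu)$. The relevant Christoffel symbol comes from the standard warped-product formula
\begin{equation*}
  \Gamma^\theta_{\mu\nu} = -\tfrac{1}{2} g^{\theta\theta}\, \partial_\theta g_{\mu\nu} = \cot(\theta)\,(h_\theta)_{\mu\nu} - \tfrac{1}{2}\,\partial_\theta (h_\theta)_{\mu\nu},
\end{equation*}
where the first term comes from differentiating the conformal factor $\csc^2(\theta)$ and the second from differentiating $h_\theta$.

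Substituting $g(\partial_\theta,\nu) = -\csc(\theta_0)$ and $g|_{T\tQ} = \csc^2(\theta_0)\,h_{\theta_0}$, and using $\lambda = -\cos(\theta_0)$, the $h_{\theta_0}$-proportional contribution to $K_{\tQ}$ cancels precisely against $\lambda g|_{T\tQ}$, leaving
\begin{equation*}
  K_{\tQ} - \lambda\, g|_{T\tQ} \;=\; \tfrac{1}{2}\csc(\theta_0)\,\partial_\theta h_\theta\big|_{\theta=\theta_0}.
\end{equation*}
Since $\csc(\theta_0) \neq 0$ for $\theta_0 \in (0,\pi)$, the equivalence $K_{\tQ} = \lambda g|_{T\tQ} \iff \partial_\theta h_\theta|_{\theta=\theta_0} = 0$ follows at once, completing the proof.
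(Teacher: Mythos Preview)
Your proof is correct and follows essentially the same approach as the paper: both identify the inward normal $\nu = -\sin(\theta_0)\partial_\theta$ and compute the second fundamental form via the Christoffel symbols of $g$, splitting $\partial_\theta g_{\mu\nu}$ into the $\csc^2(\theta)$-derivative and the $h_\theta$-derivative. The only cosmetic difference is that the paper works with the lowered symbol $\Gamma_{\mu 0\nu}$ while you use the raised $\Gamma^\theta_{\mu\nu}$.
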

\begin{proof}
  Plainly, the inward-pointing normal to $\tQ$
  is given by $\nu = -\sin(\theta_0)\frac{\partial}{\partial\theta}$. By Weingarten's equation,
  \begin{align*}
    K_{\mu\nu} &= -g_{k\nu}\nabla_{\mu}\nu^k\\
    &= \sin(\theta_0)\Gamma_{\mu 0\nu},
  \end{align*}
  where $\Gamma_{\mu 0\nu} = \frac{1}{2}(\partial_{\mu}g_{0\nu} + \partial_{\theta}g_{\mu\nu} - \partial_{\nu}g_{0\nu}) =
  \frac{1}{2}\partial_{\theta}g_{\mu\nu}$. Since $g_{\mu\nu} = \csc^2(\theta)h_{\mu\nu}$ and $\partial_{\theta}(\csc^2(\theta)) = -2\csc^2(\theta)\cot(\theta)$,
  the result follows.
\end{proof}

As discussed in the introduction, to prove Theorem \ref{einthm} we will choose a conformal representative $h \in [h]$; we will take it to be
in AH normal form. Then $h_0$ in (\ref{normformcop}), as discussed, will be of the form $\chi h$, where $\chi$ is a function
to be determined. This motivates the following proposition, which we will use to prove the theorem. Recall that $\mathcal{M}(\theta_0,M)$ is defined
in Section \ref{funcsubsec}.

\begin{proposition}
  \label{auxprop}
  Let $n \geq 2$, and suppose $S$ is a smooth manifold of dimension $n - 1$.
  Given a one-parameter family of metrics $k_{\rho}$ on $S$, there exists a one-parameter family of smooth AH metrics
  $\{h_{\theta}:0 \leq \theta \leq \theta_0\}$ on $S \times [0,\varepsilon)_{\rho}$ and a function $\chi \in C^{\infty}(S \times [0,\varepsilon))$,
  unique mod $O(\rho^n)$, such that $h_{\theta} \in 
  \mathcal{M}(\theta_0,S \times [0,\varepsilon))$, and letting $\brh_{\theta} = \rho^2h_{\theta}$, we have
  \begin{enumerate}
    \item\label{firstcond} $\chi|_{\rho = 0} = 1$;
    \item\label{umbcond} $\partial_{\theta}\bar{h}_{\theta}|_{\theta = \theta_0} = 0$ for all $\rho$;
    \item $\partial_{\theta}\bar{h}_{\theta}|_{\rho = 0} = 0$ for all $\theta$;
    \item\label{boundcond} $\bar{h}_0 = \chi(d\rho^2 + k_{\rho})$; and
    \item\label{lastcond} if $g$ is the metric on $[0,\theta_0] \times S \times [0,\varepsilon)$ given by
      \begin{equation}
        \label{polarformcop}
        g = \csc^2(\theta)[d\theta^2 + h_{\theta}],
      \end{equation}
      then $\ric(g) + ng = O_g(\rho^n)$.
  \end{enumerate}
  Moreover, $\bar{h}_{\theta}$ is even in $\theta$ to order $n$.

  Finally, if the ordinary differential operator $I_{n,k}$ given by (\ref{iform}) has trivial kernel for all $k \in \mathbb{N}$, then
  in fact $h_{\theta}$ may be chosen so that $\ric(g) + ng = O_g(\rho^{\infty})$.
\end{proposition}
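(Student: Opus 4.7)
My plan is to construct $h_\theta$ and $\chi$ by induction on the order of vanishing of $E(g) := \ric(g)+ng$ at $\tS$, reducing at each step to a boundary-value problem on $[0,\theta_0]_\theta$ for a system of second-order regular singular ODEs. For a perturbation $g \mapsto g+\rho^\nu\varphi$ with $\varphi \in \Gamma(S^2({}^{0e}T^*\tX))$, set $I^\nu(\varphi) = \rho^{-\nu}(E(g+\rho^\nu\varphi)-E(g))|_{\rho=0}$. I would decompose $\varphi$ into its seven irreducible blocks with respect to the splitting $TX|_{\tS\cap\tM} = \mathbb{R}\partial_\theta \oplus \mathbb{R}\partial_\rho \oplus TS$, namely $\varphi_{00},\varphi_{nn},\varphi_{0n},\varphi_{0s},\varphi_{ns}$, the $k$-trace of $\varphi_{st}$, and the $k$-trace-free part of $\varphi_{st}$. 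A direct computation shows that $I^\nu$ is block-diagonal in this splitting, and each diagonal block is a regular singular second-order ODE in $\theta$ of the same general shape as the scalar indicial operator $I_{n,\nu}$ of Lemma \ref{indexlem}, with shifts determined by the conformal weight of the block; the trace-free tangential block is exactly $I_{n,\nu}$.

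The base case sets $\bar h_\theta|_{\rho=0} = \chi_0 k_0$ with $\chi_0 \equiv 1$, in which the umbilic condition \eqref{umbcond} and the normal-form condition $\partial_\theta\bar h|_{\rho=0}=0$ are automatic. For the inductive step at $\nu \leq n-1$, with $f^\nu$ the $\rho^\nu$-coefficient of $E(g)$, I would solve $I^\nu(\varphi^\nu) = -f^\nu$ block by block via Proposition \ref{indexprop}: on the four blocks whose scalar indicial operator is bijective at order $\nu$, the resulting ODE has a unique solution subject to the Robin condition at $\theta_0$ coming from Lemma \ref{klem} and the Dirichlet-type condition at $\theta = 0$ coming from \eqref{boundcond}. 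The remaining three equations are overdetermined. Here I would invoke the contracted Bianchi identity $\delta_g E(g) \equiv 0$: its $\rho^\nu$-coefficient, expanded in our decomposition, is a linear combination of the seven blocks of the $\rho^\nu$-coefficient of $E(g)$. I would show that once the four chosen blocks have been made to vanish, this Bianchi relation forces the remaining three to vanish, but only if the $\rho^\nu$-coefficient $\chi^\nu$ of $\chi$ is chosen to satisfy a single algebraic constraint. This pins down $\chi^\nu$ uniquely and produces $\varphi^\nu$ satisfying all seven equations. Evenness of $\bar h_\theta$ in $\theta$ through order $n$ then propagates: at each order the source $f^\nu$ inherits even parity, and the Green's operator \eqref{greensop} preserves parity through the stated properties of $p$ and $q$.

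At order $\nu = n$ the construction changes character: the Bianchi-based scalar identity for $\chi$ becomes degenerate, so $\chi^n$ is free, and a different block of $I^n$ is used to solve for $\varphi^n$. Uniqueness of $\bar h_\theta$ therefore holds only mod $O(\rho^n)$, giving the first assertion. Under the hypothesis that $I_{n,k}$ has trivial kernel for all $k \geq n$, Proposition \ref{indexprop} continues to invert every diagonal block at every subsequent order; where non-tangential blocks have integral indicial roots, logarithmic powers of $\rho$ are introduced as in Proposition \ref{jprop} and the proof of Theorem \ref{lapthm}. Borel summation then yields a formal $h_\theta \in \mathcal{M}(\theta_0,M)$ with $E(g) = O_g(\rho^\infty)$.

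The main obstacle is the Bianchi step in the inductive argument: one must verify that the $\rho^\nu$-coefficient of $\delta_g E(g)$, expressed in the seven-block decomposition, really does constrain precisely the three ``overdetermined'' equations once the other four are satisfied, and that the resulting scalar constraint on $\chi^\nu$ is nondegenerate for $\nu < n$ but degenerates exactly at $\nu = n$. This requires careful bookkeeping of indicial roots and of the shifts between blocks, along the lines of \cite{gl91} but adapted to the 0-edge setting and the extra $\theta$-variable; essentially all of the genuinely new analytic work in the proof is concentrated here.
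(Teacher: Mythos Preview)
Your overall strategy matches the paper's: order-by-order construction via the indicial operator, four equations used to determine $\varphi$, and the contracted Bianchi identity both to dispose of the remaining three equations and to pin down the $\rho^\nu$-coefficient of $\chi$. Your identification of the Bianchi step as the crux is also right.

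However, there is a genuine gap in your decomposition of $\varphi$. Because you are working in the normal-form gauge $g = \csc^2(\theta)[d\theta^2 + h_\theta]$, the perturbation $\varphi$ lives in $\mathcal{T} = \{\eta \in S^2({}^{0e}T^*\tX): \sin(\theta)\partial_\theta \into \eta = 0\}$; in particular $\varphi_{00} = \varphi_{0n} = \varphi_{0s} = 0$ identically. So $\varphi$ has only \emph{four} irreducible blocks ($\varphi_{nn}$, $\varphi_{ns}$, $\bar h^{\mu\nu}\varphi_{\mu\nu}$, $\mathring\varphi_{st}$), not seven. The seven blocks belong to the \emph{target} $E(g)$, and this mismatch is exactly why the system is overdetermined. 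Consequently your claim that ``$I^\nu$ is block-diagonal'' cannot even be formulated: $I^\gamma$ maps a four-block space to a seven-block space. Worse, the blocks are coupled (e.g.\ $I^\gamma_{nn}$ in \eqref{Inn} depends on both $\varphi_{nn}$ and the trace) and are not all ``of the same general shape as $I_{n,\nu}$'': by \eqref{I00} the operator $I^\gamma_{00}$ has no zeroth-order term, and by \eqref{I0sigma} the operator $I^\gamma_{0\sigma}$ is \emph{first}-order in $\theta$. The paper's proof exploits precisely this asymmetry, solving $\mathring I_{st}$, then $I_{00}$ for the trace, then $I_{0n}$ for $\varphi_{nn}$, then $I_{0s}$ for $\varphi_{ns}$, in that order; a generic ``invert each diagonal block by Proposition~\ref{indexprop}'' does not work.

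A second, smaller error: in the infinite-order extension you introduce $\log\rho$ via Proposition~\ref{jprop}. But the hypothesis here is that $I_{n,k}$ has trivial kernel for \emph{all} $k \in \mathbb{N}$, so no indicial root is ever hit and the construction stays within smooth functions of $\rho$ (the $\log\theta$ terms for even $n\ge 4$ are a separate matter, built into $\mathcal{M}(\theta_0,M)$). The order-$n$ step does require a different choice of which four of the seven equations to solve (since $I_{0\sigma}$ degenerates at $\gamma = n$), and $\psi^{(n)}$ remains free there, but no $\log\rho$ enters.
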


We next prove that, assuming Proposition \ref{auxprop} is true, Theorem \ref{einthm} follows.

\begingroup
\allowdisplaybreaks

\begin{proof}[Proof of Theorem \ref{einthm} using Proposition \ref{auxprop}]
  Let $h \in [h]$, and let $\psi:S \times [0,\varepsilon) \to W \subseteq M$ be a diffeomorphism with a neighborhood $W$ of $S$ in $M$ such that
  $\psi^*h = \frac{d\rho^2 + k_{\rho}}{\rho^2}$, with $k_{\rho}$ a one-parameter family of metrics on $S$. Then $\psi$ induces a diffeomorphism
  $id \times \psi: [0,\theta_0] \times S \times [0,\varepsilon) \to \tX = [0,\theta_0] \times M$.

  By Proposition \ref{auxprop}, there exists a one-parameter family $\tilde{h}_{\theta} \in \mathcal{M}(\theta_0,S \times [0,\varepsilon))$ of AH metrics on
  $S \times [0,\varepsilon)$, and a smooth function $\chi \in C^{\infty}(S \times [0,\varepsilon))$
  satisfying conditions \ref{firstcond} - \ref{boundcond}, and such that 
  $\tilde{g} = \csc^2(\theta)(d\theta^2 + \tilde{h}_{\theta})$ is Einstein mod $O_g(\rho^n)$; moreover, both $\overline{\tilde{h}}_{\theta}$ and
  $\chi$ are uniquely defined mod $O(\rho^n)$.

  Let $h_{\theta} = (\psi^{-1})^*\tilde{h}_{\theta} \in \mathcal{M}(\theta_0,W)$, and $g = ((id \times \psi)^{-1})^*\tilde{g}$; then it is clear that
  $g = \csc^2(\theta)(d\theta^2 + h_{\theta})$, that $h_0 = ((\psi^{-1})^*\chi)h \in [h]$, and that
  $\ric(g) + ng = O_g(\rho^n)$. Moreover, by condition \ref{umbcond} in Proposition \ref{auxprop}, we conclude that
  $\partial_{\theta}h_{\theta}|_{\theta = \theta_0} = 0$. By Lemma \ref{klem}, and because $\cos(\theta_0) = -\lambda$, it follows
  that along $\tQ \setminus \tS$, we have $K_{\tQ} = \lambda g|_{\tQ}$. Thus, existence is established.

  For uniqueness, suppose now that we have another one-parameter family $h_{\theta}'$ of AH metrics on $\tM$ such that
  $g' = \csc^2(\theta)[d\theta^2 + h_{\theta}']$ is also Einstein mod $O_g(\rho^n)$, $h_0' \in [h]$, and
  $K_{\tQ}' = \lambda g'|_{\tQ}$. Suppose also that $\partial_{\theta}(\rho^2h_{\theta})|_{\tS} = 0$.
  Now let $\tilde{h}_{\theta}' = \psi^*h_{\theta}'$, and $\tilde{g}' = (\id \times \psi)^*g'$. Notice that if we write
  $h_0' = \Omega^2 h_0$, then $\Omega|_{S} = 1$. Then it is easy to see
  that $\tilde{g}'$ and $\tilde{h}_{\theta}'$ satisfy conditions \ref{firstcond} - \ref{lastcond} of Proposition \ref{auxprop}.
  Thus, $\tilde{g} - \tilde{g}' = O_{\tilde{g}}(\rho^n)$. Pushing forward again, we may conclude that $g - g' = O_g(\rho^n)$.

  Finally, since by hypothesis there are no integral solutions to (\ref{hyperchareq}) with $s = n$, it follows by Proposition \ref{charlem} that
  $I_{n,\nu}$ has nontrivial kernel for all integral $\nu$, and the last claim follows.
\end{proof}

We now begin working toward a proof of Proposition \ref{auxprop}. Since we will have no further cause to refer to the setting
of Theorem \ref{einthm}, for the remainder of this section we will for convenience let $\tX = [0,\theta_0] \times S \times [0,\varepsilon)$,
let $\tM = \left\{ 0 \right\} \times S \times [0,\varepsilon)$, let $M = S \times [0,\varepsilon)$, let $\tQ = \left\{ \theta_0 \right\} \times S \times [0,\varepsilon)$, and
let $\tS = [0,\theta_0] \times S \times \left\{ 0 \right\}$.

We begin by computing the following.
\begin{lemma}
  \label{eineqslem}
  Let $S$ be a manifold of dimension $n - 1$, let $\theta_0 \in (0,\pi)$, and let $g$ be a metric in the normal form (\ref{polarformcop})
  on $\tX = [0,\theta_0] \times S \times [0,\varepsilon)_{\rho}$.
  Set $E = \ric(g) + ng$. Then $E = \hat{E}_{ij}dx^idx^j = E_{ij}\omega^i\omega^j$, where
  \begin{align}
    E_{00} = &-\frac{1}{2}\sin^2(\theta)\brh^{\mu\nu}\partial_{\theta}^2\brh_{\mu\nu} + \frac{1}{2}\sin(\theta)
    \cos(\theta)\brh^{\mu\nu}\partial_{\theta}\brh_{\mu\nu}+\frac{1}{4}
    \sin^2(\theta)\left|\partial_{\theta}\brh\right|_{\brh}^2\label{e00}\\
    E_{0\sigma} = &\frac{1}{2}\sin^2(\theta)
    \left[ 
    \brh^{\mu\nu}\partial_{\theta}(\brh_{\mu\nu})\rho_{\sigma} - n\rho^{\mu}\partial_{\theta}\brh_{\sigma\mu}
    + \rho\left(\nabla^{\mu}(\partial_{\theta}\brh_{\sigma\mu}) - \nabla_{\sigma}\left(\left( \partial_{\theta}\brh \right)_{\mu}^{\mu}\right)
    \right)\right]\label{e0sigma}\\
    E_{\mu\nu} = &-\frac{1}{2}\sin^2(\theta)\partial_{\theta}^2\brh_{\mu\nu} + \frac{n - 1}{2}\sin(\theta)\cos(\theta)\partial_{\theta}\brh_{\mu\nu}\label{emunu}\\
    &\quad+\frac{1}{2}\sin^2(\theta)\brh^{\eta\lambda}\partial_{\theta}(\brh_{\mu\eta})\partial_{\theta}(\brh_{\nu\lambda})
    -\frac{1}{4}\sin^2(\theta)\brh^{\eta\lambda}\partial_{\theta}(\brh_{\eta\lambda})\partial_{\theta}(\brh_{\mu\nu})\nonumber\\
    &\quad + \frac{1}{2}\sin(\theta)\cos(\theta)
    \brh^{\eta\lambda}\partial_{\theta}(\brh_{\eta\lambda})\brh_{\mu\nu}
    + (1 - n)\sin^2(\theta)\left( |d\rho|_{\brh}^{2} - 1 \right)\brh_{\mu\nu}\nonumber\\
    &\quad+ (n - 2)\rho\sin^2(\theta)\nabla_{\mu}\rho_{\nu} + \rho\sin^2(\theta)\nabla^{\eta}\rho_{\eta}\brh_{\mu\nu}
    + \rho^2\sin^2(\theta)\ric(\brh)_{\mu\nu}.\nonumber
  \end{align}
  Here indices are raised and covariant derivatives taken with respect to $\brh = \rho^2h$, and $\brh = \brh_{\mu\nu}dx^{\mu}dx^{\nu}$.
\end{lemma}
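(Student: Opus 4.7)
The plan is to compute $E = \ric(g) + ng$ in three stages, mirroring the structure of the formulas: first obtain $\ric(\tilde g)$ where $\tilde g = d\theta^2 + h_\theta$ is a Riemannian submersion metric; then apply the conformal change formula to pass from $\tilde g$ to $g = \csc^2(\theta)\tilde g$; and finally rewrite everything in terms of the compactified family $\bar h_\theta = \rho^2 h_\theta$ and convert to the frame $\{\omega^i\}$.

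For the first stage, the only Christoffel symbols of $\tilde g$ beyond those intrinsic to $h_\theta$ are $\tilde\Gamma^\theta_{\mu\nu} = -\tfrac{1}{2}\partial_\theta h_{\mu\nu}$ and $\tilde\Gamma^\mu_{\theta\nu} = \tfrac{1}{2}h^{\mu\alpha}\partial_\theta h_{\alpha\nu}$, which encode the scalar second fundamental form of each slice $\{\theta\}\times M$. Substituting these into the standard formula for the Ricci tensor produces expressions for $\ric(\tilde g)_{\theta\theta}$, $\ric(\tilde g)_{\theta\mu}$, and $\ric(\tilde g)_{\mu\nu}$ in terms of $\partial_\theta h$, $\partial_\theta^2 h$, and $\ric(h_\theta)$. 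This is a completely standard warped-product-style computation.

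For the second stage, write $g = e^{2\sigma}\tilde g$ with $\sigma = -\log\sin\theta$ and apply the conformal transformation formula in dimension $n+1$:
\[
\ric(g) = \ric(\tilde g) - (n-1)\bigl(\operatorname{Hess}_{\tilde g}\sigma - d\sigma\otimes d\sigma\bigr) - \bigl(\Delta_{\tilde g}\sigma + (n-1)|d\sigma|^2_{\tilde g}\bigr)\tilde g.
\]
Since $d\sigma = -\cot(\theta)\,d\theta$, one has $(\operatorname{Hess}_{\tilde g}\sigma)_{\theta\theta} = \csc^2\theta$, $(\operatorname{Hess}_{\tilde g}\sigma)_{\theta\mu} = 0$, $(\operatorname{Hess}_{\tilde g}\sigma)_{\mu\nu} = -\tfrac{1}{2}\cot(\theta)\partial_\theta h_{\mu\nu}$, $|d\sigma|^2_{\tilde g} = \cot^2\theta$, and $\Delta_{\tilde g}\sigma = \csc^2\theta - \tfrac{1}{2}\cot(\theta) h^{\mu\nu}\partial_\theta h_{\mu\nu}$. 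Adding $ng = n\csc^2(\theta)(d\theta^2 + h_\theta)$ then yields the coordinate components $\hat E_{ij}$; the trigonometric prefactors $\sin(\theta)\cos(\theta)$ and $\sin^2(\theta)$ appearing in the lemma emerge from combining $\csc^2\theta$ with the $\sigma$-derivatives, and crucially the $\csc^2\theta$ terms from $ng$, from $\operatorname{Hess}_{\tilde g}\sigma$, and from $\Delta_{\tilde g}\sigma$ cancel up to the expected $h_\theta$-derivative terms.

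For the final stage, substitute $h_{\mu\nu} = \rho^{-2}\bar h_{\mu\nu}$ and use the standard formulas (cf.\ \cite{gl91}) relating $\ric(h_\theta)$ and $\nabla^h$ to $\ric(\bar h_\theta)$ and $\bar\nabla$ under the conformal rescaling $\bar h = \rho^2 h$. These rescaling identities produce precisely the $(n-2)\rho\nabla_\mu\rho_\nu$, $\rho(\nabla^\eta\rho_\eta)\bar h_{\mu\nu}$, $\rho^2\ric(\bar h)_{\mu\nu}$, and $(1-n)(|d\rho|^2_{\bar h} - 1)\bar h_{\mu\nu}$ terms of \eqref{emunu}; the unusual-looking factor $(|d\rho|^2_{\bar h}-1)$ is the remnant of the exact AH identity $|d\rho|^2_h = 1$ that would hold if $\rho$ were a geodesic defining function for $h_\theta$. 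Passing to frame components is then immediate: since $\omega^0 = d\theta/\sin\theta$ and $\omega^\mu = dx^\mu/(\rho\sin\theta)$, the relations $E_{00} = \sin^2(\theta)\hat E_{\theta\theta}$, $E_{0\mu} = \rho\sin^2(\theta)\hat E_{\theta\mu}$, $E_{\mu\nu} = \rho^2\sin^2(\theta)\hat E_{\mu\nu}$ convert the formulas.

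The main obstacle is bookkeeping rather than any single conceptual hurdle: four metrics ($g$, $\tilde g$, $h_\theta$, $\bar h_\theta$) and their connections must be juggled, and it takes some care to see that the numerous cross terms combine into the relatively compact expressions stated. I expect the delicate spot to be the $E_{0\sigma}$ identity, where one must correctly match the commutator between the $\theta$-derivative and the $\bar h$-covariant derivative with the second Bianchi-type contribution from $\ric(\tilde g)_{\theta\mu}$; everything else is essentially a direct substitution, and the resulting structure exactly parallels the Fefferman-Graham computation of the Einstein tensor of an AH metric in normal form.
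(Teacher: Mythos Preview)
Your approach is correct but takes a genuinely different route from the paper. The paper computes the Christoffel symbols of $g$ directly in coordinates, handling the $\csc^2(\theta)$ factor and the $\rho^{-2}$ in $h_\theta$ simultaneously (these are displayed explicitly as equation~(\ref{christoff})), and then substitutes into the raw coordinate formula
\[
R_{ij} = \tfrac{1}{2}g^{kl}\bigl(\partial^2_{il}g_{jk} + \partial^2_{jk}g_{il} - \partial^2_{kl}g_{ij} - \partial^2_{ij}g_{kl}\bigr) + g^{kl}g^{pq}\bigl(\Gamma_{ilp}\Gamma_{jkq} - \Gamma_{ijp}\Gamma_{klq}\bigr),
\]
with no conformal-change machinery at all. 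Your layered decomposition --- product metric $\tilde g$, then conformal in $\theta$, then conformal in $\rho$ --- is more structured and makes transparent why the individual terms take the shape they do, in particular the parallel with the Fefferman--Graham computation that you note at the end. The paper's brute-force route has the compensating advantage that the Christoffel symbols of $g$ are obtained as a byproduct and are reused verbatim in the Bianchi-identity analysis later in the proof of Proposition~\ref{auxprop} (equations~(\ref{B0})--(\ref{Bsigma})); if you follow your route, you would need to recover those separately when they are needed there.
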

\begin{proof}
  Using the form (\ref{polarformcop}) of the metric, we compute the Christoffel symbols as follows in coordinates:
  \begin{equation}
    \label{christoff}
    \begin{array}{ll}
      \Gamma_{000} = -\csc^2(\theta)\cot(\theta) & \Gamma_{\mu\nu 0}=\rho^{-2}\csc^2(\theta)\cot(\theta)\brh_{\mu\nu} - 
      \frac{1}{2}\rho^{-2}\csc^2(\theta)\partial_{\theta}\brh_{\mu\nu}\\
      \Gamma_{0\mu0} = 0 & 
      \begin{split}\Gamma_{0\mu\sigma} =& -\rho^{-2}\csc^2(\theta)\cot(\theta)\brh_{\mu\sigma}\\&+ \frac{1}{2}\rho^{-2}\csc^2(\theta)\partial_{\theta}
      \brh_{\mu\sigma}
      \end{split}\\
      \Gamma_{00\sigma} = 0 &
      \begin{split}\Gamma_{\mu\nu\sigma} =& -2\rho^{-3}\csc^2(\theta)\brh_{\sigma(\mu}\rho_{\nu)} + \rho^{-3}\csc^2(\theta)\brh_{\mu\nu}\rho_{\sigma}
      \\&+ \rho^{-2}\csc^2(\theta)\overline{\Gamma}_{\mu\nu\sigma},
    \end{split}
    \end{array}
  \end{equation}
  where $\overline{\Gamma}$ is the Christoffel symbol of $\brh$. The result now follows from a tedious but straightforward computation using
  the equation
  \begin{equation}
    \label{ricci}
    R_{ij} = \frac{1}{2}g^{kl}\left( \partial_{il}^2g_{jk} + \partial_{jk}^2g_{il} - \partial_{kl}^{2}g_{ij} - \partial_{ij}^2g_{kl} \right)
    + g^{kl}g^{pq}\left( \Gamma_{ilp}\Gamma_{jkq} - \Gamma_{ijp}\Gamma_{klq} \right).
  \end{equation}
\end{proof}

We state the following, which will be of use later.
\begin{lemma}
  \label{einmaplem}
  If $h_{\theta} \in \mathcal{H}_{n,l}(\theta_0,M,S^2({}^{0}T^*M))$ for some $l \geq 0$ and $\partial_{\theta}\brh_{\theta}|_{\rho = 0} = 0$, then
  for each $j \geq 0$,
  $\partial_{\rho}^jE(g)|_{\rho = 0} \in \oplus_{i = 0}^{m_j}(\theta^n\log(\theta))^iC^{\infty}(\tS,S^2({}^{0e}T^*\tX))$ for some finite $m_j \geq 0$.
  Moreover, if $\brh_{\theta}$ is even in $\theta$ through order $k \geq 2$, then
  $E(g)$ is even through the same order as a section of $S^2({}^{0e}T^*\tX)$.
\end{lemma}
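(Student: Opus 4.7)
The plan has two parts corresponding to the two claims of the lemma, and in each case I would work directly from the explicit component formulas for $E(g)$ given by Lemma \ref{eineqslem}.

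For the regularity claim, the first step is to verify that the function spaces $\mathcal{H}_{n,l}$ and their analogues for sections of other tensor bundles on $M$ are closed, with a predictable increase in the log index, under the operations appearing in (\ref{e00})--(\ref{emunu}): matrix inversion $\brh_{\mu\nu} \mapsto \brh^{\mu\nu}$, which is legitimate because $\brh_\theta$ is a smooth nondegenerate metric on $M$ for each $\theta$; tensor products; the $\theta$-derivative, which on the model term $\theta^n\log\theta$ produces $\theta^{n-1}(n\log\theta + 1)$ and so preserves the form of the space after accounting for the $\sin^2\theta$ and $\sin\theta\cos\theta$ prefactors appearing throughout the formulas; and $M$-covariant differentiation and $\mathrm{Ric}(\brh)$, which involve no $\theta$-differentiation and so act on the $\theta$-coefficients pointwise. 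Granting this closure, $E(g)$ itself lies in $\bigoplus_{i = 0}^{m}(\theta^n\log\theta)^i C^\infty(\tX,\,S^2({}^{0e}T^*\tX))$ for some finite $m$ depending on $l$, and since $\partial_\rho^j$ preserves this $\theta$-structure, restricting to $\rho = 0$ yields the claim. The hypothesis $\partial_\theta \brh_\theta|_{\rho = 0} = 0$ enters to ensure that the restrictions of the $\partial_\theta\brh$ terms to $\tS$ behave well in the 0-edge frame.

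For the evenness claim, the approach is a direct term-by-term parity check on (\ref{e00})--(\ref{emunu}) using the bookkeeping: $\sin\theta$ odd and $\cos\theta$ even; spatial quantities such as $\rho$, $\rho_\mu$, $\nabla_\mu\rho_\nu$, and $\mathrm{Ric}(\brh)$ inherit their $\theta$-parity from $\brh$; by hypothesis $\brh$ and $\brh^{\mu\nu}$ are even through order $k$, whence $\partial_\theta\brh$ is odd through order $k - 1$ and $\partial_\theta^2\brh$ is even through order $k - 2$. The key observation that restores the full order $k$ in the conclusion is that every summand containing $\partial_\theta\brh$ or $\partial_\theta^2\brh$ is multiplied by $\sin^2\theta$ or $\sin\theta\cos\theta$, a prefactor vanishing to order at least one at $\theta = 0$; the standard parity-through-order arithmetic (e.g., purely odd times odd-through-$(k-1)$ gives even through order $k$, and purely even $O(\theta^2)$ times even-through-$(k-2)$ gives even through order $k$) then shows each summand has the correct parity through order $k$. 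Since under the pullback by $\theta \mapsto -\theta$ one has $\omega^0 \mapsto \omega^0$ and $\omega^\mu \mapsto -\omega^\mu$, evenness of $E(g)$ through order $k$ as a section of $S^2({}^{0e}T^*\tX)$ amounts to $E_{00}$ and $E_{\mu\nu}$ being even and $E_{0\sigma}$ being odd in $\theta$ through order $k$, which is precisely what the term-by-term check produces.

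The main obstacle I expect is the bookkeeping in the first part: the $\theta$-derivative of a factor $(\theta^n\log\theta)^i$ interlaces a new $\log\theta$ with the existing ones and produces a lower power of $\theta$, while matrix inversion introduces cross-terms between the smooth part of $\brh_{\mu\nu}$ and its $\log\theta$-containing correction, so showing that the final index $m_j$ remains finite requires a careful but routine induction on the structure of the $(\theta^n\log\theta)^i$-expansion. The parity count in the second part, by contrast, reduces to a clean algebraic verification once the prefactor/derivative trade-off is noticed.
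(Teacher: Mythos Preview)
Your evenness argument is correct and matches the paper's approach; the paper phrases it more tersely as ``the number of factors of $\sin(\theta)$ is the same as the number of derivatives with respect to $\theta$ in each term,'' which is exactly your prefactor/derivative trade-off.

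The first part, however, has a genuine gap. You assert that $E(g)$ lies globally in $\bigoplus_{i=0}^{m}(\theta^n\log\theta)^i C^\infty(\tX)$ for a single finite $m = m(l)$, but this is false: matrix inversion does \emph{not} preserve a fixed $\mathcal{H}_{n,l}$ with any predictable bound on the log index. Writing $\brh = A + B$ with $A$ the smooth part and $B$ the part carrying $(\theta^n\log\theta)$-factors, the Neumann series $\brh^{-1} = A^{-1}\sum_{k\ge 0}(-BA^{-1})^k$ has $k$th term of log degree up to $kl$, so $\brh^{-1}$ has no global finite log bound. What rescues the lemma is precisely the hypothesis $\partial_\theta\brh_\theta|_{\rho=0}=0$, whose role you misidentify as being about the 0-edge frame. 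This hypothesis forces $\brh_\theta|_{\rho=0}$ to be $\theta$-independent, hence smooth in $\theta$, so every $(\theta^n\log\theta)^i$-coefficient of $\brh$ with $i\ge 1$ vanishes at $\rho=0$; that is, $B = O(\rho)$. Then the $k$th Neumann term is $O(\rho^k)$, and at each fixed order $\rho^j$ only finitely many terms contribute, yielding a finite $m_j$. This is exactly the content of the paper's sentence ``the powers of $\theta^n\log(\theta)$ in $\brh^{-1}$ are bounded at each finite order in $\rho$ due to the hypothesis that $\partial_\theta\brh_\theta|_{\rho=0}=0$.'' Your proposed ``routine induction on the $(\theta^n\log\theta)^i$-expansion'' cannot close without invoking this $\rho$-grading mechanism.

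(A minor point: $\partial_\theta(\theta^n\log\theta)^i$ has log degree at most $i$, not $i+1$, so the $\theta$-derivative is not where the difficulty lies.)
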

\begin{proof}
  We sketch the proof for $E_{00}$ in (\ref{e00}). The evenness claim is clear, since the number of factors of $\sin(\theta)$ is the same as the number of derivatives with respect
  to $\theta$ in each term. For the first claim, inspect each term and notice that at each finite power of $\rho$, there is a bounded power of $\theta^n\log(\theta)$ by the hypothesis
  on $h_{\theta}$. The powers of $\theta^n\log(\theta)$ in $\brh^{-1}$ are bounded at each finite order in $\rho$ due to the hypothesis that
  $\partial_{\theta}\brh_{\theta}|_{\rho = 0} = 0$. The proof for the other components of $E$ is similar.
\end{proof}

Our approach to proving Proposition \ref{auxprop} will be to construct the metric term by term in powers of $\rho$ 
by solving the indicial equation, just as for the scalar Laplacian.
There are two complications compared to that case: because the operator is nonlinear and acts on sections of a 0-edge bundle, the definition of the
indicial operator is more involved and depends on the metric;
and because the indicial operator acts differently on different parts of the isotypic 
decomposition of the metric tensor, there are in effect really several indicial operators.
This also occurs in the usual AH case -- see e.g. \cite{gl91}. In that case, however, the various parts of the indicial
operator are all algebraic, not differential operators.

At each order, we will have to solve a regular singular system of ODEs given by the indicial operators.
Because we have gauge-broken the Einstein equations by requiring the metric to be in normal form, the system is
overdetermined -- we have $\frac{n(n + 1)}{2}$ unknowns, but $\frac{(n + 1)(n + 2)}{2}$ equations. We will therefore follow the usual expedient of
using the Bianchi identities to show that the extra equations are automatically satisfied once we have determined the
solution using $\frac{n(n + 1)}{2}$ equations. It is by the Bianchi equations, as we will see, that $\chi$ will be uniquely determined at each order.

Because of the form of metric (\ref{polarformcop}), we will be interested in perturbations 
\begin{equation*}
  g \mapsto g + \rho^{\gamma}\varphi,
\end{equation*}
where $\gamma > 0$ and $\varphi = \varphi_{\mu\nu}\omega^{\mu}\omega^{\nu} = \rho^{-2}\csc^2(\theta)\varphi_{\mu\nu}dx^{\mu}dx^{\nu}$ 
is a section of the bundle $\mathcal{T} =
\left\{ \eta \in S^2({}^{0e}T^*\tX): \sin(\theta)\frac{\partial}{\partial\theta} \into \eta = 0\right\}$. A section $\sigma$ of $T$
can be identified with a one-parameter family $\sigma_{\theta}$ ($0 \leq \theta \leq \theta_0$) of sections
of $S^2({}^0T^*\tM)$ over $\tM$.
We will also refer to $\bar{\varphi} = \rho^2\sin^2(\theta)\varphi = \varphi_{\mu\nu}dx^{\mu}dx^{\nu}$, which is a section of $S^2T^*\tX$ with the property that
$\frac{\partial}{\partial\theta} \into \bar{\varphi} = 0$.
Fix a metric $g \in C^{\infty}(\tX,S^2({}^{0e}T^*\tX))$.
We now define the indicial operator $I^{\gamma}:C^{\infty}(\tS,\mathcal{T}) \to C^{\infty}(\tS,S^2({}^{0e}T^*\tX))$, depending on $g$, as follows.
Let $\varphi \in C^{\infty}(\tS,\mathcal{T})$ be a section, and let $\tilde{\varphi} \in C^{\infty}(\tX,\mathcal{T})$ be any smooth
extension of $\varphi$ to $\tX$.
Then define $I^{\gamma}(\varphi)$ by
\begin{equation*}
  I^{\gamma}(\varphi) = \rho^{-\gamma}\left(E(g + \rho^{\gamma}\tilde{\varphi})
   - E(g)\right)|_{\rho = 0},
\end{equation*}
where the restriction to $\rho = 0$ is taken as a section of $S^2({}^{0e}T^*\tX)$. The definition is independent of the extension $\tilde{\varphi}$ chosen.
As in the scalar case, $I^{\gamma}$ is an ordinary differential operator acting in $\theta$.

\begin{proposition}
  \label{einindprop}
  The indicial operator $I^{\gamma}$ for a metric $g$ in the normal form (\ref{polarformcop})
  has the form $I^{\gamma}(\varphi) = I^{\gamma}_{ij}(\varphi)\omega^i\omega^j$, where
  \begin{align}
    2I^{\gamma}_{00}(\varphi) = & -\sin^2(\theta)\brh^{\mu\nu}\partial_{\theta}^2\varphi_{\mu\nu} + \sin(\theta)
    \cos(\theta)\brh^{\mu\nu}\partial_{\theta}
    \varphi_{\mu\nu}\label{I00}\\
    2I^{\gamma}_{0\sigma}(\varphi) = & \sin^2(\theta)\left[\left( \gamma - n\right)\rho^{\mu}\partial_{\theta}\varphi_{\mu\sigma}
    - \left( \gamma - 1\right) \rho_{\sigma}\brh^{\mu\nu}\partial_{\theta}\varphi_{\mu\nu}\right]\label{I0sigma}\\
    \begin{split}
    2I^{\gamma}_{nn}(\varphi) = & -\sin^2(\theta)\partial_{\theta}^2\varphi_{nn} + (n - 1)\sin(\theta)\cos(\theta)\partial_{\theta}\varphi_{nn}\\
    &+ \sin(\theta)\cos(\theta)\partial_{\theta}(\brh^{\mu\nu}\varphi_{\mu\nu})
    + (\gamma - 2)(\gamma + 1 - n)\sin^2(\theta)\varphi_{nn}\\
    &+ \gamma(2 - \gamma)\sin^2(\theta)\brh^{\mu\nu}\varphi_{\mu\nu}
   \end{split}\label{Inn}\\
   \begin{split}
     2\brh^{\mu\nu}I_{\mu\nu}^{\gamma}(\varphi) = &-\sin^2(\theta)\partial_{\theta}^2(\brh^{\mu\nu}\varphi_{\mu\nu}) + (2n - 1)
     \sin(\theta)\cos(\theta)\partial_{\theta}(
     \brh^{\mu\nu}\varphi_{\mu\nu})\label{itr}\\
     &+ 2(\gamma - n)(\gamma + 1 - n)\sin^2(\theta)\varphi_{nn} - 2\gamma(\gamma - n)\sin^2(\theta)\brh^{\mu\nu}\varphi_{\mu\nu}
   \end{split}\\
   2I_{sn}^{\iroot}(\varphi) =& -\sin^2(\theta)\partial_{\theta}^2\varphi_{sn} + (n - 1)\sin(\theta)\cos(\theta)\partial_{\theta}\varphi_{sn}\label{Isn}\\
   \begin{split}
   2\mathring{I}_{st}^{\iroot}(\varphi) =& -\sin^2(\theta)\partial_{\theta}^2\mathring{\varphi}_{st} + 
   (n - 1)\sin(\theta)\cos(\theta)\partial_{\theta}\mathring{\varphi}_{st}\\
   & - \gamma(\gamma + 1 - n)\sin^2(\theta)
   \mathring{\varphi}_{st}\label{Itf},
   \end{split}
  \end{align}
  where $\mathring{\varphi}_{st} = \varphi_{st} - \frac{1}{n - 1}\brh^{rq}\varphi_{rq}\brh_{st}$, and similarly for $\mathring{I}_{st}$.
\end{proposition}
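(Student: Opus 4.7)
The plan is to compute each component of $I^{\gamma}$ by directly linearizing the formulas for $E_{ij}$ given in Lemma \ref{eineqslem}. For a perturbation $g \mapsto g + \rho^{\gamma}\tilde{\varphi}$, which in coordinates corresponds to $\brh \mapsto \brh + \rho^{\gamma}\varphi$ with $\varphi = \varphi_{\mu\nu}dx^{\mu}dx^{\nu}$, the indicial operator is obtained by taking the linear part in $\varphi$, dividing by $\rho^{\gamma}$, and restricting to $\rho = 0$. Since the definition is independent of the extension $\tilde{\varphi}$, we may choose $\tilde{\varphi}$ to be $\rho$-independent; all terms of the linearization involving $\partial_{\rho}\tilde{\varphi}$ must then drop out after the restriction to $\rho = 0$.

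A crucial set of simplifications comes from two observations about $g$ at $\rho = 0$. First, by Theorem \ref{normformthm}, $\partial_{\theta}\brh|_{\rho = 0} = 0$; differentiating in $\theta$ gives $\partial_{\theta}^{2}\brh|_{\rho = 0} = 0$ as well, killing every nonlinear term in $\partial_{\theta}\brh$ that arises upon linearizing. Second, the special-defining-function convention forces $\brh|_{\rho=0} = d\rho^{2} + k_{0}$, a product metric. Hence $\brh^{nn}|_{\rho = 0} = 1$, $\brh^{sn}|_{\rho = 0} = 0$, $|d\rho|^{2}_{\brh}|_{\rho = 0} = 1$, and $\overline{\Gamma}^{n}_{\mu\nu}|_{\rho = 0} = 0$; in particular $\nabla_{\mu}\rho_{\nu}|_{\rho = 0} = 0$ and $\nabla^{\eta}\rho_{\eta}|_{\rho = 0} = 0$. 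The $\gamma$-dependence enters only through $\partial_{\rho}(\rho^{\gamma}\tilde{\varphi}) = \gamma\rho^{\gamma-1}\tilde{\varphi} + \ldots$ and $\partial_{\rho}^{2}(\rho^{\gamma}\tilde{\varphi}) = \gamma(\gamma-1)\rho^{\gamma-2}\tilde{\varphi} + \ldots$, where a $\rho$ is absorbed by an explicit factor of $\rho$ in Lemma \ref{eineqslem} and the remaining factor is absorbed by dividing by $\rho^{\gamma}$.

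With these observations, the work proceeds one component at a time. Formula (\ref{I00}) follows immediately from linearizing (\ref{e00}), since the simplifications kill every term except the two $\theta$-derivative terms acting on $\varphi$. Formula (\ref{I0sigma}) comes from (\ref{e0sigma}): the divergence terms $\rho(\nabla^{\mu}\partial_{\theta}\brh_{\sigma\mu} - \nabla_{\sigma}(\partial_{\theta}\brh)^{\mu}_{\mu})$ contribute the $\gamma$-terms through the standard variation formula $\delta\overline{\Gamma}^{\sigma}_{\mu\nu} = \tfrac{1}{2}\brh^{\sigma\tau}(\nabla_{\mu}\delta\brh_{\nu\tau} + \nabla_{\nu}\delta\brh_{\mu\tau} - \nabla_{\tau}\delta\brh_{\mu\nu})$, where differentiating $\rho^{\gamma}$ produces $\gamma\rho^{\gamma-1}\rho_{\mu}$ factors that compete with the explicit $\rho$. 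For the $(\mu,\nu)$-components, the $\gamma$-dependence is accumulated from three distinct sources in (\ref{emunu}): the factor $(|d\rho|^{2}_{\brh} - 1) = -\rho^{\gamma}\varphi^{nn} + O(\rho^{\gamma+1})$, which at $\rho = 0$ contributes $-\varphi_{nn}\brh_{\mu\nu}|_{\rho=0}$ times $(1 - n)\sin^{2}\theta$; the variation of $\overline{\Gamma}^{n}_{\mu\nu}$ inside $\rho\nabla_{\mu}\rho_{\nu}$ and $\rho\nabla^{\eta}\rho_{\eta}\brh_{\mu\nu}$, each of which yields factors proportional to $\gamma$; and the variation of $\rho^{2}\ric(\brh)_{\mu\nu}$, in which only the double $\rho$-derivative $\partial_{\rho}^{2}(\rho^{\gamma})$ of the second-covariant-derivative terms in the Lichnerowicz-type linearized Ricci formula contributes $\tfrac{\gamma(\gamma-1)}{2}[-\varphi_{\mu\nu} - \delta^{n}_{\mu}\delta^{n}_{\nu}\brh^{\alpha\beta}\varphi_{\alpha\beta} + \delta^{n}_{\mu}\varphi_{n\nu} + \delta^{n}_{\nu}\varphi_{n\mu}]$. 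Assembling these and reading off the $(nn)$ entry, the $(sn)$ entry, the $\brh$-trace, and the trace-free piece gives (\ref{Inn})--(\ref{Itf}).

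The main obstacle is purely organizational bookkeeping. Each of the five tensorial components (\ref{Inn})--(\ref{Itf}) collects contributions from up to seven separate terms in (\ref{emunu}), and sign and contraction errors are easy to make. The cleanest cross-check is (\ref{Itf}): in the trace-free tangential sector the $\varphi_{nn}$ and trace-of-$\varphi$ contributions from $(|d\rho|^{2} - 1)\brh_{\mu\nu}$, from $\rho\nabla^{\eta}\rho_{\eta}\brh_{\mu\nu}$, and from the trace parts of $\rho^{2}\ric(\brh)$ all drop out, leaving only the purely wave-type operator $-\sin^{2}\theta\,\partial_{\theta}^{2}\mathring{\varphi}_{st} + (n-1)\sin\theta\cos\theta\,\partial_{\theta}\mathring{\varphi}_{st} - \gamma(\gamma+1-n)\sin^{2}\theta\,\mathring{\varphi}_{st}$; verifying this consistency at one cancellation then pins down the normalization of every other component.
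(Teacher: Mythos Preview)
Your approach is essentially the same as the paper's: substitute $\brh \mapsto \brh + \rho^{\gamma}\varphi$ into the explicit formulas for $E_{ij}$ in Lemma~\ref{eineqslem}, linearize, divide by $\rho^{\gamma}$, and restrict to $\rho = 0$, using $\partial_{\theta}\brh|_{\rho=0} = 0$ to kill the nonlinear $\partial_{\theta}\brh$ terms and the variation of $\brh^{\mu\nu}$. The paper only writes out the $I_{00}^{\gamma}$ case and declares the rest ``similar''; you supply more of the bookkeeping (the choice of a $\rho$-independent extension, the three sources of $\gamma$-dependence in $E_{\mu\nu}$, and the explicit Lichnerowicz-type contribution from $\rho^{2}\ric(\brh)$), but the method is identical.
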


Note that on each fiber $F$ of $\tS$, $I$ restricts to an operator $I:C^{\infty}(F,\mathcal{T}) \to C^{\infty}(F,S^2({}^{0e}T^*\tX))$.

\begin{proof}
  We set $\hat{g} = g + \rho^{\gamma}\varphi_{\mu\nu}\omega^{\mu}\omega^{\nu}$. Writing $\hat{g}$ in the form (\ref{polarformcop}),
  we see that the change $g \mapsto \hat{g}$ is equivalent to the change $\brh_{\mu\nu}dx^{\mu}dx^{\nu} \mapsto
  (\brh_{\mu\nu} + \rho^{\gamma}\varphi_{\mu\nu})dx^{\mu}dx^{\nu}$.
  We use this expression in equations (\ref{e00}) - (\ref{emunu}) to compute $I_{00}^{\gamma}$, $I_{0\sigma}^{\gamma}$, and
  $I_{\mu\nu}^{\gamma}$,
  using the formula $I^{\gamma}(\varphi) = \rho^{-\gamma}\left[ E(g + \rho^{\gamma}\tilde{\varphi}) - E(g) \right]|_{\rho = 0}$.
  We then specialize $I_{\mu\nu}^{\gamma}$ with various choices of $\mu$ and $\nu$ to obtain the result. We will carry out only the
  computation for $I_{00}$; the rest are similar.

  Because $\partial_{\theta}\brh_{\theta}|_{\rho = 0} = 0$, then in particular $\partial_{\theta}\brh_{\theta} = O(\rho)$ and $\partial_{\theta}\brh^{-1} = O(\rho)$.
  It follows that the effect of the perturbation on the inverse metric $\brh^{\mu\nu}$ may be ignored, as may the last
  term in (\ref{e00}). Formula (\ref{I00}) then follows immediately.
\end{proof}

Before proving Proposition \ref{auxprop}, we define some notation that will be useful.
Notice that, by the product structure of $\tX$, there is a natural decomposition ${}^{0e}T\tX \approx {}^{0e}T[0,\theta_0] \oplus {}^{0e}TS \oplus {}^{0e}T[0,\varepsilon)$,
where the summands on the right have their obvious meanings.
Similarly, there is a natural decomposition ${}^0T\tM \approx {}^0TS  \oplus {}^0T[0,\varepsilon)$. For a section $T \in C^{\infty}(\tX,S^2({}^{0e}T^*\tX))$, we let
$T|_{TS}$ be the restriction of $T$ to the middle factor in the above three-way decomposition.
Now if $k$ is a metric on $S$, then $\rho^2k$ is a metric on ${}^0TS$. For $T \in C^{\infty}(\tX,S^2({}^{0e}T^*\tX))$ and $k$ a metric on $S$, we will define the notation
\begin{equation*}
  \tf_kT := \tf_{\rho^2k}(T|_{TS}),
\end{equation*}
so that $\tf_kT$ is a section in $\csc^2(\theta)C^{\infty}(\tX,S^2({}^0TS))$.
In components, this takes the usual form
\begin{equation*}
  (\tf_kT)_{st} = T_{st} - \frac{1}{n - 1}k^{pq}T_{pq}k_{st}.
\end{equation*}
Similarly, if $T$ is a section of $S^2({}^0T^*M)$, then $\tf_{k}T$ will refer to $\tf_k(T|_{TS})$. We will also use the notation $\tf_kT$ in its usual sense when $T$ is an ordinary
symmetric two-tensor.

\endgroup

\begin{proof}[Proof of Proposition \ref{auxprop}]
  We will construct a solution order-by-order in $\rho$. At each step, we will solve the regular singular system of ODEs given by the operators
  (\ref{I00}) - (\ref{Itf}). As mentioned earlier, we will actually use only some of the equations to solve for $\varphi$, and will show that the others are satisfied by
  our solution using the Bianchi identity.

  We are determining $h_{\theta}$ in (\ref{polarformcop}); although we will work instead with $\brh_{\theta} = \rho^2h_{\theta}$. Our boundary condition at
  $\theta = \theta_0$ is that $\partial_{\theta}\brh_{\theta}|_{\theta = \theta_0} = 0$. 
  At $\tM$, our boundary condition is that $\brh_0 = \chi(d\rho^2 + k_{\rho})$, where $\chi$ is as-yet an undetermined function.

  We assume for now that $n > 2$.

  We define $\brh^{(0)}_{\theta} = d\rho^2 + k_{\rho}$, and $g^{(0)} = \csc^2(\theta)(d\theta^2 + \rho^{-2}\brh^{(0)})$. 
  It is straightforward to show using
  Lemma \ref{eineqslem} that $E^{(0)} := \ric(g^{(0)}) + ng^{(0)} = O_{g}(\rho)$ -- only terms on the last two lines
  of (\ref{emunu}) are nonvanishing. We similarly define $\chi^{(0)} \in C^{\infty}(M)$
  by $\chi^{(0)} \equiv 1$.

  We will now proceed by induction. Let $1 \leq \gamma \leq n - 1$, and suppose for purpose of induction that we have a
  metric $g^{(\gamma - 1)} = \csc^2(\theta)\left[ d\theta^2 + \rho^{-2}\brh^{(\gamma - 1)}_{\theta} \right]$ and a smooth function 
  $\chi^{(\iroot - 1)} \in C^{\infty}(\tM)$
  such that 
  \begin{enumerate}[{(i)}]
    \item $\partial_{\theta}\brh^{(\gamma - 1)}_{\theta}|_{\rho = 0} = 0$;\label{firstindcond}
    \item $\brh^{(\gamma - 1)}_{\theta} \in \mathcal{H}_{n,l}(\theta_0,M,S^2T^*M)$ for some $l$;
    \item $\chi \in C^{\infty}(M)$;
    \item $\partial_{\theta}\brh^{(\gamma - 1)}_{\theta}|_{\theta = \theta_0} = 0$;
    \item $\chi^{(\gamma - 1)}|_{\rho = 0} = 1$;
    \item $\brh^{(\gamma - 1)}_{\theta}|_{\theta = 0} = \chi^{(\gamma - 1)}(d\rho^2 + k_{\rho})$;\label{0cond}
    \item\label{eincond} $E^{(\gamma - 1)} := E(g^{(\gamma - 1)}) = O_g(\rho^{\gamma})$;
    \item $\brh^{(\gamma - 1)}_{\theta}$ is even in $\theta$ through order $n$ if $n$ is even, or infinite order if $n$ is odd;\label{auxcond1}
    \item $\rho^{-\gamma}\tf_{k_0}E^{(\gamma - 1)}|_{\rho = 0} \in \mathcal{B}_{n,m}(\theta_0,S,S^2({}^0T^*M))$ for some $m$; and\label{auxcondB}\label{auxcondlast}
    \item $\chi^{(\gamma - 1)}$ and $\brh^{(\gamma - 1)}_{\theta}$ satisfying conditions (\ref{firstindcond}) - (\ref{eincond}) are uniquely defined modulo $O(\rho^{\gamma})$.\label{lastindcond}
  \end{enumerate}
  We wish to show that we can construct a function $\chi^{(\gamma)}$ and a family of metrics $\brh^{(\gamma)}_{\theta}$ such that these conditions
  are all satisfied with $\gamma$ everywhere replaced by $\gamma + 1$. (Conditions (\ref{auxcond1}) - (\ref{auxcondlast}) will be used at several points
  in the induction step.)
  Put differently, we wish to show that we may uniquely define perturbations
  $\varphi^{(\gamma)} \in \csc^2(\theta)\mathcal{H}_{n,l'}(\theta_0,S,S^2({}^0T^*M))$ (for some $l'$) and 
  $\psi^{(\gamma)} \in C^{\infty}(S)$ such that, taking $\brh^{(\gamma)} = \brh^{(\gamma - 1)} + \rho^{\gamma}\bar{\varphi}^{(\gamma)}$ and
  $\chi^{(\gamma)} = \chi^{(\gamma - 1)} + \rho^{\gamma}\psi^{(\gamma)}$, the desired conditions are satisfied. Actually, condition
  (\ref{0cond}) will be satisfied only through order $\gamma$, due to the impact on higher-order terms on the right-hand side of changing
  $\chi$ at order $\gamma$; we will restore (\ref{0cond}), however, without affecting any other conditions
  by adding one more perturbation that is independent of $\theta$.
  We will henceforth refer just to $\varphi$ and $\psi$, leaving the $(\gamma)$ implicit.

  Define $f = \rho^{-\gamma}E^{(\gamma - 1)}|_{\rho = 0} \in C^{n - 1}(\tS,S^2({}^{0e}T^*\tX)|_{\tS})$.
  It is easy to see that we will have completed the induction if we can find $\varphi$ and $\psi$ such that
  \begin{enumerate}[{(1)}]
    \item $I^{\gamma}(\varphi) = -f$, where $I^{\gamma}$ is the indicial operator defined above;\label{firststepcond}
    \item $\varphi_{nn}|_{\theta = 0} = \psi$\label{nnbound};
    \item $\brh^{\mu\nu}\varphi_{\mu\nu}|_{\theta = 0} = n\psi$\label{tracebound};
    \item $\varphi_{ns}|_{\theta = 0} = 0$;
    \item $\mathring{\varphi}_{st}|_{\theta = 0} = 0$;
    \item $\partial_{\theta}\bar{\varphi}|_{\theta = \theta_0} = 0$;\label{laststepcond}
    \item $\bar{\varphi}$ is even in $\theta$ through order $n$ for even $n$, or infinite order if $n$ is odd;
    \item $\bar{\varphi} \in \mathcal{H}_{n,l'}(\theta_0,S,S^2T^*M)$ for some $l'$;
    \item $\psi \in C^{\infty}(\tM)$; and
    \item $\left.\rho^{-(\gamma + 1)}\tf_{k_0}E\left(g^{(\gamma)}\right)\right|_{\rho = 0} \in \mathcal{B}_{n,m'}(\theta_0,S,S^2({}^0T^*M))$ for some $m'$\label{auxshow},
  \end{enumerate}
  with $\varphi$ and $\chi$ determined uniquely by conditions (\ref{firststepcond}) - (\ref{laststepcond}).

  Fix $x \in \tM \cap \tS \approx S$, which we regard as determining a fiber. We will determine $\varphi$ and $\psi$ on the fiber
  $[0,\theta_0] \times \left\{ x \right\} \times \left\{ 0 \right\}$.
  Since our constructions will all depend smoothly on $x$, we suppress it when convenient and write $\varphi$ as a function of $\theta$ alone.
  We first regard $\psi(x)$ as a free parameter and show that, for any choice of $\psi(x)$, $\varphi(x,\theta)$ is uniquely determined. Thus, for now regard
  $\psi(x)$ as given, and $\chi^{(\iroot)} = \chi^{(\iroot - 1)} + \rho^{\iroot}\psi$. 
  
  We first determine $\mathring{\varphi}_{st} = \tf_{k_0}\bar{\varphi}$. 
  Our boundary condition at $\theta = \theta_0$ is, as noted above, $\partial_{\theta}\bar{\varphi}|_{\theta = \theta_0} = 0$. Our boundary condition at
  $\theta = 0$ is $\mathring{\varphi}_{st} = 0$. Now we wish to solve
  $\mathring{I}_{st}(\varphi) = -\mathring{f}_{st}$.
  By (\ref{Itf}), $\mathring{I}$ acts as a scalar on $\mathring{\varphi}$. Moreover, $\mathring{I}$ is merely $-\frac{1}{2}$ times
  the indicial operator of the scalar Laplacian, by Lemma \ref{indexlem}. Now by (\ref{auxcondB}), $\mathring{f}_{st} \in \mathcal{B}_{n,m}(\theta_0,S,S^2({}^0T^*M))$, and so
  by Propositions \ref{specprop} and  \ref{indexprop}, the equation $\mathring{I}_{st}(\varphi) = 
  -\mathring{f}_{st}$
  has a unique solution $\mathring{\varphi}_{st}$ in $\mathcal{A}_{n,m}(\theta_0,S,S^2({}^0T^*M))$. By induction and (\ref{emunu}), $\mathring{f}_{st}$ is even in $\theta$ through order $n$ if $n$ is even,
  or infinite order otherwise. Thus, by the form (\ref{greensop}) of the Green's operator,
  we also may conclude that if $n$ is even, then $\mathring{\varphi}_{st}$ is also even to the order $n$ in $\theta$.

  Now suppose $n$ is odd. Because $\mathring{f}_{st}$ is even to infinite order, and in particular through order $n + 2$, it follows by Proposition \ref{indexprop}
  that $\mathring{\bar{\varphi}}$ is smooth and even to infinite order, and contains no logarithmic terms. Whether $n$ is even or odd, then, $\mathring{\varphi}_{st} \in
  \mathcal{H}_{n,m}(\theta_0,S,S^2T^*M)$.

  We next determine the trace $\bar{h}^{\mu\nu}\varphi_{\mu\nu}$, which for convenience we denote $\ell(\theta)$ for the remainder
  of this proof. Because of the overdetermined
  nature of our system, it would appear \emph{a priori} possible to use either $I_{00}^{\iroot}$ or $\bar{h}^{\mu\nu}I_{\mu\nu}^{\iroot}$ to do this.
  However, $I_{00}^{\gamma}$ is simpler because it involves only the trace of $\varphi$, whereas $\brh^{\mu\nu}I_{\mu\nu}^{\gamma}$ involves
  both the trace and $\varphi_{nn}$. (Because $\brh$ at $\rho = 0$ is independent of $\theta$, we may regard $I_{00}^{\gamma}$ as giving a differential
  equation for $\ell$.) We thus proceed with
  $I_{00}^{\iroot}$. As usual, we wish to solve the equation
  $I_{00}^{\iroot}(\varphi) = -f_{00}$, subject to the conditions $\ell(0) = n\psi(x)$ and $\ell'(\theta_0) = 0$. 
  We claim that $f_{00} = O(\theta^4)$: by the induction hypothesis, $\partial_{\theta}\brh^{(\gamma - 1)} = O(\theta)$, so the last
  term in (\ref{e00}) is $O(\theta^4)$. The other two terms may be written
  $-\sin^2(\theta)(\brh^{\mu\nu}\partial_{\theta}^2\brh_{\mu\nu} - \cot(\theta)\brh^{\mu\nu}\partial_{\theta}\brh_{\mu\nu})$.
  But since $\brh^{(\gamma - 1)}$ is even in $\theta$ to order $n$, the term in parenthesis is $O(\theta^{2})$, as claimed. Moreover,
  since $\partial_{\theta}\brh_{\theta}$ is even through order $n$ or infinity in $\theta$ (depending on parity), it follows from (\ref{e00}) that $f_{00}$ is as well.
  It is easy to verify that a solution to the equation $I_{00}^{\gamma}\ell = -f_{00}$ satisfying $\ell(0) = n\psi(x)$ and
  $\ell'(\theta_0) = 0$ is given by
  \begin{equation*}
    \begin{split}
      \ell(\theta) =& 2\left(\cos(\theta)\int_{\theta}^{\theta_0} \csc^3(\phi)f_{00}(\phi)d\phi + \int_0^{\theta}\cot(\phi)\csc^2(\phi)f_{00}(\phi)d\phi\right.\\
      &- \left.\int_0^{\theta_0}\csc^3(\phi)f_{00}(\phi)d\phi\right) + n\psi(x).
  \end{split}
  \end{equation*}
  The solution is easily shown to be unique: the homogeneous equation is linear, with general solution $a\cos(\theta) + b$. Given the requirements
  that $\ell(0) = 0 = \ell'(\theta_0)$, we may deduce that $a = 0 = b$. Furthermore, since $f_{00}$ is even in $\theta$ through order $n$ or infinity, the solution
  $\ell(\theta)$ is as well. This may be easily verified by differentiating the above solution formula, obtaining
  $\ell'(\theta) = -\sin(\theta)\int_{\theta}^{\theta_0}\csc^3(\phi)f_{00}(\phi)d\phi$. Similarly, Lemma \ref{einmaplem} implies that
  $f_{00} \in \mathcal{H}_{n,l'}(\theta_0,S)$ for some $l'$, and since $\int \theta^p \log(\theta)^qd\theta = O(\theta^{p + 1}\log(\theta)^q)$, we conclude that
  $\ell(\theta) \in \mathcal{H}_{n,l'}(\theta_0,S)$ as well.
  
  Next we wish to determine $\varphi_{nn}$, for which we use $I_{0n}^{\gamma}$. We get the equation
  \begin{equation*}
    (\iroot - n)\partial_{\theta}\varphi_{nn} = (\iroot - 1)\ell'(\theta) - f_{0n}(\theta),
  \end{equation*}
  with conditions $\varphi_{nn}(0) = \psi(x)$ and $\varphi_{nn}'(\theta_0) = 0$. Now $\ell'(\theta_0) = 0$ by construction. Moreover,
  by (\ref{e0sigma}) and our inductive hypothesis -- according to which $\partial_{\theta}\brh^{(\gamma - 1)}|_{\theta = \theta_0} = 0$ --
  we see that $f_{0n}(\theta_0) = 0$ as well. Thus, the right hand side vanishes at $\theta = \theta_0$, and
  our boundary condition at $\theta_0$ is satisfied automatically. We can therefore integrate to
  uniquely determine $\varphi_{nn}$ subject to the condition that $\varphi_{nn}(0) = \chi(x)$. By construction of $\ell(\theta)$ and
  by (\ref{e0sigma}), the right-hand side of the above equation is odd through order $n - 1$ or through order infinity; therefore, $\varphi_{nn}(\theta)$ is even
  through order $n$ or infinity (depending on parity). A similar argument as for the trace also shows that $\varphi_{nn} \in \mathcal{H}_{n,l'}(\theta_0,S)$.

  We have only to determine $\varphi_{ns}$, which is to say, $\left(\frac{\partial}{\partial \rho} \into \bar{\varphi}\right)|_{TS}$. To do this, we use $I_{0s}$. We get the equation
  \begin{equation*}
    (\iroot - n)\partial_{\theta}\varphi_{ns} = -f_{0s}(\theta).
  \end{equation*}
  As in the previous case, the boundary condition at $\theta_0$ is automatically satisfied, and we can integrate to get a unique solution satisfying
  our conditions; parity is preserved as desired, and $\varphi_{ns} \in \mathcal{H}_{n,m}(\theta_0,S,T^*M)$.

  We have determined $\varphi$, and thus have constructed a $g^{(\gamma)}$ so that $E_{00}^{\gamma}, E_{0\sigma}^{\gamma}$, and $\mathring{E}_{st}^{\gamma} =
  O_g(\rho^{\gamma + 1})$. However, it remains to analyze
  $\bar{h}^{\mu\nu}E_{\mu\nu}^{\gamma}$, $E_{nn}^{\gamma}$, and $E_{ns}^{\gamma}$, since their corresponding indicial operators were not used in our construction.
  (We will henceforth omit the $(\gamma)$ from $E$ for clarity.)
  For this, we will use the contracted Bianchi identities, which state that $2\nabla^iE_{ij} = \nabla_jE_{i}^i$; or working now in the coordinate frame, that
  \begin{equation*}
    0 = B_i := 2g^{jk}\partial_k\hat{E}_{ij} - g^{jk}\partial_i\hat{E}_{jk} - 2g^{jk}g^{ql}\Gamma_{jkq}\hat{E}_{il}.
  \end{equation*}
  We apply this to $g^{(\gamma)}$ using our earlier computations (\ref{christoff}) of Christoffel symbols. Still working in the coordinate frame, we find
  \begin{align*}
    \begin{split}
      B_0 =& \sin^2(\theta)\partial_{\theta}\hat{E}_{00} + 2\rho^2\sin^2(\theta)\bar{h}^{\mu\nu}\partial_{\nu}\hat{E}_{0\mu} - \rho^2\sin^2(\theta)\brh^{\mu\nu}
      \partial_{\theta}\hat{E}_{\mu\nu}\\
      &+ 2(1 - n)\sin(\theta)\cos(\theta)\hat{E}_{00} + \sin^2(\theta)\brh^{\mu\nu}\partial_{\theta}(\brh_{\mu\nu})\hat{E}_{00}\\
      &+ 2(2 - n)\rho\sin^2(\theta)
      \rho^{\lambda}\hat{E}_{0\lambda} - 2\rho^2\sin^2(\theta)\brh^{\mu\nu}\brh^{\eta\lambda}\overline{\Gamma}_{\mu\nu\eta}\hat{E}_{0\lambda};
  \end{split}\numberthis\label{B0}\\
  \intertext{and}
  \begin{split}
    B_{\sigma} =& 2\sin^2(\theta)\partial_{\theta}\hat{E}_{0\sigma} - \sin^2(\theta)\partial_{\sigma}\hat{E}_{00} + 2\rho^2\sin^2(\theta)\brh^{\mu\nu}
    \partial_{\nu}\hat{E}_{\mu\sigma}\\
    &-\rho^2\sin^2(\theta)\brh^{\mu\nu}\partial_{\sigma}\hat{E}_{\mu\nu} + 2(1 - n)\sin(\theta)\cos(\theta)\hat{E}_{0\sigma} \\
    &+\sin^2(\theta)\brh^{\mu\nu}\partial_{\theta}(\brh_{\mu\nu})\hat{E}_{0\sigma}
    +2(2 - n)\rho\sin^2(\theta)\rho^{\lambda}\hat{E}_{\sigma\lambda}\\
    & -2\rho^2\sin^2(\theta)\brh^{\mu\nu}\brh^{\eta\lambda}\overline{\Gamma}_{\mu\nu\eta}\hat{E}_{\sigma\lambda},
  \end{split}\numberthis\label{Bsigma}
  \end{align*}
  where $\overline{\Gamma}$ is the Christoffel symbol of $\brh$.

  We evaluate $B_0$ mod $O(\rho^{\gamma + 1})$, using the fact that we already know the following:
  \begin{equation*}
    \begin{array}{ccc}
      \hat{E}_{00} = O(\rho^{\iroot + 1}) & \hat{E}_{0\mu} = O(\rho^{\iroot}) & \mathring{\hat{E}}_{st} = O(\rho^{\iroot - 1})\\
      \hat{E}_{ns} = O(\rho^{\iroot - 2}) & \hat{E}_{nn} = O(\rho^{\iroot - 2}) & \brh^{\mu\nu}\hat{E}_{\mu\nu} = O(\rho^{\iroot - 2}).
    \end{array}
  \end{equation*}
  The first row are all $O_{g}(\rho^{\iroot + 1})$, as desired, but the second row are one order lower. Putting these into the equation for $B_0$
  and setting it equal to $0$ yields
  \begin{equation*}
    \brh^{\mu\nu}\partial_{\theta}\hat{E}_{\mu\nu} = O(\rho^{\iroot - 1}).
  \end{equation*}
  This says that $\rho^{1 -\iroot}\brh^{\mu\nu}\hat{E}_{\mu\nu}|_{\rho = 0}$ is a constant, say $\frac{c}{2}$. We need $c = 0$. By definition of the indicial operator, the equation
  $\rho^{1-\iroot}\brh^{\mu\nu}\hat{E}_{\mu\nu}|_{\rho = 0} = \frac{c}{2}$ is equivalent to saying
  $2\brh^{\mu\nu}I_{\mu\nu}(\varphi) + 2\brh^{\mu\nu}f_{\mu\nu} = c$. The left-hand side of this latter equation, of course, is already determined up
  to choice of $\psi$, since
  $\varphi$ is. Notice in (\ref{itr}) that $\brh^{\mu\nu}I_{\mu\nu}$ depends on $\varphi_{nn}$ and $\brh^{\mu\nu}\varphi_{\mu\nu}$, which in turn
  we have determined using the operators (\ref{I0sigma}) and (\ref{I00}), respectively. Neither of these operators has a zeroth-order part, and so
  adding $\delta$ to $\psi$ adds $\delta$ to $\varphi_{nn}$ and $n\delta$ to $\brh^{\mu\nu}\varphi_{\mu\nu}$, by our boundary conditions
  (\ref{nnbound}) and (\ref{tracebound}). Now using equation (\ref{itr}), but shifting it to the coordinate frame, we see that adding $\delta$ to $\psi$ adds
  $2(1 - n)(\gamma - n)(\gamma + 1)\delta$ to $2I_{\mu\nu}(\varphi)$. Thus, since $\gamma \neq n$, there is a unique choice of $\psi(x)$
  such that $c = 0$; and so we find that $\chi^{(\gamma)} = \chi^{(\gamma - 1)} + \rho^{\gamma}\psi$ is uniquely determined up through order
  $\gamma$ so that $\brh^{\mu\nu}E_{\mu\nu} = O_g(\rho^{\gamma})$; and there remains no further freedom in our system.

  It remains to analyze $\hat{E}_{n\sigma}$. We next look at $B_s$. We find that
  \begin{equation*}
    2\rho\partial_{\rho}\hat{E}_{ns} + 2(2 - n) \hat{E}_{ns} = O(\rho^{\iroot - 1}).
  \end{equation*}
  Now write $\hat{E}_{ns} = \xi_s\rho^{\iroot - 2}$, which we may do by the above computations. Putting this into our equation, we find
  \begin{equation*}
    (\nu - n)\xi_s = O(\rho^{\iroot - 1}),
  \end{equation*}
  as desired.

  Before proceeding to $\hat{E}_{nn}$, we note that we can write
  $\hat{E}_{\mu\nu} = \alpha\rho^{\iroot - 2}\rho_{\mu}\rho_{\nu} + 2\xi_{(\mu}\rho_{\nu)} + \frac{1}{n}\ell \brh_{\mu\nu} + \eta_{\mu\nu},$ where
  $\rho^{\mu}\xi_{\mu} = 0 = \brh^{\mu\nu}\eta_{\mu\nu}$ and $0 = \rho^{\mu}\eta_{\mu\nu}$, and finally $\eta_{\mu\nu} =
  \eta_{(\mu\nu)}$. Then every term here except $\alpha\rho^{\gamma - 2}$ is $O(\rho^{\iroot - 1})$ by our earlier analysis. Now using the Bianchi identity
  $B_n = 0$, we find
  \begin{equation*}
    2\rho\partial_{\rho}\hat{E}_{nn} + 2(2 - n)\hat{E}_{nn} = O(\rho^{\gamma - 1}).
  \end{equation*}
  Since $\hat{E}_{nn} = \alpha\rho^{\iroot - 2}$, we find that $(\iroot - n)\alpha = O(\rho)$, and thus, since $\iroot \neq n$,
  we conclude that $\hat{E}_{nn} = O(\rho^{\iroot - 1})$.

  Thus, $E(g^{(\gamma)}) = O_{g}(\rho^{\iroot + 1})$. Now $\brh^{(\gamma)}_{\theta}$ 
  lies in $\mathcal{H}_{n,l'}(\theta_0,S,S^2T^*M)$ for some $l'$, is even to order $n$ in $\theta$, and satisfies our boundary
  conditions. Also it is unique subject to these conditions. It remains only to show that (\ref{auxshow}) obtains. This is trivial
  if $n$ is odd; so let $n$ be even.
  
  Consider equation (\ref{emunu}). Let $v$ be any term on the right hand side except for the first two and except for
  \begin{equation*}
    w = \frac{1}{2}\sin(\theta)\cos(\theta) \brh^{\eta\lambda}\partial_{\theta}(\brh_{\eta\lambda})\brh_{\mu\nu};
  \end{equation*}
  then since $\brh_{\theta} \in \mathcal{A}_{n,l'}(\theta_0,S,S^2T^*M)$,
  it follows easily that for every $j \geq 0$, we have
  \begin{equation*}
    \partial_{\rho}^jv|_{\rho = 0} \in \mathcal{B}_{n,m'}(\theta_0,S,S^2({}^0T^*M)) \text{ (some $m'$)}.
  \end{equation*}
  For example, take $v = \frac{1}{2}\sin^2(\theta)\brh^{\eta\lambda}\partial_{\theta}(\brh_{\mu\eta})\partial_{\theta}(\brh_{\nu\lambda})$.
  The lowest order at which $\log(\theta)$ can appear in $\partial_{\theta}\brh_{\theta}$ is at power $\theta^{n - 1}$. Since there is a factor
  of $\sin^2(\theta)$, $\log(\theta)$ therefore does not appear in $v$ before order $\theta^{n + 1}$ (and in fact $\theta^{n + 2}$, since
  $\partial_{\theta}\brh_{\theta} = O(\theta)$ as well). Similarly, for $i > 1$, $\log(\theta)^i$ never appears before order
  $\theta^{in}$, due to the factor of $\sin^2(\theta)$ and the hypothesis that $\brh_{\theta} \in \mathcal{A}_{n,l'}(\theta_0,S,S^2T^*M)$. The remaining terms
  are similar.
  Likewise, if $v$ is the \emph{sum} of the first two terms, we have the same result, because $n$ is an indicial root at $\theta = 0$
  of the operator $-\sin^2(\theta)\partial_{\theta}^2 + (n - 1)\sin(\theta)\cos(\theta)\partial_{\theta}$.
  Now $0 = \partial_{\rho}^{\gamma}\left(E_{\mu\nu}^{(\gamma)}\right)|_{\rho = 0}$; since the only term in (\ref{emunu}) that might contribute a term
  of the form $\theta^n\log(\theta)$ is $w$, we may conclude that, in fact, $w$ does not contribute such a term for
  $\brh^{(\gamma)}$ (as there is nothing to cancel it out). Since $\partial_{\theta}\brh_{\eta\lambda}^{(\gamma)} = O(\theta)$, and thus the $\theta^n\log(\theta)$ terms that might
  be present in $\brh^{\eta\lambda}$ and $\brh_{\mu\nu}^{(\gamma)}$ cannot contribute a $\log(\theta)$ to $w$ at order $\theta^n$,
  we conclude that $\partial_{\rho}^{j}(\sin(\theta)\cos(\theta)\brh^{\eta\lambda}\partial_{\theta}\brh_{\eta\lambda}^{(\gamma)})|_{\rho = 0}
  \in \mathcal{B}_{n,m'}(\theta_0,S)$ for $0 \leq j \leq \gamma$.

  Now consider $\tf_{k_0}(\rho^{-(\gamma + 1)}E^{(\gamma)})|_{\rho = 0}$. As we have seen by analyzing (\ref{emunu}), every term lies in
  $\mathcal{B}_{n,m'}(\theta_0,S,S^2({}^0T^*M))$ except possibly 
  \begin{align*}
    \tf_{k_0}(\rho^{-(\gamma + 1)}w)|_{\rho = 0} &= \frac{1}{(\gamma + 1)!}\tf_{k_0} \partial_{\rho}^{\gamma + 1}w|_{\rho = 0}\\
  &= \frac{1}{2(\gamma + 1)!}\tf_{k_0}\partial_{\rho}^{\gamma + 1}(\sin(\theta)\cos(\theta)
  \brh^{\eta\lambda}\partial_{\theta}(\brh_{\eta\lambda})\brh_{\mu\nu})|_{\rho = 0}.
  \end{align*}
  But since $\tf_{k_0}\brh_{st} = 0$, this term vanishes unless at least
  one factor of $\partial_{\rho}$ falls on $\brh_{\mu\nu}$. This leaves at most $\gamma$ derivatives to fall on $\sin(\theta)\cos(\theta)\brh^{\eta\lambda}\partial_{\theta}(\brh_{\eta\lambda})$; and as just
  seen, for any $j \leq \gamma$, $\partial_{\rho}^{j}(\sin(\theta)\cos(\theta)\brh^{\eta\lambda}\partial_{\theta}\brh_{\eta\lambda})|_{\rho = 0}
  \in \mathcal{B}_{n,m'}(\theta_0,S)$. We therefore may conclude that 
  \begin{equation*}
    \tf_{k_0}\rho^{-(\gamma + 1)}E^{(\gamma)}|_{\rho = 0} \in \mathcal{B}_{n,m'}(\theta_0, S,S^2({}^0T^*M)).
  \end{equation*}
  Thus, $\brh^{(\gamma)}_{\theta}$ satisfies all our desired conditions except (\ref{0cond}). As mentioned earlier, however,
  this is easily fixed. Set $b = \chi^{(\gamma)}(d\rho^2 + k_{\rho}) - \brh_0^{(\gamma)} \in C^{\infty}(\tM,S^2T^*\tM)$, and extend it to a section in
  $C^{\infty}(\tX,S^2T^*\tM)$ by making it constant in $\theta$. Now replace
  $\brh^{(\gamma)}$ by $\brh^{(\gamma)} + b$. Since $b$ is independent of $\theta$, this obtains condition
  (\ref{0cond}) without compromising our other conditions.

  And so by induction, we may construct $g^{(n - 1)}$ such that
  $E(g^{(n - 1)}) = O_g(\rho^n)$, satisfying the desired boundary condition at $\tM$ to order $\rho^n$ and to infinite order at $\tQ$. This completes
  the proof for $n > 2$, except for the claim that if $I_{n,n,\nu}$ is injective for all integral $\nu$, then we may solve the system to infinite order.
  As observed above, of course, $I_{n,n,\nu}$ is simply twice the tracefree part of the indicial operator of the Einstein problem; the absence of
  integral indicial roots simply means that we will be able to solve the tracefree equation $\mathring{I}_{st}^{\gamma}(\varphi) = -\mathring{f}_{st}$ for any integral $\gamma$
  (which, of course, are the only $\gamma$ we will encounter on our induction).
  Meanwhile, if $\gamma \notin \mathbb{Z}$ is some value for which $\mathring{I}^{\gamma}$ actually does fail to be injective, we may nevertheless simply choose the coefficient
  of $\rho^{\gamma}$ in the expansion of $\bar{h}$ to be $0$, without affecting our ability to expand indefinitely. In fact, we \emph{must} so choose the coefficients of $\rho^{\gamma}$ for non-integral $\gamma$,
  as our metric is supposed to be smooth. Hence there is no loss of uniqueness.

  Thus, our induction can proceed indefinitely, using the above arguments, with a single problem: at order $\gamma = n$, several crucial coefficients in the indicial operators vanish,
  causing our above induction-step arguments for components other than the tracefree tangential component to fail. Thus, we now provide an argument that at $\gamma = n$ we may (non-uniquely) extend
  $\brh^{(n - 1)}$ to $\brh^{(n)}$, so long as $n$ is not an indicial root; 
  the above arguments then go through once more at every higher order, so we can complete our induction. Note that the only loss of uniqueness occurs at order $n$, so given a single
  scalar choice at that order, uniqueness otherwise remains to infinite order.

  Let $\gamma = n$, then, and assume once more that conditions (\ref{firstindcond}) - (\ref{lastindcond}) hold. Let $f$ be as before. We again wish to find $\varphi$ and $\psi$ such that (\ref{firststepcond}) - (\ref{auxshow}) hold,
  except in this case not uniquely. In particular, $\psi$ will remain undetermined in this argument (and parametrizes our freedom). Thus, let $\psi \in C^{\infty}(S)$ be arbitrary. We have already seen, by the above remarks,
  that we may uniquely find $\mathring{\varphi}_{st}$ such that $\mathring{I}_{st}(\mathring{\varphi}) = -\mathring{f}_{st}$. The same arguments given in the previous case establish that $\mathring{\varphi}_{st}$ is
  even in $\theta$ to order $n$, or to infinite order if $n$ is odd, and is also smooth if $n$ is odd. Thus, $\mathring{\varphi}_{st} \in \mathcal{H}_{n,m}(\theta_0,S,S^2T^*M)$.

  Once again, we next determine the trace, $\ell = \brh^{\mu\nu}\varphi_{\mu\nu}$. However, this time we use the trace of the indicial operator; that is, we wish to solve the equation
  $\brh^{\mu\nu}I_{\mu\nu}(\varphi) = -\brh^{\mu\nu}f_{\mu\nu}$. Notice in (\ref{itr}) that, for $\gamma = n$ only, the operator $\brh^{\mu\nu}I_{\mu\nu}^{\gamma}$ is uncoupled from
  $\varphi_{nn}$. It is easy to see that the equation $\brh^{\mu\nu}I_{\mu\nu}^n\ell = -\brh^{\mu\nu}f_{\mu\nu}$, with initial conditions $\ell'(\theta_0) = 0$ and $\ell(0) = n\psi(x)$ has the unique solution
  \begin{equation*}
    \ell(\theta) = n\psi(x) - 2\int_0^{\theta_0}\sin^{2n - 1}(\phi)\int_{\theta_0}^{\theta}\csc^{2n - 1}(\beta)\brh^{\mu\nu}f_{\mu\nu}(\beta)d\beta d\phi.
  \end{equation*}
  If $n$ is odd, then $\brh^{\mu\nu}f_{\mu\nu}$ is even to infinite order, and thus $\ell$ is smooth and even to infinite order as well. If $n$ is even, $\ell(\theta)$ is smooth and even through (at least) order $n$, as
  desired.

  To determine the $\varphi_{nn}$ component, we can no longer use $I_{0n}$. However, as we have already determined $\ell$, we can use $I_{nn}$, as given in (\ref{Inn}). It is straightforward to see
  that the unique solution to $I_{nn}^n\varphi_{nn} = -f_{nn}$ satisfying $\varphi_{nn}(0) = \psi$ and $\varphi_{nn}'(\theta_0) = 0$ is
  $\varphi_{nn}(\theta) = \psi + u(\theta)$, where $u(\theta)$ is the unique solution to $I_{n,n - 2}u = 2f_{nn} + \sin(\theta)\cos(\theta)\ell'(\theta) - n(n - 2)\sin^2(\theta)\ell(\theta) + (n - 2)\sin^2(\theta)\psi(x)$
  satisfying $u(0) = 0 = u'(0)$, and where
  where $I_{n,n - 2}$ is the indicial operator for the scalar Laplacian analyzed in Proposition \ref{indexprop}. Since $I_{n,n - 2}$ is a bijection by that proposition, the existence and uniqueness of $u$ follow,
  and the desired parity and smoothness properties for $\varphi_{nn}$ follow from the same proposition and from the already-determined properties of $f$ and of $\ell$.

  Finally, $\varphi_{sn}$ may be easily and uniquely determined from the equation $I_{sn}^{n}\varphi_{sn} = -f_{sn}$, using (\ref{Isn}). Thus, we have uniquely determined $\varphi^{(n)}$ subject to our freedom in choosing
  $\psi$.

  It remains now to use the Bianchi identities again, this time to show that $E_{00}, E_{0n}$, and $E_{0s}$ vanish to the desired orders in $\rho$. Letting $E = \hat{E}_{ij}dx^idx^j$ be the Einstein tensor
  of $g^{(n)}$, we know the following by construction:
  \begin{equation*}
    \begin{array}{ccc}
      \hat{E}_{00} = O(\rho^{n}) & \hat{E}_{0\mu} = O(\rho^{n - 1}) & \mathring{\hat{E}}_{st} = O(\rho^{n - 1})\\
      \hat{E}_{ns} = O(\rho^{n - 1}) & \hat{E}_{nn} = O(\rho^{n - 1}) & \brh^{\mu\nu}\hat{E}_{\mu\nu} = O(\rho^{n - 1}).
    \end{array}
  \end{equation*}
  This time, the last entry in the first row and the entire second row are all $O_g(\rho^{n + 1})$, as we would like, but the first two are only $O_g(\rho^n)$ \emph{a priori}.
  We introduce functions $\alpha(\theta,x,\rho)$ and $\beta(\theta,x,\rho)$ defined by $\hat{E}_{00} = \alpha\rho^n$ and $\hat{E}_{0n} = \beta\rho^{n - 1}$. Using the Bianchi identity
  $B_0 = 0$ with (\ref{B0}) now yields
  \begin{equation*}
    \partial_{\theta}\hat{E}_{00} + 2(1 - n)\cot(\theta)\hat{E}_{00} + 2\rho^2\partial_{\rho}E_{0n} + 2(2 - n)\rho\hat{E}_{0n} = O(\rho^{n + 1})
  \end{equation*}
  which, at $\rho = 0$, gives us the equation
  \begin{equation}
    \label{balpheq}
    \alpha'(\theta) + 2(1 - n)\cot(\theta)\alpha(\theta) + 2\beta(\theta) = 0,
  \end{equation}
  where for notational convenience we regard $\alpha$ and $\beta$ as functions only of $\theta$ when $\rho = 0$.
  Similarly, using (\ref{Bsigma}) and the equation $B_{n} = 0$, we find
  \begin{equation}
    \label{bbeteq}
    2\beta'(\theta) + 2(1 - n)\cot(\theta)\alpha(\theta) - n\alpha(\theta) = 0
  \end{equation}
  Now, $\alpha$ and $\beta$ satisfy $\alpha(\theta_0) = 0 = \beta(\theta_0)$, which follows from our induction hypothesis, our construction of $\varphi$, and examination of equations
  (\ref{e00}) and (\ref{e0sigma}). Thus, by the uniqueness of solutions to
  first-order ODEs, we conclude that $\alpha \equiv \beta \equiv 0$. Hence, in fact $\hat{E}_{00} = O(\rho^{n + 1})$ and $\hat{E}_{0n} = O(\rho^{n})$.
  It is now completely straightforward to show, using $B_{s}$, that $\hat{E}_{0s} = O(\rho^{n})$, and we omit the details. Thus, $E(g^{(n)}) = O_g(\rho^{n + 1})$ as desired. The proof that (\ref{auxshow}) obtains
  is identical to the case for $\gamma < n$, and thus we omit it. Since the arguments given for $\gamma < n$ also work for $\gamma > n$, the claim that a solution $g$ exists satisfying $E(g) = O(\rho^{\infty})$ follows
  by induction and Borel's lemma. This concludes the case $n > 2$.

  If $n = 2$, the above proof needs slight modification. At the first step, we define
  $\brh_{\theta}^{(0)} = d\rho^2 + k_0$, which is constant in $\theta$ and also in $\rho$; and also define
  $\chi^{(0)} = 1$. It follows that $E^{(0)} = O_g(\rho^2)$,
  since the only term in the Einstein equations that does not vanish is the Ricci term in (\ref{emunu}). We need only solve
  one more equation, the first-order perturbation, to be done. We set $\brh_{\theta}^{(1)} = \brh_{\theta}^{(0)} + \rho\bar{\varphi}$.
  The equation we wish to solve is $I^{1}(\varphi) = 0$, with boundary
  conditions $\partial_{\theta}\bar{\varphi}|_{\theta = \theta_0} = 0$ and $\bar{\varphi}|_{\theta = 0} = \chi^{(1)}\partial_{\rho}\brh$,
  where $\chi^{(1)} = \chi^{(0)} + \rho\psi$.

  Now $E_{0\sigma}^{(0)} \equiv 0$ and $E_{00}^{(0)} \equiv 0$, so the above analysis of equations (\ref{I00}) and (\ref{I0sigma})
  goes through without problem; this determines $\brh^{\mu\nu}\varphi_{\mu\nu}$, $\varphi_{nn}$, and $\varphi_{ns}$. It remains to determine
  $\mathring{\varphi}_{st}$. But notice that when $n = 2$ and $\gamma = 1$, any constant is a solution to
  $\mathring{I}_{st}(\varphi) = 0$; so we may simply set $\mathring{\bar{\varphi}}_{st} = \chi\tf_{\brh}\partial_{\rho}\brh|_{\rho = 0}$. We have thus
  determined $g^{(1)}$, subject to the freedom in $\psi$; the Bianchi analysis goes through as before, determining $\psi$. Finally, if $n = 2$ is not an indicial
  root, the order-$n$ analysis above allows construction to higher order, as before.
\end{proof}

Notice that it is clear from the above proof that the Taylor coefficients of $\brh_{\theta}$ in $\rho$ are, through order $n - 1$, universal functions
of $\theta$ and of $k_0$, the derivatives of $k_{\rho}$ at $\rho = 0$, and their tangential derivatives. Moreover, the $j$th Taylor coefficient function
depends only of $\partial_{\rho}^ik_{\rho}|_{\rho = 0}$ for $j \leq i$.

It seems apparent that, by including appropriate powers of $\log(\rho)$ as in the proof of Theorem \ref{lapthm}, the above construction could be extended
to infinite order for any $\theta_0$; but we do not here undertake the calculations demonstrating this.

In general, of course, one might want a solution $g$ satisfying $\ric(g) = O_g(\rho^n\sin^\infty(\theta))$, or even
$O_g(\rho^{\infty}\sin^{\infty}(\theta))$. We here sketch an approach that would yield such a metric, although we omit details.
First, one could use the above theorem to obtain a metric $\tilde{g}$ satisfying $\ric(\tilde{g}) + n\tilde{g} = O_g(\rho^m)$ (where $m$ is as high as possible, or
possibly $\infty$). Then, pulling back by the diffeomorphism $\psi$ between $[0,1]_r \times M$ and $[0,\theta_0]_{\theta} \times M$ induced by defining $r = \frac{2(\csc(\theta) - \cot(\theta))}{\csc(\theta_0) - \cot(\theta_0)}$,
it is easy to show, using calculations from \cite{m16}, that the pullback metric $\psi^*\tilde{g}$ is in the usual AH normal form $\frac{dr^2 + \tilde{g}_r}{r^2}$ (but with each $\tilde{g}_r$ an AH metric), 
and that it still satisfies
$\ric(\psi^*\tilde{g}) + n\psi^*\tilde{g} = O(\rho^m)$. Let $\tilde{g}' = \psi^*\tilde{g}$.
We now can perform the inductive Fefferman-Graham construction (as in \cite{fg12})
at $\tM$ to show that one can find a perturbation $\Phi$, vanishing at $\tM$, and satisfying $\ric(\tilde{g}' + \Phi) + n(\tilde{g}' + \Phi) = O(r^{\infty})$.
In the Fefferman-Graham construction, the indicial operators are simply multiplication operators, so at each order in $r$, the perturbation $\Phi$ will be
$O(\rho^m)$. Therefore, by our analysis of the indicial operators above, we conclude that, letting $g' = \tilde{g}' + \Phi$, we will in fact obtain $\ric(g') + ng' = O(\rho^mr^{\infty})$.
Finally, since $\Phi = O_g(\rho^m)$, we can then take a cutoff function $\eta$ that is $1$ near $\tM$ and 0 near $\tQ$, and set $g = (\psi^{-1})^*(g' + \eta\Phi)$. By construction, then, $g$
will satisfy our boundary conditions at $\tQ$ and at $\tM$, and will also satisfy $\ric(g) + ng = O_g(\rho^m\sin^{\infty}(\theta))$.

If $\theta_0 > \frac{\pi}{2}$, then by Proposition \ref{specprop} there will be an indicial root $\gamma_0$ for $\mathring{I}_{st}^{\gamma}$ 
between $n$ and $n - 1$, and uniqueness in Proposition \ref{auxprop} will not be quite
to order $n$ without the requirement of smoothness; we will expect additional solutions with leading asymptotics at order $\rho^{\gamma_0}$.

We will now focus on the proof of Theorem \ref{invthm}. Suppose that $\theta_0 = \frac{\pi}{2}$. By Propositions \ref{piprop}, \ref{indexprop}, and \ref{einindprop}, we know that $n$ is an indicial root,
and that we can solve the tracefree part of the Einstein equations to order $O_g(\rho^{n + 1})$ by a smooth perturbation only if
the tracefree tangential part of $\rho^{-n}E(g^{(n - 1)})|_{\rho = 0}$ is orthogonal to $w_0 = \sin^{n}(\theta)$ with respect to the measure
$\sin^{-(n + 1)}(\theta)d\theta$.
This suggests a way to define the obstruction tensor promised in Theorem \ref{invthm}. Suppose $M^n$ is a manifold with boundary $S$,
and equipped with a metric $\tau$. Near $S$, we can uniquely define a diffeomorphism $\eta:S \times [0,\varepsilon)_{\rho} \hookrightarrow M$ so 
that $\eta^*\tau = d\rho^2 + k_{\rho}$, and so that $\eta|_{S \times \left\{ 0 \right\}} = \id_S$.
Then by Proposition \ref{auxprop}, there is a metric $g$ in the normal form (\ref{polarformcop}) on $\mathring{\tX}$, where
$\tX = \left[ 0,\frac{\pi}{2} \right] \times S \times [0,\varepsilon)$, and a function $\chi \in C^{\infty}(S \times [0,\varepsilon))$
such that \ref{firstcond} - \ref{lastcond} hold. Now notice that for any section ${}^0T \in C^{\infty}(\tS,S^2({}^{0e}T^*\tX))$ satisfying
$T = O_g(\sin^2(\theta))$, we can get a well-defined corresponding section
$T \in C^{\infty}(\tS,S^2T^*\tX)$ by setting $T = \rho^2({}^0T)$. Now observe that $E(g) \in C^{\infty}(S^2({}^{0e}T^*\tX))$, with 
$E(g) = O_g(\rho^n)$. In particular, $\rho^{-n}\tf_{k_0}E(g)|_{\rho = 0} \in C^{\infty}(\tS,S^2({}^{0e}T^*\tX))$. Moreover,
$E(g) = O_g(\sin^2(\theta))$. This follows easily from equations (\ref{e00}) - (\ref{emunu}), remembering that
$\partial_{\theta}\bar{h}_{\theta} = O(\theta)$ by evenness in $\theta$. Thus,
$\rho^2\left[ \rho^{-n}\tf_{k_0}E(g)|_{\rho = 0} \right] \in C^{\infty}(\tS,S^2T^*\tX)$. For shorthand, we write
$\rho^{2 - n}\tf_{k_0}E(g)|_{\rho = 0} \in C^{\infty}(\tS,S^2T^*\tX)$.
Then we define a smooth symmetric tracefree tensor $\mathcal{K}(\tau)$ on $S$ by
\begin{equation}
  \label{hinvform}
  \begin{split}
    \mathcal{K}(\tau) &= \left\langle \rho^{2 - n}\tf_{k_0}E(g)|_{\rho = 0},w_0\right\rangle_{\sin^{-(n + 1)}(\theta)d\theta}\\
    &= \rho^{2-n}\left.\int_0^{\frac{\pi}{2}}\csc(\theta)\mathring{E}(g)d\theta\right|_{\rho = 0} \in S^2T^*S,
  \end{split}
\end{equation}
where $\mathring{E}$ here refers to $\tf_{k_0}E$. 

\begin{proof}[Proof of Theorem \ref{invthm}]
  We first must show that $\mathcal{K}(\tau)$ is well defined. First, the integral (\ref{hinvform}) converges,
  since $E_{\mu\nu} = O(\theta)$ by (\ref{emunu}). Next, although $\brh_{\theta}$ is only determined mod $O(\rho^n)$,
  perturbations of the form $\brh_{\theta} \mapsto \brh_{\theta} + \rho^n\bar{\varphi}$ satisfying $\bar{\varphi}|_{\theta = 0} = 0$ and 
  $\partial_{\theta}\bar{\varphi}|_{\theta = \theta_0} = 0$ leave 
  $\left\langle \tf_{k_0}\rho^{2-n}E(g)|_{\tS,TS},w_0\right\rangle_{\sin^{-(n + 1)}(\theta)d\theta}$ unchanged,
  since $n$ is an indicial root of $\mathring{I}^n_{st}$ and, by Propositions \ref{piprop}, \ref{indexprop}, and \ref{einindprop}, the image of
  $\mathring{I}^n$ is orthogonal to $\sin^{n}(\theta)$. Thus, $\mathcal{K}(\tau)$ is well defined.

  Next, we must show that the conformal transformation law holds. Suppose $\hat{\tau} =
  \Omega^2\tau$, where $\Omega \in C^{\infty}(M)$. Let $\hat{\eta}:S \times [0,\varepsilon)_{\hat{\rho}}
  \hookrightarrow M$ be a diffeomorphism onto a neighborhood of $S$ so that
  $\hat{\eta}^*\hat{\tau} = d\hat{\rho}^2 + \hat{k}_{\hat{\rho}}$ and so that $\hat{\eta}|_{S \times \{0\}} = \id_S$. Let $\hat{h}_{\theta}$ 
  and $\hat{\chi}$ satisfy \ref{firstcond} - \ref{lastcond} in
  Proposition \ref{auxprop}, in particular with $\bar{\hat{h}}_{\theta_0} = \hat{\chi}(d\hat{\rho}^2 + \hat{k}_{\hat{\rho}})$.
  Similarly, we take $\hat{g} = \csc^2(\theta)[d\theta^2 + \hat{h}_{\theta}]$.

  Set $\tilde{h}_{\theta} = \hat{\eta}^*(\eta^{-1})^*h_{\theta}$, as well as $\tilde{\chi} = \hat{\eta}^*(\eta^{-1})^*\chi$ 
  and $\tilde{\rho} = \hat{\eta}^*(\eta^{-1})^*\rho$. Similarly set $\widehat{\Omega} = \hat{\eta}^*\Omega$. Now plainly 
  \begin{equation*}
    \tilde{\rho}^2\tilde{h}_0 = \tilde{\chi}\hat{\eta}^*\tau = \widehat{\Omega}^{-2}\tilde{\chi}(d\hat{\rho}^2 + \hat{k}_{\hat{\rho}}) = \widehat{\Omega}^{-2}\tilde{\chi}\hat{\chi}^{-1}\hat{\rho}^2\hat{h}_0.
  \end{equation*}
  This implies that $\tilde{h}_0 = \frac{\hat{\rho}^2\tilde{\chi}}{\tilde{\rho}^2\widehat{\Omega}^2\hat{\chi}}\hat{h}_0$.
  Since $\tilde{h}_0$ and $\hat{h}_0$ are both AH metrics on $S \times [0,\varepsilon)$, we must therefore have
  \begin{equation*}
    \left.\frac{\hat{\rho}}{\tilde{\rho}}\right|_{\hat{\rho} = 0} = \widehat{\Omega}|_{\hat{\rho} = 0}.
  \end{equation*}
  Now
  \begin{equation*}
    \hat{\rho}^2\tilde{h}_0 = \frac{\hat{\rho}^2}{\tilde{\rho}^2}\tilde{\rho}^2\tilde{h}_0 = \frac{\hat{\rho}^2}{\tilde{\rho}^2}\widehat{\Omega}^{-2}\tilde{\chi}(d\hat{\rho}^2 + \hat{k}_{\hat{\rho}}),
  \end{equation*}
  where $\left.\frac{\hat{\rho}^2}{\tilde{\rho}^2}\widehat{\Omega}^{-2}\tilde{\chi}\right|_{\hat{\rho} = 0} = 1$. Thus, by the uniqueness statement in Proposition \ref{auxprop},
  $\tilde{h}_{\theta} = \hat{h}_{\theta}$ mod $O(\hat{\rho}^n)$. Set $\tilde{g} = \csc^2(\theta)[d\theta^2 + \tilde{h}_{\theta}] =
  \hat{\eta}^*(\eta^{-1})^*g$. It then follows from the discussion in the first paragraph of this proof and the fact that $\eta|_{\rho = 0} = \hat{\eta}|_{\hat{\rho} = 0} = \id$ that
  \begin{align*}
    \mathcal{K}(\tau) &= \left\langle\rho^{2 - n}\tf_{k_0}E(g)|_{\rho = 0},w_0\right\rangle_{\sin^{-(n + 1)}(\theta)d\theta}\\
    &= \left\langle \tilde{\rho}^{2 - n}\tf_{\hat{k}_0}E(\tilde{g})|_{\rho = 0},w_0\right\rangle_{\sin^{-(n + 1)}(\theta)d\theta}\\
    &= \left\langle\tilde{\rho}^{2 - n}\tf_{\hat{k}_0}E(\hat{g})|_{\rho = 0},w_0\right\rangle_{\sin^{-(n + 1)}(\theta)d\theta}\\
    &= \left.\frac{\hat{\rho}^{n - 2}}{\tilde{\rho}^{n - 2}}\left\langle\hat{\rho}^{2 - n}\tf_{\hat{k}_0}E(\hat{g})|_{TS},w_0\right\rangle_{\sin^{-(n + 1)}(\theta)d\theta}\right|_{\rho = 0}\\
    &= \widehat{\Omega}|_{S}^{n - 2}\mathcal{K}(\hat{\tau})\\
    &= \Omega|_S^{n - 2}\mathcal{K}(\hat{\tau}),
  \end{align*}
  which is the desired result.

  We need finally to show that $\mathcal{K}$ is generically nontrivial. We will do this by showing that 
  \begin{equation}
    \label{Knovan}
    \mathcal{K}(\tau) = c\tf_{k_0}\partial_{\rho}^nk_{\rho}|_{\rho = 0} +
  \mathcal{K}'(\tau),
\end{equation}
where $c \neq 0$ and $\mathcal{K}'(\tau)$ depends only on $\partial_{\rho}^jk_{\rho}|_{\rho = 0}$ for $j < n$.
  To proceed, let $\kappa = \partial_{\rho}^nk_{\rho}|_{\rho = 0}$, and extend it to be a section in $C^{\infty}(\tS,S^2T^*S)$ by taking it to be constant in $\theta$.
  Define $\bar{\varphi} \in \mathcal{H}_n(\theta_0,S,S^2T^*S)$ by
  $\partial_{\rho}^n\brh_{st} = \kappa_{st} + \partial_{\rho}^n((\chi - 1)k_{\rho})|_{\rho = 0} + \bar{\varphi}$, where the second term, like $\kappa$, is extended to be constant in $\theta$.
  Notice that the second term depends only on $\partial_{\rho}^jk_{\rho}|_{\rho = 0}$ for $j < n$.
  Plainly, we have $\bar{\varphi}|_{\theta = 0} = 0$ and $\partial_{\theta}\bar{\varphi}|_{\theta = \theta_0} = 0$. It follows then that the inner product
  $\langle\rho^{2 - n}\tf_{k_0}E|_{\rho = 0},w_0\rangle$ is independent of $\bar{\varphi}$, by the discussion in the first paragraph of this proof.
  We wish to find the coefficient of $\mathring{\kappa}_{st}$
  in $\langle \rho^{2 - n}\tf_{k_0}E,w_0\rangle$, and in particular to show that it is nonvanishing.
  
  Consider now the last term of $E_{\mu\nu}$ in (\ref{emunu}), which is given by $u_{\mu\nu} = \rho^2\sin^2(\theta)\ric(\brh)_{\mu\nu}$.
  Recall that the expression for $\ric(\brh)_{\mu\nu}$ is
  \begin{equation}
    \label{rich}
    \begin{split}
    \ric(\brh)_{\mu\nu} = \frac{1}{2}\brh^{\eta\lambda}(\partial_{\mu\lambda}^2\brh_{\nu\eta} + \partial_{\nu\eta}^2\brh_{\mu\lambda} -& \partial_{\eta\lambda}^2\brh_{\mu\nu}
    - \partial_{\mu\nu}^2\brh_{\eta\lambda})\\
   &+ \brh^{\eta\lambda}\brh^{\sigma\tau}(\Gamma_{\mu\lambda\sigma}\Gamma_{\nu\eta\tau} - \Gamma_{\mu\nu\sigma}\Gamma_{\eta\lambda\tau})
    \end{split}
  \end{equation}
  Now because $\brh_{\theta}|_{\rho = 0} = d\rho^2 + k_{0}$, we see that the third term contributes to $u$ a term of the form
  $-\frac{1}{2(n - 2)!}\sin^2(\theta)\rho^{n}\partial_{\rho}^n\brh_{\mu\nu}$. No other term of (\ref{rich}) contributes a multiple of $\partial_{\rho}^n\brh_{\mu\nu}$ at order
  $\rho^n$. Thus, in particular, $u$ contributes a term of the form $-\frac{1}{2(n - 2)!}\sin^2(\theta)\rho^n\mathring{\kappa}_{st}$ (as well as terms involving $\mathring{\varphi}_{st}$
  and lower orders of $k_{\rho}$) to $\mathring{E}_{st}$.

  Consider next the second-last term of (\ref{emunu}), which is $\rho\sin^2(\theta)\nabla^{\eta}\rho_{\eta}\brh_{\mu\nu}$. Because of the factor of $\rho$,
  it does not make any contribution of the form $\rho^n\mathring{\kappa}_{st}$ to $\mathring{E}_{st}$.

  Next consider the third-last term, $v_{\mu\nu} = (n - 2)\rho\sin^2(\theta)\nabla_{\mu}\rho_{\nu}$. It is easy to compute that this takes the form
  \begin{equation*}
    v_{\mu\nu} = \frac{n - 2}{2}\sin^2(\theta)\rho(\partial_{\rho}\brh_{\mu\nu} - 2\partial_{(\mu}\brh_{\nu)n}) + O(\rho).
  \end{equation*}
  The third-last term thus contributes a term of the form $\frac{n - 2}{2(n - 1)!}\sin^2(\theta)\rho^n\mathring{\kappa}_{st}$ to $\mathring{E}_{st}$.

  We claim that no other term of (\ref{emunu}) contributes a term involving $\kappa$ to $E$ at order $\rho^n$. The first two terms do not, because
  $\kappa$ does not depend on $\theta$. The next three terms do not because $\partial_{\theta}\brh = O(\rho)$; and the next, because $|d\rho|_{\brh}^2 - 1 = O(\rho)$.
  Thus, the only contributions of $\mathring{\kappa}_{st}$ to $\mathring{E}_{st}$ are those already calculated from the seventh and ninth terms; their sum is
  $\frac{-1}{2(n - 1)!}\rho^n\sin^2(\theta)\mathring{\kappa}_{st}$. But it is plain that the coefficient of $\rho^n$ here is not orthogonal to $\csc(\theta)$; indeed,
  \begin{equation*}
    \begin{split}
      \frac{-1}{2(n - 1)!}\left\langle\sin^2(\theta)\mathring{\kappa}_{st},w_0\right\rangle_{\sin^{-(n + 1)}(\theta)d\theta} &= \frac{-1}{2(n - 1)!}\int_0^{\frac{\pi}{2}}\sin(\theta)\mathring{\kappa}_{st}d\theta\\
      &= \frac{-1}{2(n - 1)!}\mathring{\kappa}_{st}.
    \end{split}
  \end{equation*}

  Thus, (\ref{Knovan}) holds with $c = \frac{-1}{2(n - 1)!}$.
  Now $\tau$ (thus $k_{\rho}$) may be changed at order $\rho^n$ independently of any lower orders; and we have seen above that changing $\varphi$ does not alter the inner product
  $\langle \rho^{2 - n}\mathring{E}|_{\rho = 0},
  w_0\rangle_{\sin^{-(n + 1)}(\theta)d\theta}$.
  Thus, for generic choices of $\tf_{k_0}\partial_{\rho}^nk_{\rho}|_{\rho = 0} = \mathring{\kappa}$, $\mathcal{K}(\tau)$ is nonvanishing.
\end{proof}

\bibliographystyle{alpha}
\bibliography{einart}

\end{document}